\theoremstyle{plain}
\newtheorem{thm}{Theorem}
\newtheorem{main}{Main Theorem}
\newtheorem{lem}{Lemma}
\newtheorem{prop}{Proposition}
\newtheorem*{thm*}{Theorem}
\theoremstyle{definition}
\newtheorem*{ques*}{Question}
\newtheorem*{conv*}{Convention}
\newtheorem*{shoutouts}{Acknowledgements}
\newtheorem{rem}{Remark}
\newtheorem{defin}{Definition}
\newtheorem{cor}{Corollary}
\begin{document}

\title[Logarithmic laws and unique ergodicity]
      {Logarithmic laws and unique ergodicity}
\author{Jon Chaika}
\address{Department of Mathematics\\
         University of Utah\\
         Salt Lake City, UT}         \email{chaika@math.utah.edu}
\author{Rodrigo Trevi\~no}
\address{Brooklyn College, City University of New York \\
         Brooklyn, NY}
\email{rodrigo@trevino.cat}
\date{\today}

\begin{abstract}
We show that Masur's logarithmic law of geodesics in the moduli space of translation surfaces does not imply unique ergodicity of the translation flow, but that a similar law involving the flat systole of a Teichm\"{u}ller geodesic does imply unique ergodicity. It shows that the flat geometry has a better control on ergodic properties of translation flow than hyperbolic geometry.
\end{abstract}
\maketitle
Let $X$ be a compact Riemann surface of genus $g>1$ and $\omega$ a holomorphic 1-form on $X$ and denote by $\Sigma\subset X$ the set of zeros of $\omega$. The pair $(X,\omega)$ defines a \textbf{flat surface} since there exists a maximal atlas, defined by $\omega$, where transition functions on $X\backslash \Sigma$ are of the form $\varphi_{a,b}:z\mapsto  z + c_{a,b}$, for some $c_{a,b}\in\mathbb{C}$. This in turn allows $(X,\omega)$ to have a flat metric at every point of $X\backslash\Sigma$. We will always assume that we are working with flat surfaces of area 1. The set $\Sigma$ consists of the \textbf{singularities} of $(X,\omega)$. Every point in the singularity set $\Sigma$ is not flat: there are charts centered at these points where, in polar coordinates, give them angles of the form $2\pi(k+1)$ for some $k\in\mathbb{N}$ (which is the order of the zero).

A flat surface $(X,\omega)$ is a foliated space: since $\omega$ is holomorphic, the distributions $\ker \Re (\omega)$ and $\ker \Im (\omega)$ define two foliations, the \textbf{vertical and horizontal foliations}, respectively, which are singular at the points of $\Sigma$. The unit-time parametrization of these foliations defines the \textbf{vertical and horizontal flows}, respectively, which preserve the canonical Lebesgue measure $\mu_\omega$ coming from the flat metric. We denote them, respectively, by $\varphi^v_t$ and $\varphi_t^h$. Whenever we refer to the translation flow we will always mean the vertical flow.

A \textbf{saddle connection} $\gamma$ is defined to be a straight segment on $(X,\omega)$ whose endpoints are contained in $\Sigma$. Let $\Gamma = \gamma_1\cup \cdots \cup \gamma_n$ be a union of saddle connections of $(X,\omega)$. Then we denote by $|\Gamma| = \sum_i |\gamma_i|$, where $|\gamma|$ denotes the length, measured with respect to the flat metric on $(X,\omega)$, of $\gamma$. The \textbf{vertical} and \textbf{horizontal components}, respectively, for a finite union of saddle connections $\Gamma = \bigcup_i \gamma_i$ are defined as
$$v(\Gamma) =\sum_i v(\gamma_i) = \sum_i \left|  \int_{\gamma_i} \Im(\omega)   \right|  \hspace{.5in}\mbox{ and }\hspace{.5in} h(\Gamma) =\sum_i h(\gamma_i) = \sum_i \left|  \int_{\gamma_i} \Re(\omega)   \right|.$$

For any flat surface $(X,\omega)$, there exists a (not necessarily unique) collection of saddle connections $\Gamma(\omega)$ with the property that $|\Gamma(\omega)|$ is minimized among all other collections of saddle connections $\Gamma'$ which represent a non-contractible closed-curve. Although the collection $\Gamma(\omega)$ may not be unique, its length $\delta(\omega) := |\Gamma(\omega)|$ is uniquely defined, and it is called the \textbf{systole} of $(X,\omega)$.

Every flat surface $(X,\omega)$ belongs to a \textbf{moduli space} of flat surfaces of genus $g$, $\mathcal{A}_g$. 
 The space $\mathcal{A}_g$ is a non-compact, finite dimensional orbifold and it is stratified by different possible singularity patterns of $\Sigma$. In other words, it is stratified by the number of zeros of $\omega$ and the corresponding orders of the zeros, and different strata are denoted by $\mathcal{A}_g(\bar{k})$, where $\bar{k}$ denotes the singularity pattern associated to the stratum. We denote by $\mathcal{A}_g^{(1)}$ the set of all flat surfaces of genus $g$ and unit area. The space $\mathcal{A}_g^{(1)}$ is exhausted by compact sets of the form 
\begin{equation}
\label{eqn:Mahler}
K_g(\varepsilon) = \{(X,\omega)\in \mathcal{A}_g^{(1)} : X\mbox{ is a Riemann surface of genus $g$ and }\delta(\omega)\geq \varepsilon\}.
\end{equation}

The group $SL(2,\mathbb{R})$ parametrizes a group of deformations of the flat structure of $(X,\omega)$. More precisely, for any $A\in SL(2,\mathbb{R})$, we obtain a new flat surface $A\cdot(X,\omega) = (A \cdot X,A\cdot\omega)$ from $(X,\omega)$ by post-composing the charts of $(X,\omega)$ with $A$. This operation induces a group action of $SL(2,\mathbb{R})$. The moduli space $\mathcal{A}_g^{(1)}$ is equipped with an absolutely continuous, $SL(2,\mathbb{R})$-invariant, probability measure $\mu_g$ \cite{masur1,veech:teich}. The diagonal subgroup $g_t = \mbox{diag}(e^t,e^{-t})$ induces a 1-parameter family of deformations called the \textbf{Teichm\"{u}ller flow}, which also preserves the measure $\mu_g$. Denote by $r_\theta$ an element of the rotation subgroup $SO(2,\mathbb{R})$ and $r_\theta(X,\omega) = (X,\omega_\theta)$. We denote by $\Gamma_t(\omega) := \Gamma(g_t \omega)$ and $\delta_t(\omega) := |\Gamma_t(\omega)| = |\Gamma(g_t \omega)|$. 
Since closed curves can be separating and non-separating, we denote by $\delta^s_t(\omega)$ and $\delta_t^{\not s}(\omega)$, the length of the shortest separating and non-separative curves, respectively, on $g_t(X,\omega)$. As such, we have that $\delta_t(\omega)\in\{\delta_t^s(\omega),\delta_t^{\not s}(\omega)\}$.

One of the most basic questions about translation flows are those concerning the ergodic properties of the flow. The first general results were discovered by Masur which are usually labeled as ``Masur's Criterion''. 
\begin{thm*}[Masur's Criterion \cite{masur1, masur2}]
If the Teichm\"{u}ller orbit $g_t (X,\omega)$ of $(X,\omega)$ does not leave every compact set in moduli space, then the translation flow is uniquely ergodic.
\end{thm*}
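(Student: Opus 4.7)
My plan is to establish the contrapositive: assuming the vertical flow $\varphi^v_t$ on $(X,\omega)$ is not uniquely ergodic, I will show that for every $\varepsilon>0$ the orbit $g_t(X,\omega)$ eventually leaves $K_g(\varepsilon)$. Fix a horizontal segment $I \subset X\setminus \Sigma$ transverse to $\varphi^v_t$; the first-return map $T\colon I \to I$ is an interval exchange transformation on some alphabet $\mathcal A$, and unique ergodicity of $T$ is equivalent to unique ergodicity of $\varphi^v_t$ since the return-time function is integrable. If $\varphi^v_t$ is not uniquely ergodic, then $T$ admits two mutually singular invariant probability measures, producing two linearly independent vectors $\lambda_1,\lambda_2$ in the positive cone $C(T)\subset \mathbb{R}^{\mathcal A}_{\geq 0}$ of invariant length vectors for $T$.

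Next I would encode the Teichm\"uller orbit via Rauzy--Veech induction. Successive induction steps replace $T$ by induced IETs $T^{(k)}$ on subintervals $I^{(k)}\subset I$; rescaling $I^{(k)}$ to unit horizontal length reproduces $g_{s_k}(X,\omega)$ for an explicit sequence $s_k \to \infty$ depending on $T$. Each step corresponds to multiplication by a non-negative integer Rauzy--Veech matrix, and $C(T)$ pulls back under the composed matrix to the invariant cone $C(T^{(k)})$. The distinct rays spanned by $\lambda_1$ and $\lambda_2$ persist through every induction step.

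Now I would exploit the hypothesis. Assume toward contradiction that $g_t(X,\omega)$ returns to some $K_g(\varepsilon)$ along a subsequence $t_n\to \infty$, and select induction times $s_{k_n}$ adjacent to $t_n$. The uniform bound $\delta_{t_n}(\omega)\geq \varepsilon$ translates, via the standard dictionary between saddle connections of $g_{t_n}\omega$ and Rauzy--Veech combinatorics, into a \emph{balance} statement for the partial matrix $N_n$ accumulated between induction steps $k_n$ and $k_{n+1}$: the entries of $N_n$ are strictly positive and their pairwise ratios are bounded by a constant $c(\varepsilon)$. By Birkhoff's theorem on the Hilbert projective metric, a balanced positive matrix is a strict projective contraction of the positive cone with contraction coefficient bounded away from $1$ in terms of $c(\varepsilon)$ alone. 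Iterating along the subsequence collapses $C(T^{(k_n)})$ projectively onto a single ray, contradicting the persistence of the two distinct rays spanned by $\lambda_1,\lambda_2$.

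The main obstacle is the geometric-combinatorial translation in the last paragraph: converting the statement ``systole of $g_{t_n}\omega$ is at least $\varepsilon$'' into uniform positivity and balance of $N_n$. This requires a careful use of the zippered-rectangles picture, in which rows of $N_n$ correspond to certain saddle connections of $g_{t_n}\omega$ whose horizontal and vertical extents are controlled by $\delta_{t_n}(\omega)$ and the diameter of the compact set $K_g(\varepsilon)$. Once this dictionary is in place, the cone-contraction step is a standard application of Birkhoff's theorem, so the delicate point is genuinely the passage from the geometric compactness hypothesis to a combinatorial positivity estimate on the induction matrices.
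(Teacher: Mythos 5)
The paper does not prove Masur's Criterion --- it cites it as a known theorem from \cite{masur1,masur2} and uses it as a black box --- so there is no in-paper proof to compare against. Your proposed route via Rauzy--Veech induction and Birkhoff's projective contraction is a legitimate and well-known way to establish the criterion; it is essentially the IET-theoretic treatment (in the spirit of Veech and later expositions), as opposed to Masur's more directly geometric original argument.

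There is, however, a genuine gap in your crucial step. You assert that the Rauzy--Veech matrix $N_n$ accumulated between the induction times adjacent to $t_n$ and $t_{n+1}$ is strictly positive with entry ratios bounded by a constant $c(\varepsilon)$. That cannot be right: the hypothesis gives only recurrence to $K_g(\varepsilon)$ along the sequence $t_n$, so between two successive recurrence times the orbit may plunge arbitrarily deep into the cusp, and the corresponding partial matrix can then be as unbalanced as you like. (Already on the torus, a partial-quotient sequence with rare but unbounded spikes produces a geodesic recurrent to the thick part of the modular surface whose inter-recurrence continued-fraction blocks are arbitrarily lopsided.) The correct form of the claim is \emph{local to each recurrence time}: compactness of $K_g(\varepsilon)$, together with minimality of the vertical flow (no vertical saddle connections), yields a uniform $N(\varepsilon)$ such that starting from the zippered-rectangle configuration at time $t_n$, after at most $N(\varepsilon)$ Rauzy--Veech steps the accumulated matrix is positive with all entries at most $N(\varepsilon)$. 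Passing to a sparse enough subsequence of recurrence times makes these short blocks disjoint, and \emph{those} bounded positive matrices, not the full inter-recurrence products $N_n$, are what furnish the uniform Birkhoff contraction of the projective cone. You rightly flag this translation as the delicate point, but as written the balance statement would fail, so that step needs to be reformulated before the contraction argument can be run.
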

Masur's results were in turn used in \cite{KMS} to show that the translation flow for \emph{any} flat surface is uniquely ergodic in almost every direction (a slightly more restricted criterion was proven in \cite{masur1}, which is the one which was used in \cite{KMS}). Masur's criterion relies on the rich interplay between the deformation of a flat surface using the Teichm\"{u}ller deformation and its orbit in the moduli space. Note that by (\ref{eqn:Mahler}), the condition of a surface being ``far away'' in moduli space is directly linked to one of its geometric quantities, namely its systole. More general and quantitative criteria for unique ergodicity have been developed \cite{CheungEskin, rodrigo:area1} in the same spirit as Masur's. In this paper we will focus on the following.

\begin{thm}[\cite{rodrigo:area1}]
\label{thm:UE}
If
\begin{equation}
\label{eqn:divergence}
\int_0^\infty \delta_t^2(\omega)\, dt = +\infty
\end{equation}
then the vertical flow on $(X,\omega)$ is uniquely ergodic.
\end{thm}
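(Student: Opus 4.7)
The plan is to argue by contradiction via a quantitative refinement of Masur's criterion in which the qualitative condition ``$g_t\omega$ returns to a compact set'' is replaced by the integral hypothesis \eqref{eqn:divergence}.

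Suppose the vertical flow on $(X,\omega)$ is not uniquely ergodic. Then there exist two distinct ergodic invariant probability measures; their difference defines a non-trivial signed transverse invariant measure for the horizontal foliation, which in turn gives a non-zero class $\eta\in H^1(X,\Sigma;\mathbb{R})$ whose pairing with every vertical saddle connection of $(X,\omega)$ vanishes. The goal is to show that \eqref{eqn:divergence} forces $\eta=0$.

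The strategy is to track the horizontal $L^2$-mass of $\eta$ along the Teichm\"uller geodesic using a Hodge-type norm $\|\cdot\|_t$ on $H^1(X,\Sigma;\mathbb{R})$ adapted to $g_t\omega$. The central analytic step is an energy-dissipation inequality of the form
\[
\|\eta\|_0^2\ \ge\ c\int_0^\infty \delta_t^2(\omega)\,dt,
\]
for some constant $c>0$ depending only on the stratum. This is derived by integrating, over $t$, a flat-geometric decrement estimate: on a rectangle decomposition of $g_t\omega$ whose short sides are controlled by $\Gamma_t(\omega)$, the vanishing of $\eta$ on vertical saddle connections of $(X,\omega)$ forces $\|\eta\|_t^2$ to decrease by at least $c\,\delta_t^2(\omega)\,dt$ per unit time. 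The quadratic dependence on $\delta_t$ reflects the two-dimensional area scale of rectangles whose widths are proportional to $\delta_t$. Since $\eta\ne 0$, the displayed inequality combined with \eqref{eqn:divergence} is a contradiction, forcing $\eta=0$ and establishing unique ergodicity of the vertical flow.

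The main obstacle is the energy-dissipation inequality itself. Establishing it requires a careful choice of flat rectangle decomposition of $g_t\omega$ whose short sides are controlled by $\Gamma_t(\omega)$, together with a Cauchy--Schwarz argument showing that the systolic saddle connections capture a definite portion of the $L^2$-mass of $\eta$. The subtle point is to rule out cohomological degeneracies in which $\Gamma_t(\omega)$ is nearly orthogonal to $\eta$ for long intervals; this is where the systolic property of $\Gamma_t$ (not merely its shortness) becomes essential, and the quadratic weight $\delta_t^2$ emerges from comparing one-dimensional lengths of systolic saddle connections with two-dimensional $L^2$-energies on rectangles.
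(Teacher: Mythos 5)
This theorem is not proved in the paper under review: it is imported from \cite{rodrigo:area1} and used as a black box (via Proposition \ref{prop:gap}). There is therefore no in-paper argument to compare your proposal against; one can only measure it against Trevi\~no's original proof in the cited reference.

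Your outline is in the right family: passing from non-unique ergodicity to a nonzero class $\eta$ in $H^1$ and tracking a Hodge-type norm of $\eta$ along the Teichm\"uller geodesic is indeed the strategy of \cite{rodrigo:area1}, which builds on Forni's variational formula for the Hodge norm of the Kontsevich--Zorich cocycle. However, the only non-formal step in your sketch --- the ``energy-dissipation inequality'' $\|\eta\|_0^2 \geq c\int_0^\infty \delta_t^2\,dt$ --- is asserted rather than derived, and deriving such an estimate is the entire content of the theorem. I also doubt it holds in that additive form. The estimate that actually comes out of Forni's variational formula is multiplicative: a differential inequality for $\tfrac{d}{dt}\log\|g_t\eta\|_t$ whose coefficient is pinned away from the extremal value $1$ by a quantity controlled by $\delta_t(\omega)^2$, and integrating it produces a bound on the ratio $\|g_t\eta\|_t/\|\eta\|_0$, not a bound on the initial norm in terms of the integral. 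To convert decay of $\|g_t\eta\|_t$ into a contradiction you still need the structural input that the class associated to a difference of invariant measures cannot live in the strictly contracted part of the cocycle; your sketch omits this. The two ``subtle points'' you flag at the end --- constructing a rectangle decomposition adapted to $\Gamma_t(\omega)$ and ruling out near-orthogonality of $\eta$ to the systolic saddle connections --- are genuine difficulties, but as written you have named them without indicating how to overcome them, so a referee would read the central inequality as begging the question.
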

In line with the spirit of Masur's criterion, Theorem \ref{thm:UE} says that if the (flat) geometry of a surface evolving under the Teichm\"uller deformation does not degenerate very quickly, then the translation flow is uniquely ergodic. 

In another work, Masur proved \cite{Masur:loglaw} that the behavior of a typical surface in moduli space obeys a certain ``logarithmic law'' \emph{\`{a} la} Sullivan \cite{sullivan}. To state it more precisely, let $\mbox{dist}(\omega,g_t\omega)$ be the Teichm\"uller distance in the moduli space between the surface carrying an Abelian differential $\omega$ and the one carrying $g_t\omega$, which is its orbit under the Teichm\"{u}ller flow. 
\begin{thm*}[Logarithmic Law for Geodesics in Moduli Space \cite{Masur:loglaw}]
For any flat surface $(X,\omega)$, for almost every $\theta\in S^1$,
\begin{equation}
\label{eqn:loglaw}
\limsup_{t\rightarrow \infty}\frac{\mathrm{dist}(\omega_\theta,g_t\omega_\theta)}{\log t} = \frac{1}{2}.
\end{equation}
\end{thm*}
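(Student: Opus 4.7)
The plan is to reduce the claim to one about the systole of $g_t\omega_\theta$ and then run two Borel-Cantelli arguments. The Mahler-type exhaustion (\ref{eqn:Mahler}) identifies the cusps of $\mathcal{M}_g^{(1)}$ with small-systole surfaces, and because the Teichm\"uller flow dilates or contracts any saddle connection by at most a factor of $e^{|t|}$, the Teichm\"uller distance from a fixed thick compact set $K_g(\varepsilon_0)$ to a surface $\omega'$ in the cusp is $-\log\delta(\omega') + O(1)$. Consequently, for any basepoint $\omega$ of bounded distance to $K_g(\varepsilon_0)$,
\begin{equation*}
\mathrm{dist}(\omega, g_t\omega_\theta) = -\log\delta(g_t\omega_\theta) + O(1),
\end{equation*}
and the claim reduces to showing
\begin{equation*}
\limsup_{t\to\infty}\frac{-\log\delta(g_t\omega_\theta)}{\log t} = \tfrac{1}{2} \quad\text{for Lebesgue-a.e.\ }\theta \in S^1.
\end{equation*}

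For the upper bound ``$\leq \tfrac{1}{2}$'' I would first establish the volume estimate
\begin{equation*}
\mu_g\bigl(\{\omega' \in \mathcal{M}_g^{(1)} : \delta(\omega') < \eta\}\bigr) \asymp \eta^2,
\end{equation*}
which follows from the fact that a single short saddle connection cuts out a two-real-dimensional slice of the stratum parametrized by its complex holonomy in a disk of radius $\eta$. A Fubini-type comparison transferring this to the pushforward of Lebesgue measure on $S^1$ under $\theta \mapsto g_t\omega_\theta$ would then give, for each integer $n$,
\begin{equation*}
\Leb\bigl(\{\theta : \delta(g_n\omega_\theta) < n^{-(1/2+\epsilon)}\}\bigr) \lesssim n^{-(1+2\epsilon)},
\end{equation*}
which is summable. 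The first Borel-Cantelli lemma eliminates the bad events at integer times, and passage to continuous $t$ is absorbed in the constant since $\delta$ changes by at most a factor of $e$ over unit time.

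For the lower bound ``$\geq \tfrac{1}{2}$'' the same volume estimate gives a matching lower bound $\Leb\bigl(\{\theta : \delta(g_n\omega_\theta) < n^{-(1/2-\epsilon)}\}\bigr) \gtrsim n^{-(1-2\epsilon)}$, whose divergent sum I would convert to almost sure infinite recurrence by a converse Borel-Cantelli argument. The required quasi-independence across times would come from mixing of the Teichm\"uller flow for $\mu_g$, applied after the Fubini transfer and restricted to a sufficiently sparse subsequence of $n$'s so that the correlation between the corresponding thin-part events becomes summable.

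The main obstacle is the equidistribution step transferring estimates from $\mu_g$ to the circle measure pushed forward by $g_t r_\theta$, starting from an \emph{arbitrary} (non-generic) surface $\omega$. This requires a quantitative non-divergence estimate in the spirit of Margulis and Eskin-Masur, bounding the measure of $\theta$ for which $g_t \omega_\theta$ enters a neighborhood of the cusp, uniformly in the starting point. Matching the constant $\tfrac{1}{2}$ exactly, rather than merely showing finiteness of the limsup, is what forces the volume estimate to have exponent precisely $2$, reflecting the real dimension of the holonomy disk of a single short saddle connection.
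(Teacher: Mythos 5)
The paper does not prove this theorem; it cites Masur's original paper \cite{Masur:loglaw}, where the systole log law (\ref{eqn:loglaw3}) is established (upper bound in Masur's Proposition 1.2, lower bound in his Section 2), and then the distance log law (\ref{eqn:loglaw}) is deduced. So there is no internal proof to compare against, but your sketch contains a genuine gap worth flagging.

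Your reduction hinges on the two-sided claim $\mathrm{dist}(\omega, g_t\omega_\theta) = -\log\delta(g_t\omega_\theta) + O(1)$, but only the inequality $\mathrm{dist}(\omega, g_t\omega) \leq -\log\delta_t(\omega) + C$ (equation (\ref{eqn:distanceBound})) is correct. The reverse inequality fails: when the systole is achieved by a \emph{separating} curve $\gamma$, the curve is not a cylinder core and lives in an expanding annulus whose modulus grows only like $\log(1/\delta)$, so $\mathrm{Ext}(\gamma) \gtrsim 1/\log(1/\delta)$; Kerckhoff's formula (\ref{eqn:kerckhoff}) then gives a distance contribution of order $\tfrac{1}{2}\log\log(1/\delta)$, which is exponentially smaller than $-\log\delta$. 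Corollary \ref{cor:KerckhoffDist} makes this precise in genus two, and in fact the entire content of Main Theorem \ref{thm:negative} is to \emph{exploit} this gap: the constructed surface has the systole degenerating like $t^{-1}$ through separating curves while $\mathrm{dist}(\omega, g_t\omega)/\log t$ stays $\leq 1/2$. If your reduction were valid, it would directly contradict that construction. To repair the lower bound ``$\geq 1/2$'' you would have to show additionally that for almost every $\theta$, the deep cusp excursions are achieved by \emph{non-separating} curves (equivalently, core curves of large-modulus cylinders), for which $\mathrm{Ext}(\gamma) \asymp \delta^2$ and the two-sided comparison does hold. This is the substantive extra step in Masur's argument that your sketch elides.

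A secondary point: Masur's proof of the systole log law does not go through volume estimates on $(\mathcal{M}_g^{(1)},\mu_g)$ plus equidistribution of circles, which, as you yourself note, would require a quantitative non-divergence estimate uniform in the base point. Instead he works directly on the fixed surface $(X,\omega)$, using quadratic growth of the number of saddle connections of length $\leq L$ and summing over saddle connections the measure of the arc of angles $\theta$ for which a given $\gamma$ becomes short under $g_t r_\theta$. That argument gives $\mathrm{Leb}\{\theta : \delta(g_t\omega_\theta) < \epsilon\} \lesssim \epsilon^2$ without any recourse to the invariant measure, and similarly for the divergent series. Your route is not unreasonable, but it substitutes a harder global input (non-divergence à la Eskin--Masur) for Masur's elementary pointwise counting, and as written it still leaves the non-divergence step as an acknowledged gap.
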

In doing this he showed:
\begin{thm*}\cite{Masur:loglaw}
For any flat surface $(X,\omega)$, for almost every $\theta\in S^1$,
\begin{equation}
\label{eqn:loglaw3}
\limsup_{t\rightarrow\infty}\frac{-\log(\delta_t( \omega_\theta))}{\log t} = \frac{1}{2}.
\end{equation}
\end{thm*}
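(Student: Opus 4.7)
The plan is to establish both inequalities in \eqref{eqn:loglaw3} via Borel--Cantelli arguments applied at a discrete sequence of times, the crucial geometric input being the thin-part measure estimate
\[
\mu_g\bigl(\{\omega' \in \mathcal{M}_g^{(1)} : \delta(\omega') \le \epsilon\}\bigr) \asymp \epsilon^2
\]
as $\epsilon \to 0$, due to Masur--Smillie. This quadratic scaling reflects that having a short saddle connection is, to leading order, a codimension-two condition in moduli space---one real codimension each for the vertical and horizontal component of the short saddle connection's holonomy.

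For the upper bound $\limsup \le 1/2$, fix $\epsilon > 0$ and let $E_n = \{\omega' : \delta(\omega') \le n^{-1/2 - \epsilon}\}$, so that $\mu_g(E_n) \lesssim n^{-1-2\epsilon}$ is summable. Rather than applying Borel--Cantelli in the form of $g_t$-invariance of $\mu_g$ (which would yield only a $\mu_g$-almost everywhere statement), I would estimate the Lebesgue measure of bad directions directly:
\[
\bigl|\{\theta \in S^1 : \delta_n(\omega_\theta) \le n^{-1/2-\epsilon}\}\bigr| \lesssim \mu_g(E_n),
\]
using a non-divergence / equidistribution estimate for circle arcs pushed by $g_t$: the expanded arc $g_n\{r_\theta \omega : \theta \in S^1\}$ distributes asymptotically like $\mu_g$ away from the thinnest part of moduli space. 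Summability then yields $\delta_n(\omega_\theta) \ge n^{-1/2-\epsilon}$ for all but finitely many $n$ for a.e.\ $\theta$, and the fact that $\log \delta_t$ is $1$-Lipschitz in $t$ along Teichm\"uller orbits interpolates this to a bound on the whole $t$-axis.

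For the lower bound $\limsup \ge 1/2$ the analogous bad sets have Lebesgue measure $\asymp n^{-1+2\epsilon}$, whose sum diverges, so I would invoke the divergence (Chung--Erd\H{o}s) form of Borel--Cantelli. Its hypothesis is a quasi-independence estimate on pairs of bad events, which follows from decay of correlations for the Teichm\"uller flow: once $n - m$ is large, the events ``$\delta_m(\omega_\theta)$ small'' and ``$\delta_n(\omega_\theta)$ small'' are approximately independent, and the second moment of the counting function is of the same order as the square of its first moment.

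The principal obstacle is making the transfer from ``$\mu_g$-almost every surface'' to ``every surface, a.e.\ direction'' effective: because a circle $\{r_\theta \omega\}$ has $\mu_g$-measure zero, Fubini is unavailable, and the equidistribution of $g_t$-pushed circle arcs must be established with explicit error terms that beat the $\epsilon^2$ scale of the thin part. Securing this uniformly over all base surfaces $\omega$---including non-generic ones such as Veech surfaces---is the most technical step of the plan.
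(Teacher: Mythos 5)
Your plan is genuinely different from Masur's and, as written, has a gap that is not merely technical but structural. Masur's own argument (in \cite{Masur:loglaw}) works directly on the fixed surface $(X,\omega)$: the essential geometric input is the quadratic growth of the number of saddle connections of bounded length, not the thin-part measure estimate $\mu_g(\delta \le \epsilon) \asymp \epsilon^2$. For each saddle connection $\gamma$ of length roughly $R$ on $(X,\omega)$, Masur estimates directly the Lebesgue measure of directions $\theta$ for which $g_t r_\theta \gamma$ is shorter than $\epsilon$, then sums over $\gamma$ using the quadratic count; this gives a Borel--Cantelli argument over $\theta$ that is valid for \emph{every} $(X,\omega)$, with no reference to the pushforward of circle arcs or to $\mu_g$. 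The lower bound is handled similarly in Section 2 of his paper. Neither quantitative equidistribution of expanding circles (Eskin--Masur, Athreya--Forni, Avila--Gou\"ezel) nor exponential mixing of Teichm\"uller flow was available when that theorem was proved, and Masur does not use them.

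The gap in your proposal is the implicit assumption that the pushforward $g_n \{ r_\theta \omega : \theta \in S^1\}$ equidistributes toward $\mu_g$ (or at least that it can be compared to $\mu_g$ with error beating $n^{-1-2\epsilon}$). For a lattice (Veech) surface the $SL(2,\mathbb{R})$-orbit is closed, and the pushed arc equidistributes toward the finite invariant measure supported on that closed orbit, not toward $\mu_g$; the same phenomenon occurs for any surface whose orbit closure is a proper affine submanifold (Eskin--Mirzakhani--Mohammadi). So the bound $|\{\theta : \delta_n(\omega_\theta)\le n^{-1/2-\epsilon}\}| \lesssim \mu_g(E_n)$ is simply the wrong inequality to aim for in these cases: the measure of the thin part of the relevant affine submanifold has no a priori reason to obey the quadratic scaling. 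You flag this as ``the most technical step,'' but it is better viewed as a sign that equidistribution with respect to $\mu_g$ is not the right mechanism here. Likewise, for the lower bound, quasi-independence of the events $\{\delta_m(\omega_\theta) \text{ small}\}$ and $\{\delta_n(\omega_\theta) \text{ small}\}$ along a single $\theta$-circle does not follow from decay of correlations of $g_t$ with respect to $\mu_g$, for the same reason. The fix is to replace the thin-part/equidistribution input by the pointwise quadratic saddle-connection count, which is uniform over all flat surfaces and feeds directly into a direction-by-direction Borel--Cantelli argument as Masur does.
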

We note that the theorem as we stated here is not stated explicitly in \cite{Masur:loglaw}, but its proof can be easily retrieved from the proof of (\ref{eqn:loglaw}): he proved the upper bound in Proposition 1.2 and the lower bound in Section 2.

There is a relationship between the Teichm\"uller distance in moduli space $\mathrm{dist}(\omega,g_t\omega)$ and the systole $\delta_t(\omega)$. This relationship in fact involves the lengths of separating and non-separating shortest curves, $\delta_t^s(\omega)$ and $\delta_t^{\not s}(\omega)$, respectively. It is known (see \S \ref{sec:geometry}) that there is a constant $C$, which depends only on the topology of the surface, such that
\begin{equation}
\label{eqn:distanceBound}
\mathrm{dist}(\omega,g_t\omega) \leq   \max\left\{ \frac{1}{2}\log( -\log (\delta^s_t(\omega))), - \log (\delta_t^{\not s}(\omega))\right\} + C.
\end{equation}


Whenever we say that a flat surface $(X,\omega)$ satisfies Masur's logarithmic law, we mean that
\begin{equation}
\label{eqn:loglaw2}
 \limsup_{t\rightarrow \infty}\frac{\mbox{dist}(\omega,g_t\omega)}{\log t} = \frac{1}{2}. 
\end{equation}
Given the criterion for unique ergodicity (\ref{eqn:divergence}), Masur's logarithmic laws (\ref{eqn:loglaw}) and (\ref{eqn:loglaw3}), and the bound (\ref{eqn:distanceBound}) we may wonder whether there is any relationship between logarithmic laws and unique ergodicity. The purpose of this paper is to give some answers to these questions.

To set up our first result, we first define $\phi_\eta(t) := t^{-\frac{1}{2}}(\log t)^{-(\frac{1}{2}+\eta)}$. What would happen if $\mbox{dist}(\omega,g_t\omega) = \phi_\eta(t)$ or $\delta_t(\omega) = \phi_\eta(t)$ for some $\omega$? As such, we have that $\int_1^\infty \phi_0(t)^2\, dt = \infty$ while $\int_1^\infty \phi_\varepsilon(t)^2\, dt < \infty$ for any $\varepsilon > 0$. In either case, both satisfy $\frac{-\log \phi_\eta(t)}{\log t}\rightarrow \frac{1}{2}$ for any $\eta$. As such, it may seem at first that obeying a logarithmic law may not be sufficient for unique ergodicity through the diverging integral (\ref{eqn:divergence}). The first result in this paper is to show that a surface which satisfies a logarithmic law (\ref{eqn:loglaw3}) involving the (flat) systole, that is, involving the quantity $-\log \delta_t(\omega)$ has a uniquely ergodic vertical flow.
\begin{main}
\label{thm:main}
The logarithmic law 
\begin{equation}
\label{eqn:SystLogLaw}
\limsup_{t\rightarrow \infty}\frac{ -\log\, \delta_t(\omega)}{\log t} = \frac{1}{2}
\end{equation}
implies unique ergodicity.

\end{main}
Indeed, we show that if a flat surface $(X,\omega)$ satisfies the logarithmic law (\ref{eqn:SystLogLaw}) then it satisfies the non-integrability condition in (\ref{eqn:divergence}) and thus that the vertical flow is uniquely ergodic. Note that this gives an alternate proof of the theorem of Kerckhoff, Masur and Smillie \cite[Theorem 1]{KMS} that almost every direction on a translation surface has a uniquely ergodic flow. 

However, Masur's logarithmic law (\ref{eqn:loglaw2}) does not imply unique ergodicity. The following result answers a question raised in \cite{rodrigo:area1}.
\begin{main}
\label{thm:negative}
Masur's logarithmic law does not imply unique ergodicity: there exists a flat surface $(X,\omega)$ of genus 2 such that
\begin{equation}
\label{eqn:inequality}
\limsup_{t\rightarrow\infty}\frac{\mathrm{dist}(\omega,g_t\omega)}{\log t} \leq  \frac{1}{2}
\end{equation}
and the vertical flow is not ergodic.
\end{main}
What explains the distinction between the main theorems in light of (\ref{eqn:distanceBound})? The short answer is that given a theorem of Kerckhoff (stated as Theorem \ref{thm:kerckhoff} in \S \ref{sec:geometry}), the distance in moduli space is measured by the extremal length of homotopically non-trivial closed curves on a surface, and not by the flat length of these curves, which is the type of length considered for the systole. More specifically, as a surface degenerates, that is, as the Teichm\"uller orbit of a surface leaves compact sets of moduli space, the flat length becomes a worse reference for the extremal length, and thus for telling distances in moduli space. As such, \textbf{the flat geometry controls unique ergodicity}. To prove the Main Theorem \ref{thm:negative} we devise a construction (following \cite{kat:erg}, \cite{sat:erg} and \cite{veech:strict}) to take advantage of this observation to construct a surface which does not diverge in moduli space too quickly but has systoles which degenerate quickly enough to make (\ref{eqn:divergence}) fail and even to have a non-ergodic translation flow. We do this in Sections \ref{sec:geometry} and \ref{sec:example}.

Our approach to prove the Main Theorem \ref{thm:main}, is to show that if a surface satisfies the logarithm law (\ref{eqn:SystLogLaw}) then there exists a $C_g>0$ depending only on genus so that for a set of $t$ of positive lower density we have that
\begin{equation}
\label{eq:key estimate} 
\frac{-\log\delta_{t}(\omega)}{\log(t)} \leq \frac{1}{2} - C_g.
\end{equation} 
This allows us to show that the criterion for unique ergodicity in \cite{rodrigo:area1} (Theorem \ref{thm:UE} above) is satisfied. To prove (\ref{eq:key estimate}) we assume that there exists an $\epsilon>0$ and a large interval so that 
\begin{equation}\label{eq:clump}\frac{-\log \delta_t(\omega) }{\log(t)}\in \left[\frac 1 2-\epsilon,\frac 1 2 +\epsilon\right]
\end{equation} for all $t$ in this interval. During this long stretch of $t$, many different curves become the shortest curves at different times. We use the technique of combining complexes due to Kerckhoff, Masur and Smillie \cite{KMS} to build subcomplexes of our surface from saddle connections which make up these the simple closed curves which have the shortest length at different times in this stretch of $t$. Our Proposition \ref{prop:baby} shows that, in the presence of (\ref{eq:clump}), curves that become the shortest at later times have to cross the boundary of complexes which we had already considered. This allows us to combine curves which become the shortest with the complexes we considered earlier.  Doing this enough times, we eventually accumulate enough saddle connections to triangulate the surface. However, our argument is effective and in the presence of (\ref{eq:clump}) we do this while all the curves are simultaneously short and so our complex has small area. This is a contradiction and allows us to prove that (\ref{eq:clump}) cannot be satisfied for too long stretches of time (this is the content of Proposition \ref{prop:contr}). Therefore in the presence of (\ref{eqn:SystLogLaw}), for some $t$ in any long enough stretch we must satisfy (\ref{eq:key estimate}). Proposition \ref{prop:gap} then shows that having (\ref{eq:key estimate}) along a set of positive upper density implies (\ref{eqn:divergence}), which gives unique ergodicity.
\begin{rem}
We should point out that although the logarithmic law (\ref{eqn:SystLogLaw}) is sufficient for unique ergodicity, it is not necessary; the Veech dichotomy for Veech surfaces provide counterexamples. Indeed a residual set of directions on a Veech surface do not satisfy Masur's log law and are also uniquely ergodic.
\end{rem}
\begin{rem}
\label{rem:minimal}
Since the present work is concerned with questions of unique ergodicity, we will be working under the assumption at all times that the vertical flow, whose ergodicity properties we are studying, is \textbf{minimal}, i.e., every orbit which is defined for all time is dense in the surface.
\end{rem}
The example in \S \ref{sec:example} shows that there exists a flat surface $(X,\omega)$ so that $\underset{t\rightarrow \infty}{\limsup}\, \frac{-\log(\delta_t)}{\log t}=1$ and the vertical flow is not uniquely ergodic. This motivates us to ask the following question.
\begin{ques*}Is there a flat surface $(X,\omega)$ so that $\underset{t \rightarrow \infty}{\limsup}\, \frac{-\log(\delta_t)}{\log t}<1$ and the vertical flow on $(X,\omega)$ is not uniquely ergodic? 
\end{ques*}
\begin{shoutouts}
J. C. was supported by NSF grants DMS-135500 and DMS-1452762. R. T. was supported by Supported by BSF grant 2010428, ERC starting grant DLGAPS 279893, and NSF Postdoctoral Fellowship DMS-1204008. Both authors thank ICERM, Oberwolfach and CIRM where work was done on this project. We heartily thank Alex Eskin for pointing out the important distinction between the flat geometry versus the hyperbolic one, especially for comparing distances in moduli space. 
\end{shoutouts}
\section{Complexes and interval exchange maps}
Two saddle connections are \textbf{disjoint} if they overlap at most at their endpoints. 
\begin{defin} A \emph{complex} of $(X,\omega)$ is a closed subset of the surface whose boundary is a union of disjoint saddle connections. If a simply connected region is bounded by a triangle made up of saddle connections in the complex, then the region is in the complex.
If a complex $\mathfrak{C}$ on a surface $(X,\omega)$ can be triangulated so that every saddle connection in the triangulation has length at most $\epsilon$ then we say $\mathfrak{C}$ is an $\epsilon$-complex. 
\end{defin}
Let $(X,\omega)$ be a flat surface. We say a subcomplex $\mathfrak{C}$ is an $(A,h)$ -subcomplex if its area is at most $A$ and the boundary of its interior has horizontal component at least $h$.
\begin{defin}
The \emph{level} of a complex  is the number of saddle connections in the complex.  
\end{defin}

The next lemma proven in \cite{flatgames} is the key technical tool of the paper. It allows us to add a short saddle connection to an $\epsilon$-complex to obtain and $\epsilon'$-complex of one level higher where $\epsilon'$ is still small. This technique goes back to \cite{KMS}.
\begin{lem}
\label{lem:combine}
Let $\mathfrak{C}$ be an $\epsilon$-complex and $\gamma\not\subset \mathfrak{C}$, i.e., $\gamma$ is a saddle connection, with length at most $\epsilon$, which intersects the exterior of $\mathfrak{C}$. Then there exists a complex $\mathfrak{C}' = \mathfrak{C} \cup \{\sigma \}$ formed by adding a saddle connection $\sigma$, disjoint from $\mathfrak{C}$, satisfying $|\sigma|\leq 6\epsilon$.
\end{lem}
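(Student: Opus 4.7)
The plan is to follow the combining technique of Kerckhoff, Masur, and Smillie, extracting $\sigma$ from a piece of $\gamma$ lying in the exterior of $K$ and then shortening it via a parallel perturbation argument.

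\textbf{Setup.} Since $\gamma \not\subset K$, I would choose $\alpha$ to be a connected component of the closure of $\gamma \cap (X \setminus K)$; its endpoints lie either on $\partial K$ or at singularities (which are the endpoints of $\gamma$). If $\alpha$ is already a saddle connection of length at most $6\epsilon$, then $\sigma := \alpha$ works: its interior is disjoint from $K$, hence from the boundary saddle connections of $K$ except possibly at endpoints, and adding $\sigma$ preserves the complex condition since any newly enclosed simply connected triangular region can be included by definition.

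\textbf{Shortening.} Otherwise $|\alpha|$ may be much larger than $\epsilon$. The idea is to produce a short perpendicular saddle connection which, together with one or two boundary saddle connections of $K$ (each of length $\leq \epsilon$) and a bounded portion of $\alpha$ near an endpoint, closes up into a short new saddle connection. Consider the flow perpendicular to $\alpha$: a parallel strip of width $s$ around $\alpha$ has area on the order of $|\alpha| \cdot s$, and since the surface has total area $1$, such a strip must encounter a singularity or re-enter $\partial K$ once $s$ is of order $1/|\alpha|$, which is much smaller than $\epsilon$ when $|\alpha|$ is large. The perpendicular short segment to a singularity, concatenated with at most two boundary saddle connections of $\partial K$ (each $\leq \epsilon$) and a short sub-arc of $\alpha$, then forms a polygonal path in the exterior of $K$ joining two singularities. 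The straight-line saddle connection $\sigma$ between these singularities satisfies $|\sigma|\leq 6\epsilon$ by the triangle inequality (at most six contributions each $\leq \epsilon$: boundary edges, perpendicular shortcuts, and residual pieces) and, by construction, is homotopic rel endpoints to a path in the exterior of $K$, hence is disjoint from the saddle connections of $K$.

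\textbf{Main obstacle.} The principal difficulty is combinatorial bookkeeping: ensuring both that the straight-line segment realizing $\sigma$ is genuinely disjoint from every existing boundary saddle connection of $K$ (not merely short) and that $K \cup \{\sigma\}$ satisfies the triangle-filling condition of a complex. Disjointness requires selecting the perpendicular trajectory and target singularity so that $\sigma$ does not cross any edge of $\partial K$; this uses that the edges of $\partial K$ are themselves mutually disjoint saddle connections, and that the perpendicular shortcut plus boundary-edge concatenation stays in the exterior of $K$. The triangle-filling condition is then essentially automatic, since $\sigma$ is attached along the exterior of $K$ and any newly enclosed simply connected triangular region is added to $K \cup \{\sigma\}$ by the definition without changing the level beyond $+1$.
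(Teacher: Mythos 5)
The paper does not prove Lemma~\ref{lem:combine} itself; it cites it from \cite{flatgames}, so there is no in-paper proof to compare against. Nonetheless, the argument you sketch has genuine gaps that prevent it from standing on its own.

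The central problem is the ``shortening'' step. You argue that a parallel strip of width $s$ around the exterior arc $\alpha$ has area on the order of $|\alpha|\cdot s$, so it must meet a singularity or $\partial K$ once $s$ is of order $1/|\alpha|$, ``which is much smaller than $\epsilon$ when $|\alpha|$ is large.'' But in the case you are treating, all you know is $|\alpha|>6\epsilon$, which for small $\epsilon$ gives $1/|\alpha|<1/(6\epsilon)$ --- a quantity that is \emph{much larger} than $\epsilon$, not smaller. Nothing in the hypotheses forces $|\alpha|$ to exceed $1/\epsilon$, so the perpendicular shortcut you produce has no a priori bound by $\epsilon$, and the final ``six contributions each $\leq\epsilon$'' accounting collapses. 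The correct argument has to exploit the geometry \emph{near a boundary edge of $K$ that $\gamma$ crosses} (all such edges have length $\leq\epsilon$), not the length of $\alpha$, and the constant $6$ comes from carefully developing a neighborhood of such a crossing in the plane; your area-of-a-strip heuristic simply does not localize near $\partial K$.

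A second, independent gap is the disjointness claim: you assert that the straight-line saddle connection $\sigma$ ``is homotopic rel endpoints to a path in the exterior of $K$, hence is disjoint from the saddle connections of $K$.'' This inference is false: geodesic representatives of a fixed-endpoint homotopy class routinely cross regions the original path avoided, and a straight segment between two cone points can pass through $K$ even when the polygonal path you built stays outside. Disjointness from the edges of $K$ has to be established geometrically (e.g.\ by showing $\sigma$ lies in a flat triangle or trapezoid that you have explicitly exhibited inside the exterior of $K$), not by appealing to a homotopy. Until both of these issues are repaired, the proof does not go through.
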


\begin{rem}
\label{rmk:mbound}
There exists a constant $\mathcal{M}$ which depends only on the stratum to which a flat surface $(X,\omega)$ belongs, such that if $\mathfrak{C}$ is a level $m$ complex, then $m\leq \mathcal{M}$. The constant $\mathcal{M}$ is given by the number of saddle connections needed to triangulate the surface $(X,\omega)$.
\end{rem}

\subsection{Interval Exchange Maps}
Let $(X,\omega)$ be a flat surface and consider the vertical flow $\varphi_t^v:X\rightarrow X$. Let $\beta$ be a union of saddle connections, all of which are transverse to the vertical flow. Define the map $T:\beta\rightarrow \beta$ by $T(x) = \varphi_{r(x)}^v (x)$, where $r(x)$ is the first return time to $\beta$: $r(x) = \min\{t>0:\varphi_t^v(x)\in\beta\}$. The map $T:\beta\rightarrow \beta$ is an \textbf{interval exchange map} since it induces a map of $\beta$ with finitely many discontinuity points $\{\delta_1,\dots ,\delta_{d+1} \}$ (we think of the endpoints of $\beta$ as discontinuity points as well), where $d$ only depends on the topology of $(X,\omega)$, i.e., on $g$ and $\Sigma$. Since the flow is minimal (see Remark \ref{rem:minimal}), the map $T:\beta\rightarrow\beta$ is well defined and it exchanges $d$ intervals.
 There is an upper bound on $d$ that depends only on the topology. Let $\mathcal{D}_{\Gamma}=\{\delta_1,...,\delta_{d_\Gamma}\}$ be the discontinuities of $T_{\Gamma}$ (again, we consider endpoints of saddle connections as discontinuities). These are preimages of $\Sigma$ under the vertical flow. That is, for each $\delta_i$, there exists an $s_i = s(\delta_i)>0$ so that $\varphi^v_{t}(\delta_i) \rightarrow q\in \Sigma$ as $t\rightarrow s_i$. Let
$$H_\Gamma=\{s_i\}_{i=1}^{d_\Gamma}$$
be these times. For every interval $I_j$ we denote by $\partial^- I_j \in \mathcal{D}_\Gamma$ its left endpoint and $\partial^+ I_j \in \mathcal{D}_\Gamma$ its right endpoint. As such, we denote by $s^\pm(I_j) = s_j^\pm \in H_\Gamma$ the corresponding times for the left and right endpoints. 

The following convention related to maps defined by the union of saddle connections will be crucial for the entirety of the paper.
\begin{conv*}
Let $\mathfrak{C}$ be a complex and $\Gamma = \partial \mathfrak{C}$. In this case, the only maps we will consider will be the first-return maps to saddle connections in $\partial\mathfrak{C}$ \emph{which flow into $\mathfrak{C}$}. In other words, when considering a union of saddle connections $\Gamma$ which make up the boundary of a complex $\mathfrak{C}$ we will consider the induced, first-return map $T:\Gamma'\rightarrow \Gamma'$, where $\Gamma'\subset \partial\mathfrak{C}$ is the largest collection of saddle connections in $\Gamma$ which flow into the interior of $\Gamma$ (that is, for each $p \in \Gamma'$, there exists $\epsilon>0$ so that $\phi^v_s(p)\in \mathfrak{C}$ for all $0<s<\epsilon$). This means that if $\mathfrak{C}$ has empty interior, then no map is defined and, if it does, then it is always true that $\Gamma'\varsubsetneq \Gamma$. For any union of saddle connections $\Gamma$, when we write $\gamma\in\Gamma$ we mean the saddle connection $\gamma$ which is contained in $\Gamma$.

Moreover, if $t_\gamma$ is the first return time of $\gamma\in\Gamma'$ to $\Gamma'$, i.e. $\varphi^v_{t_\gamma}(\gamma)\cap \Gamma \neq \varnothing$, and $\varphi^v_s(\gamma)\cap \Sigma = \varnothing$ for all $s\in (0,t_\gamma]$, then we establish the convention that either $s_{\partial_1\gamma}\in H_{\Gamma}$ or $s_{\partial_2\gamma} \in H_{\Gamma}$ is zero, i.e., one endpoint of $\gamma$ contributes zero in $H_{\Gamma}$.
\end{conv*}

\section{Long orbits imply short closed curves}
\begin{prop}
\label{prop:baby} 
Let $\mathfrak{C}$ be a $(A,h)$-complex on a flat surface $(X,\omega)$. Suppose there exists an $N>1$ and a vertical trajectory of length at least $N\frac Ah$ completely contained in $\mathfrak{C}$. Then there exists a closed geodesic which can be shrunk to size at most $\sqrt{2}d^{\frac{3}{2}} N^{-c}\sqrt{A}$ under $g_t$ for some $t\leq \frac{1}{2}\log(\frac{dN^2A}{h^2})$, where $c,d>0$ and only depend on topology.
\end{prop}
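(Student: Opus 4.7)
The plan is to assume case (1) fails---so a vertical trajectory of length at least $NA/h$ lies entirely in $\mathfrak{C}$---and to use this long orbit to construct a closed geodesic on $(X,\omega)$ with tightly controlled holonomy, which then shrinks under $g_t$ to the claimed bound. The overall strategy proceeds in four moves: long orbit $\Rightarrow$ narrow flow tower $\Rightarrow$ close returns on the transversal $\Rightarrow$ closed loop on the surface $\Rightarrow$ short curve under optimal Teichm\"uller deformation.

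Invoking the Convention on complexes and interval exchanges, I consider the first-return map $T:\Gamma'\to\Gamma'$ on the inward-flowing portion $\Gamma'$ of $\partial\mathfrak{C}$. This is an interval exchange with at most $d$ intervals $I_1,\dots,I_{d'}$ of return times $r_1,\dots,r_{d'}$, satisfying the area identity $\sum_j |I_j|\,r_j\leq A$ because the associated flow towers are disjoint and embedded in $\mathfrak{C}$. The hypothesis of a long trajectory forces some $r_{j^*}\geq NA/h$ and hence $|I_{j^*}|\leq h/N$, producing an embedded flow tower $\mathcal{T}=I_{j^*}\times[0,r_{j^*}]\subset\mathfrak{C}$. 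Next, tracking the horizontal positions of successive iterates of $T$ along the orbit, I apply a pigeonhole argument refined iteratively (Rauzy--Veech style) on progressively narrower sub-transversals inside the tower to locate a pair of iterates $x_i,x_j$ whose horizontal gap is at most $O(dh\,N^{-1-c})$ and whose vertical separation along the orbit is at most $O(d^2AN^{1-c}/h)$. The exponent $c>0$ and the factor $d$ come from the topological bound $\mathcal{M}$ on the number of refinement steps (Remark \ref{rmk:mbound}), together with the spectral gap of the Rauzy--Veech cocycle on the stratum.

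From the pair $(x_i,x_j)$ I form a closed loop $\alpha$ on $(X,\omega)$ by concatenating the vertical orbit arc from $x_i$ to $x_j$ with a short horizontal geodesic segment along $\partial\mathfrak{C}$ joining $x_j$ back to $x_i$; this closing segment is a genuine geodesic because the hypothesis $h(\partial\mathfrak{C})\geq h$ supplies enough horizontal boundary to house it. Writing the holonomy as $(\alpha^h,\alpha^v)$, the bounds above give $|\alpha^h|\cdot|\alpha^v|\leq d^3AN^{-2c}$ while $|\alpha^v|/|\alpha^h|\leq dN^2A/h^2$. Choosing the optimal Teichm\"uller time $t=\tfrac{1}{2}\log(|\alpha^v|/|\alpha^h|)\leq \tfrac{1}{2}\log(dN^2A/h^2)$ (inside the permitted window), the length becomes $|g_t\alpha|=\sqrt{2|\alpha^h|\,|\alpha^v|}\leq \sqrt{2}\,d^{3/2}\,N^{-c}\sqrt{A}$, which is exactly the claim.

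The principal difficulty lies in extracting the $N^{-c}$ factor: a single naive pigeonhole yields only a horizontal gap of $h/N$, giving $|\alpha^h|\cdot|\alpha^v|\leq A$, and hence after the Teichm\"uller deformation produces a closed geodesic of length merely $O(\sqrt{A})$---no improvement with $N$. Sharpening to $N^{-1-c}$ (which feeds through to the $N^{-c}$ in the final length) requires iterating the IET on narrower and narrower sub-transversals of the tower, carefully bookkeeping how area and horizontal extent propagate at each step; the number of iterations is controlled by $\mathcal{M}$, and the spectral properties of the Rauzy--Veech cocycle on the stratum provide the exponent $c$. A secondary technical point is verifying that the horizontal closing segment is an honest saddle-connection-aligned geodesic on $(X,\omega)$; this follows from $h(\partial\mathfrak{C})\geq h$, which lets the segment be routed along the horizontal boundary saddle connections.
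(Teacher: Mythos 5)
Your overall strategy (negate (1), extract a long orbit inside $\mathfrak{C}$, produce a closed curve with controlled holonomy, then optimize the Teichm\"uller time) is the right shape, and your final arithmetic from the holonomy bounds to $\sqrt{2}d^{3/2}N^{-c}\sqrt{A}$ and $\frac{1}{2}\log(dN^2A/h^2)$ does match the statement. But the middle of the argument, which is where all the content lies, has gaps that I do not see how to close.

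First, the step ``the hypothesis of a long trajectory forces some $r_{j^*}\geq NA/h$, hence $|I_{j^*}|\leq h/N$'' is not justified. A complex $\mathfrak{C}$ can have $\mathfrak{C}$ on both sides of a boundary saddle connection (a slit), and then the long orbit can cross $\Gamma'=\partial\mathfrak{C}$ many times while never leaving $\mathfrak{C}$. In that case no single first-return time to $\Gamma'$ need be anywhere near $NA/h$, and there is no narrow embedded tower. The paper avoids this by working not with a single return time but with the whole finite set of ``event times'' $L_\Gamma\cup H_\Gamma$ (earliest return-or-exit times of each interval of continuity, and hitting times of discontinuities), proving that this set of at most $2d$ numbers is $(M,N^c)$-detached via a logarithmic pigeonhole on $[0,NA/h]$. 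That detachedness, not a single long tower, is what creates the separation between ``early'' and ``late'' intervals.

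Second, you appeal to ``the spectral gap of the Rauzy--Veech cocycle on the stratum'' to get the exponent $c$ from an iterated pigeonhole. That is not a deterministic statement: it holds for a.e.\ interval exchange (or Lebesgue-a.e.\ direction), and Proposition~\ref{prop:baby} is a statement about \emph{one fixed} surface and direction, with no randomness to average over. The paper's $c$ is completely elementary, namely $c=\frac{1}{2d}$, and comes from a single logarithmic pigeonhole: the set $L_\Gamma\cup H_\Gamma$ has at most $2d$ points, so some consecutive gap in $[\min L_\Gamma,\,NA/h]$ has logarithmic length at least $\frac{\log N}{2d}$. No cocycle enters at all.

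Third, your closed curve is ``vertical orbit arc from $x_i$ to $x_j$'' followed by a ``short horizontal geodesic segment along $\partial\mathfrak{C}$.'' But $\partial\mathfrak{C}$ is a union of short saddle connections of arbitrary slope, not a horizontal segment, so the closing arc contributes both horizontal and vertical holonomy and its length under $g_t$ is not $e^t|\alpha^h|$; the optimization you write down does not apply to the curve as constructed. More seriously, you never argue that the resulting loop is homotopically nontrivial, which is the entire point of producing a ``closed geodesic'' that can be shrunk. The paper handles this with a genuinely combinatorial device: it declares a discontinuity ``conflicted'' when an early and a late interval meet there, shows each conflicted point flows to a singularity before time $M$ (Lemma~\ref{lem:special early}), chains conflicted points into geodesic segments with small vertical and horizontal components (Lemmas~\ref{lem:shrinkable},~\ref{lem:spec chain}), builds a finite graph whose vertices have valence at least $2$, extracts a cycle (Lemma~\ref{lem:cycle}), and then proves the resulting closed geodesic is homotopically nontrivial using the angle-$\geq\pi$ criterion of Strebel (Lemma~\ref{lem:non trivial}). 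None of this is captured by a close-return pigeonhole on a single tower. The object in case~(2) of the Proposition is a closed \emph{geodesic}, and the paper produces one; your loop is not a geodesic and would need to be replaced by the geodesic in its (still unverified-to-be-nontrivial) homotopy class.

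In short, the endgame of your proposal is consistent with the statement, but the existence of a narrow tower, the source of the exponent $c$, and the production of a homotopically nontrivial short closed geodesic all rest on assertions that either fail for complexes with slits, or invoke a.e.-type results where a deterministic estimate is required.
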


We will prove Proposition \ref{prop:baby} at the end of the section. For now, we briefly sketch the ideas which will be involved in the proof and point to the auxiliary lemmas in this section which will help with each part. Lemma \ref{lem:shrinkable} (with key input of Lemma \ref{lem:we weird}) shows that if a transversal to the flow of length $\ell$ has a point on it with a first return time $\frac{K}\ell$, for some large $K$, then there has to be a saddle connection that can be made short. Lemma \ref{lem:spec chain} and the results from \S \ref{subsec:relation} will allow us to get enough such saddle connections to make a simple closed curve which we will then make short through $g_t$. 

Denote by $\Gamma = \partial \mathfrak{C}$ the boundary of the complex $\mathfrak{C}$. For each interval of continuity $I_k$ of the map $T:\Gamma'\rightarrow \Gamma'$ we may consider the minimum of the infimum of return times of points in $I_k$ to $\Gamma$ and the infimum of times at which some point in $I_k$ leaves $\mathfrak{C}$ under the vertical flow. Let $L_{\Gamma}=\{l_i\}_{i=1}^d$ be these times. Note that points in $\Gamma$ either leave the complex immediately under the flow or their return time is the time at which they leave the complex. 

\begin{defin} Let $M>0, C>1$. We say $S\subset \mathbb{R}_+$ is $(M,C)$-\emph{detached} if 
\begin{enumerate}
\item $ S\cap [0,M)\neq \varnothing.$
\item $S\cap (CM,\infty)\neq \varnothing$.
\item $S\cap [M,CM]=\varnothing$.
\end{enumerate}
\end{defin}
\begin{figure}
\begin{center}
\includegraphics[width=3.5in]{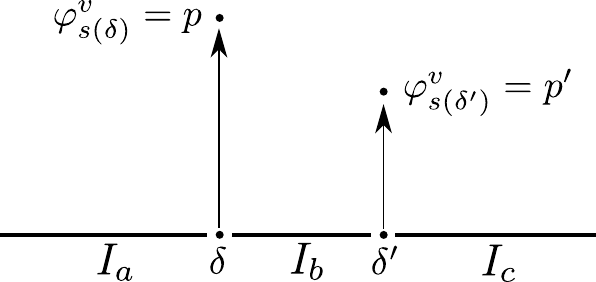}
\end{center}
\caption{The times $s(\delta)$ and $s(\delta')$ are both smaller than the return times of $I_a$ and $I_c$, respectively. Since $|I_b|\leq \frac{\mathrm{Area(subcomplex)}}{\mbox{return time to }I_b}$, and so, if the return time to $I_b$ is much greater than $\max\{s(\delta),s(\delta')\}$, we have a saddle connection between $p$ and $p'$ that is made small under $g_t$.}
\label{fig:return}
\end{figure}
The definition is motivated by our goal of finding saddle connections which can be made short. In order to prove that there are such saddle connections, we consider the return times of different intervals. If two return times of consecutive intervals are vastly different ($(M,C)$-detached for some $M,C$), then we will be able to find a short saddle connection. See Figure \ref{fig:return} for an illustration of the idea: two intervals with quick return times bound an interval with a long return time. As such, we can find a saddle connection.
\begin{lem}
\label{lem:we weird}
Let $\mathfrak{C}$ be an $(A,h)$-complex. Suppose there exists a vertical trajectory of length $N\frac A h>\frac A h$ that stays in the complex. Then there exists $M$ so that $H_\Gamma \cup L_\Gamma$ is $(M,N^c)$ detached for some $c>0$. Moreover, $MN^c$ can be chosen to be less than $N\frac A h$.
\end{lem}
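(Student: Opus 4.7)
The plan is to apply pigeonhole to the finite set $H_\Gamma \cup L_\Gamma$, whose cardinality $K$ is bounded by a topological constant. Two anchor bounds drive the argument: the long-trajectory hypothesis forces $H_\Gamma \cup L_\Gamma$ to contain an element of size at least $NA/h$, while an area/flux identity forces it to contain a positive element of size at most $2A/h$. Since $K$ elements span a multiplicative range of at least $N/2$, some consecutive multiplicative gap must be of size at least $N^c$ for a small enough topological constant $c$, and this gap yields the detached interval.

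For the upper anchor, extend a trajectory of length at least $NA/h$ in $\mathfrak{C}$ backward in time until it first meets $\Gamma'$ at a point $p_0 \in I_k$; for generic starting data this backward extension is well-defined. On the continuity interval $I_k$ the first-return map $T_\Gamma$ is continuous, which in turn forces the exit time from $\mathfrak{C}$ to be constant on $I_k$: all trajectories starting in $I_k$ sweep the same embedded rectangle in $\mathfrak{C}$ and leave through a common outflowing saddle connection. Thus $l_k$ equals this common exit time, which is at least $NA/h$. For the lower anchor, the rectangles $\{I_k \times [0, l_k]\}_k$ embed disjointly into $\mathfrak{C}$, giving $\sum_k h(I_k)\, l_k \leq A$. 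The flux identity $\int_{\partial\mathfrak{C}} \Re(\omega) = 0$ (from closedness of $\Re(\omega)$) forces $h(\Gamma') = h(\Gamma\setminus\Gamma') = h(\Gamma)/2 \geq h/2$, so $\sum_k h(I_k) = h(\Gamma') \geq h/2$ and hence $\min_k l_k \leq 2A/h$.

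Enumerate the positive elements of $H_\Gamma \cup L_\Gamma$ as $s_1 < \cdots < s_K$. Let $j$ be the smallest index with $s_j \geq N^{1/2} A/h$; it exists because $s_K \geq NA/h$, and satisfies $j \geq 2$ once $N > 4$ since $s_1 \leq 2A/h$. The telescoping product $s_j/s_1 \geq N^{1/2}/2$ over the $j-1 \leq K-1$ consecutive ratios forces some ratio $s_{i+1}/s_i$ to be at least $(N^{1/2}/2)^{1/(K-1)} \geq N^c$ for $c := 1/(4K)$ and $N$ beyond a topological threshold. For that $i$, any $M \in (s_i, \min(s_{i+1}/N^c, N^{1-c} A/h))$ realizes $(M, N^c)$-detachedness and satisfies $MN^c < NA/h$; the interval for $M$ is nonempty because $s_i \leq s_{j-1} < N^{1/2} A/h \leq N^{1-c} A/h$ and $s_{i+1}/s_i > N^c$.

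The main obstacle is justifying the upper anchor rigorously: one must confirm that the exit time is truly constant on continuity intervals of $T_\Gamma$ (using that return-continuity refines exit-continuity for this IET), handle the degenerate possibility that the backward extension terminates at a singularity (by replacing the trajectory through $p$ with a nearby generic one in the same continuity interval), and respect the endpoint-zero convention for $H_\Gamma$ so that the enumeration of strictly positive elements $s_1 < \cdots < s_K$ is legitimate. The flux identity underlying the lower anchor relies only on the definition of a complex as a closed region bounded by saddle connections and is routine.
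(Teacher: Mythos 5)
Your overall strategy matches the paper's: both arguments run a multiplicative pigeonhole on the finite set $L_\Gamma\cup H_\Gamma$ over the range $[0,NA/h]$, anchored below by the observation that $\min L_\Gamma$ is small (the paper states $\min L_\Gamma\leq A/h$; your flux version gives $2A/h$, which is harmless and only shifts the constant $c$). The paper's proof is brief: it partitions $[0,NA/h]$ by the elements of $L_\Gamma\cup H_\Gamma$ below $NA/h$, uses $|L_\Gamma|+|H_\Gamma|\leq 2d$ to find a gap of log-length at least $\tfrac{\log N}{2d}$, and reads off $c=\tfrac 1{2d}$; it does not attempt to manufacture an explicit element of $L_\Gamma\cup H_\Gamma$ of size $\geq NA/h$ the way you do.

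The genuine problem is your upper anchor. You assert that the exit time from $\mathfrak{C}$ is constant on a continuity interval $I_k$ and hence $l_k\geq NA/h$. This is false in general: $I_k$ sits along a saddle connection of some slope, its image under the flow lands on another saddle connection of typically different slope, so the exit/return time is an affine (not constant) function on $I_k$, and the infimum $l_k$ is attained at an endpoint of $I_k$ and can be far smaller than the exit time of your chosen interior point $p_0$. Moreover $l_k$ is by definition the minimum of (i) the infimum of return times to all of $\Gamma$ and (ii) the infimum of exit times; even if $p_0$ has a long sojourn inside $\mathfrak{C}$, an endpoint of $I_k$ may hit $\Gamma$ or a singularity almost immediately, giving a tiny $l_k$. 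So the claim "$l_k\geq NA/h$'' does not follow from the existence of one long trajectory through $I_k$, and the argument as written cannot deliver an element of $L_\Gamma$ of size $\geq NA/h$. The endpoint behavior is exactly what $H_\Gamma$ is designed to record, and the paper avoids your difficulty by never claiming such an element exists: it simply works with the cut points of the interval $[0,NA/h]$ and keeps the right endpoint $NA/h$ as part of the partition, letting the long trajectory justify that this whole range is relevant. Your lower anchor (disjoint flow-parallelograms plus the flux identity, giving $\min l_k\leq 2A/h$) is fine, and the telescoping-ratio pigeonhole at the end is correct once you have both anchors; the defect is solely the justification of the upper one.
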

In other words, if the maximal return time of the flow to the boundary of the complex is large enough then there are two consecutive return times with a large ratio.
\begin{proof}
Let $\Gamma = \partial \mathfrak{C}$. By the fact that our flow is injective and the definition of the first return map, $\phi^v_s(x)\neq \phi^v_t(y)$ for all $(x,t)\neq (y,s)$ where $x,y \in \Gamma'$ and $0<s,t<\min L_{\Gamma}$. Thus, $\lambda(\cup_{s=0}^{\min  L_\Gamma}\phi^v_s\Gamma')=h\min L_\Gamma\leq A$ implying that
\begin{equation}
\label{eqn:est1}
\min \{\alpha \in L_{\Gamma}: \alpha  >0\} \leq \frac{A}{h}.
\end{equation}
By the assumption of the lemma we have that $\max L_{\Gamma}\geq N \frac A h$. There is a number $d$, which depends only on the genus of $\omega$ such that $P_\Gamma := |L_\Gamma| + |H_\Gamma| \leq 2d$.
 Consider the interval $\mathcal{I}_\Gamma = [\min \{\alpha\in L_\Gamma:\alpha>0\}  ,   N\frac{A}{h}]\subset \mathbb{R}$ and we consider it partitioned into $r\leq P_\Gamma\leq 2d$ intervals $E_1,\dots, E_r$ by the numbers of $L_{\Gamma}$ and $H_{\Gamma}$ that are less than $N\frac Ah$ . Let $E_i=[a_i,b_i]$ and $\ell_i=\log(b_i)-\log(a_i)$. Note that by (\ref{eqn:est1}) and the fact that the right endpoint of $\mathcal{I}_\Gamma$ is $N\frac{A}{h}$ we have that 
 $$\max \,\ell_i \geq \frac{\log(N\frac A h)-\min \{\alpha\in L_\Gamma:\alpha>0\}}{2d}\geq \frac{\log(N)}{2d}. $$ This implies that $L_\Gamma\cup H_\Gamma$ is $(M,N^c)$-detached with $c = \frac{1}{2d}$. Moreover, by construction, $MN^c\leq N\frac{A}{h}$.
\end{proof}
 For the remainder of this section, we fix an $M$ whose existence is guaranteed by Lemma 2 and set $C = N^c.$ 
\begin{defin} If $L_{\Gamma}$ is $(M,C)$ detached we say $I_j$ is \emph{early} if $l_j<M$. Otherwise it is \emph{late}. 
We say a discontinuity $\delta_i$ is \emph{conflicted} if one of $I_i$, $I_{i+1}$ is early and the other is late.
\end{defin}
\begin{lem}
\label{lem:special early} If $\delta_i$ is conflicted then $s_i<M$.
\end{lem}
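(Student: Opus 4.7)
The plan is to prove the contrapositive: if $s_i \geq M$, then $\delta_i$ is not conflicted. To conflict-break the pair $(I_i, I_{i+1})$ in this way, it suffices to show that $l_i \geq s_i$ and $l_{i+1} \geq s_i$, because then both intervals satisfy $l \geq M$, so neither one is early.

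The core geometric observation is that the trajectory of $\delta_i$ itself, for $t \in (0, s_i)$, avoids $\Sigma$ (by definition of $s_i$ as the first hit time) and also avoids the non-singular part of $\Gamma$. The latter holds because $\delta_i$ is a genuine discontinuity of the first-return map $T \colon \Gamma' \to \Gamma'$ whose associated singular time is $s_i$: if the trajectory from $\delta_i$ hit $\Gamma$ transversally at some time $r_0 < s_i$, then $T$ would be defined and continuous at $\delta_i$ with return time $r_0$, contradicting $s_i$ being the associated discontinuity time. The edge case in which $\delta_i$ is an endpoint of a saddle connection in $\Gamma'$ gives $s_i = 0$ by the convention adopted at the end of Section~1, so $s_i < M$ holds trivially and the lemma is immediate for such $\delta_i$.

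Assuming we are in the generic discontinuity case, the plan is to exploit continuity of the vertical flow away from $\Sigma$. For any $\epsilon > 0$, points $x \in I_i$ sufficiently close to $\delta_i$ have trajectories $\varphi^v_t(x)$ uniformly close to $\varphi^v_t(\delta_i)$ on the compact interval $[0, s_i - \epsilon]$. Since $\varphi^v_t(\delta_i)$ lies in the open interior of $\mathfrak{C}$ and is bounded away from $\Gamma$ on this interval, the same is true of $\varphi^v_t(x)$ for $x$ close enough. Consequently both the return time $r(x)$ to $\Gamma$ and the leaving time from $\mathfrak{C}$ exceed $s_i - \epsilon$. Since both of these quantities are constant on the interval of continuity $I_i$, this forces $\rho_i, \lambda_i \geq s_i - \epsilon$ for every $\epsilon > 0$, hence $l_i = \min(\rho_i, \lambda_i) \geq s_i$. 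Applying the identical argument to $I_{i+1}$ yields $l_{i+1} \geq s_i$, completing the contrapositive.

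The main obstacle, such as it is, lies in justifying that the trajectory of $\delta_i$ stays in the interior of $\mathfrak{C}$ and away from $\Gamma$ on $(0, s_i)$, since $T$ being discontinuous at $\delta_i$ is \emph{a priori} compatible with several types of singular behavior (hitting $\Sigma$ first, or first returning to $\Gamma$ exactly at an endpoint of a saddle connection). One resolves this by checking each case against the convention that assigns $s_i \in H_\Gamma$ precisely to the first-$\Sigma$-hitting time, and noting that either $s_i = 0$ (the endpoint case) or the trajectory is interior to $\mathfrak{C}$ strictly before $s_i$; in the latter case, the shadowing argument above yields the desired bound. No further analytic input is needed beyond uniform continuity of the flow on compact sets disjoint from $\Sigma$.
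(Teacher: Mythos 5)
Your proof hinges on two claims that are not justified, and neither matches the paper's (admittedly terse) argument.

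First, you assert that the return time and leaving time are constant on each interval of continuity $I_i$, and use this to pass from ``$r(x),\ell(x)\geq s_i-\epsilon$ for $x$ near $\delta_i$'' to ``$l_i \geq s_i$.'' This is false in general: the boundary $\Gamma=\partial\mathfrak{C}$ consists of saddle connections of arbitrary slope, so the first return time to $\Gamma$ and the exit time from $\mathfrak{C}$ vary (affinely, or piecewise affinely) across an interval of continuity rather than being constant. Since $l_i$ is defined as an infimum over the \emph{entire} interval $I_i$, a bound established only for $x$ near $\delta_i$ does not control $l_i$ — the infimum may well be attained at the opposite endpoint $\delta_{i-1}$. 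Some input about the variation of $r,\ell$ across $I_i$, or a different mechanism, is needed here.

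Second, your justification of the claim that the trajectory of $\delta_i$ avoids $\Gamma$ on $(0,s_i)$ has a hole. You argue that hitting $\Gamma$ transversally before $s_i$ would make $T$ continuous at $\delta_i$, but $T$ is the first-return map to $\Gamma'\subsetneq\Gamma$ (the saddle connections that flow into $\mathfrak{C}$). The trajectory can perfectly well cross a saddle connection in $\Gamma\setminus\Gamma'$ — that is, exit the complex — at a time $q<s_i$, continue outside $\mathfrak{C}$, and then hit $\Sigma$ at $s_i$. This leaves $T$ discontinuous at $\delta_i$ while violating your claimed avoidance of $\Gamma$. Since the exit time $\ell$ is one of the two quantities controlling $l_i$, this possibility directly undercuts the argument. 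The paper's own proof is a direct one: it observes that $I_a$ and $I_b$ ``travel together'' until they are split by the singularity that $\delta_i$ flows into, and since one is early and the other is late, the split must occur before $M$; it does not appeal to constancy of return times, nor does it claim the $\delta_i$-trajectory avoids $\Gamma$ before $s_i$.
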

\begin{proof}Let $I_a$ be the early interval and $I_b$ be the adjacent late interval to $\delta_i$. They travel together until they hit a discontinuity and since one is early and the other is late they have to separate before $M$. So $s_i<M$.
\end{proof}
\begin{lem}
\label{lem:shrinkable}If $\delta_i$ is conflicted then $\varphi_{s(\delta_i)}(\delta_i)^{ v}$ is an endpoint of a saddle connection with a vertical component at most $M$ and horizontal component at most $\frac A {MC}$.
\end{lem}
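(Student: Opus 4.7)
The plan is to extract the saddle connection from the late rectangle $R_b$ swept out by flowing $I_b$ vertically for time $l_b$. I would begin by recording two consequences of the hypothesis: since $I_b$ is a continuity interval for $T_\Gamma$, the interior of $R_b$ contains no singularities, and since $R_b \subset \mathfrak{C}$ one has
$$h(I_b) \cdot l_b = \mathrm{area}(R_b) \leq A, \qquad \text{so} \qquad h(I_b) \leq \frac{A}{l_b} \leq \frac{A}{CM},$$
using that $I_b$ is late, i.e.\ $l_b \geq CM$. The singularity $p := \varphi_{s(\delta_i)}(\delta_i)$ sits on the left vertical edge of $R_b$ at flow-height $s_i$, and $s_i < M$ by Lemma \ref{lem:special early}.

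Next I would produce the saddle connection by a case analysis carried out inside $R_b$. In the first case, if the right-side continuation of $p$ upward (which is the portion of the left edge of $R_b$ above $p$) encounters a singularity $q$ at vertical distance at most $M$, then $pq$ is a vertical saddle connection with $v(pq) \leq M$ and $h(pq) = 0$, so the bounds are met. Otherwise, I would use the singularity $p' := \varphi_{s_{i+1}}^v(\delta_{i+1})$ sitting on the right vertical edge of $R_b$; because the interior of $R_b$ is free of singularities, the straight segment from $p$ to $p'$ is a saddle connection lying in $R_b$, and its horizontal component is exactly $h(I_b) \leq A/(MC)$.

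The main obstacle will be bounding the vertical component of $pp'$ in the second case by $M$. For this I would exploit the early side: the rectangle $R_a$ above $I_a$ has height $l_a < M$, and the left-side continuation of $p$ enters $R_a$ and returns to $\Gamma$ within time $l_a - s_i < M$. This asymmetry at $p$ between the tall late rectangle and the shallow early rectangle, together with the convention that one endpoint of each saddle connection in $\Gamma'$ contributes zero to $H_\Gamma$ and the finite bound on the number of discontinuities (Remark \ref{rmk:mbound}), should force the first singularity encountered by the boundary of $R_b$ above or adjacent to $p$ to lie within vertical distance $M$; combining this with the horizontal bound $h(I_b) \leq A/(MC)$ yields the desired saddle connection. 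Making this geometric constraint precise in the presence of arbitrary cone angles at $p$ and possibly non-horizontal $I_b$ will be the crux.
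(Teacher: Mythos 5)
Your area estimate $h(I_b)\,l_b \le A$, giving $h(I_b) \le A/(CM)$, is correct and matches the paper's, but the step you flag as the crux is where the argument genuinely breaks down, and it cannot be repaired while staying inside the single rectangle $R_b$. The right endpoint $\delta_j$ of $I_b$ need not be a conflicted discontinuity: the interval $I_{j+1}$ on its other side may itself be late, in which case Lemma \ref{lem:special early} gives no bound on $s_j$, and there is simply no reason for $p' = \varphi^v_{s_j}(\delta_j)$ to lie within vertical distance $M$ of $p$. Neither the convention that one endpoint of each saddle connection in $\Gamma'$ contributes $0$ to $H_\Gamma$ nor the bound $\mathcal{M}$ on the number of discontinuities helps here; several consecutive late intervals can flow together past time $M$ with no singularity on either vertical edge of $R_b$ within height $M$ of $p$, so ``the first singularity encountered by the boundary of $R_b$ above or adjacent to $p$'' may be farther than $M$ away.

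The paper avoids this by replacing $R_b$ with the maximal connected run $J$ of consecutive late intervals containing $I_b$. By maximality the interval just beyond the far endpoint $\delta_k$ of $J$ is early, so $\delta_k$ is also conflicted and $s_k < M$ by Lemma \ref{lem:special early}. Every interval in $J$ stays in the complex until time $CM$, so the area estimate still gives $h(J) \le A/(CM)$. Then one either takes the saddle connection from $\varphi_{s_i}(\delta_i)$ to $\varphi_{s_k}(\delta_k)$, or, if some interior discontinuity $\delta_p$ of $J$ hits a singularity before time $M$, takes the one closest to $\delta_i$ and forms the saddle connection from $\varphi_{s_i}(\delta_i)$ to $\varphi_{s_p}(\delta_p)$. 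In both cases the vertical component is at most $M$ and the horizontal component at most $A/(CM)$. Your construction would go through if you replaced the rectangle over $I_b$ with the region swept out by $J$ and used the conflicted discontinuity at its far end (or a nearer interior one with short hitting time) as the second endpoint; as written, the second case has no control on $s_j$ and the claimed bound $v(pp') \le M$ is unjustified.
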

\begin{proof} Let $I_a$ be the early interval and $I_b$ be the late interval. By Lemma \ref{lem:special early} $s_i<M$. Let $J$ be the maximal connected set of late intervals that $I_b$ belongs to. So  $\delta_i$ is one endpoint of $J$ and $\delta_k$ is the other. Note $\delta_k$ is also conflicted. Because $\delta_k$ is conflicted $\varphi^v_s(\delta_k)$ hits a singularity for some $ 0\leq s<M$. Notice that because the flow is injective it sweeps out an area at most the subcomplex before first return. Flowing from a horizontal segment of length $s$ for times $t$ sweeps out an area of $st$. Since the flow of a point on $J$ stays in the complex from $0$ to $MC$, its horizontal component is at most $\frac{A}{CM}$. 

Now we can either form a saddle connection between $\varphi_{s(\delta_i)}^{ v}(\delta_i)$ and  $\varphi_{s(\delta_k)}^{ v}(\delta_k)$ satisfying the lemma or there exists a discontinuity $\delta_p$ between $\delta_i$ and $\delta_k$ that hits a singularity before $M$ so that we can form a saddle connection between $\varphi_{s(\delta_i)}^{ v}(\delta_i)$ and $\varphi_{s(\delta_p)}^{ v}(\delta_p)$. In particular $\delta_p$ could be the discontinuity in $J$ closest to $\delta_i$ which hits a singularity in time at most $M$.
\end{proof}
\begin{lem}\label{lem:spec chain} If $\delta_i$ and $\delta_j$ are conflicted discontinuities with every interval between them late then there
is a chain of saddle connections
\begin{itemize}
\item connecting $\varphi_{s(\delta_i)}^{ v}(\delta_i)$ to $\varphi_{s(\delta_j)}^{ v}(\delta_j)$
\item their union is a geodesic segment on the flat surface and
\item so that the sum of the vertical components of these saddle connections is at most $dM$ and the sum of their horizontal components is at most $\frac A {CM}$.
\end{itemize}
\end{lem}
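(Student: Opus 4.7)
My plan is to iterate Lemma \ref{lem:shrinkable} along the chain of late intervals lying between $\delta_i$ and $\delta_j$. Starting at $\delta_i$, an application of that lemma (with $\delta_k=\delta_j$ and $J$ the run of late intervals between them) produces either a single saddle connection directly from $\varphi_{s(\delta_i)}(\delta_i)$ to $\varphi_{s(\delta_j)}(\delta_j)$ with vertical component at most $M$ and horizontal component at most $A/(CM)$, or a saddle connection from $\varphi_{s(\delta_i)}(\delta_i)$ to $\varphi_{s(\delta_{p_1})}(\delta_{p_1})$, where $\delta_{p_1}$ is some intermediate discontinuity strictly between $\delta_i$ and $\delta_j$ satisfying $s_{p_1}<M$.

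In the latter case I iterate. Because $s_{p_1}<M$, the discontinuity $\delta_{p_1}$ plays the same role in the next application that a conflicted discontinuity does (the conclusion of Lemma \ref{lem:special early}), and the intervals strictly between $\delta_{p_1}$ and $\delta_j$ remain late. Replaying Lemma \ref{lem:shrinkable}'s argument then produces the next saddle connection in the chain, either closing up at $\delta_j$ or halting at a further intermediate $\delta_{p_2}$. The number of discontinuities of $T_\Gamma$ is bounded by the topological constant $d$, so the process terminates after at most $d$ steps and yields a chain $\delta_i=\delta_{p_0},\dots,\delta_{p_n}=\delta_j$ with $n\leq d$ whose consecutive vertices are connected by saddle connections $\sigma_1,\dots,\sigma_n$.

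The vertical estimate is immediate: each $\sigma_l$ has vertical component at most $M$ by Lemma \ref{lem:shrinkable}, so the sum is at most $dM$. For the horizontal estimate, let $J$ denote the union of late intervals strictly between $\delta_i$ and $\delta_j$; since every point of $J$ remains in the complex for vertical time at least $CM$, the area bound forces $|J|\leq A/(CM)$. The horizontal component of each $\sigma_l$ equals the horizontal width of the vertical band inside the complex it shadows, and these bands project to pairwise disjoint subintervals of $J$, so $\sum_l h(\sigma_l)\leq |J|\leq A/(CM)$, as required.

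The main technical obstacle is verifying that the chain concatenates into a bona fide geodesic segment, i.e., that at each intermediate singularity $\varphi_{s(\delta_{p_l})}(\delta_{p_l})$ the two incident saddle connections meet with angle $\pi$ on each side. The construction should make this automatic, since both incident saddle connections approach the singularity along vertical orbits issuing from opposite sides of $\delta_{p_l}$, and the local flat structure near the singularity forces the two approaches to align into a single straight segment. Even if the argument only delivers a piecewise-geodesic path, this still suffices for the application to Proposition \ref{prop:baby}, where the chain is concatenated with a second chain running in the opposite vertical direction to form the short curve to be shrunk by $g_t$.
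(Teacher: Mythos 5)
Your strategy — iterating the construction of Lemma \ref{lem:shrinkable} along the run of late intervals, using $s_{p_l}<M$ in place of the conflictedness hypothesis, and bounding the sum of verticals by $dM$ and the sum of horizontals by $|J|\leq A/(CM)$ — is essentially the same as the paper's, and those estimates are fine. The genuine gap is exactly where you flag the ``main technical obstacle'' and then wave it off. Your claim that ``the local flat structure near the singularity forces the two approaches to align into a single straight segment'' is not true. At a cone point of angle $2\pi(k+1)$ there is no reason the saddle connection from $\delta_{p_{l-1}}$ into $\varphi_{s(\delta_{p_l})}(\delta_{p_l})$ and the one out to $\delta_{p_{l+1}}$ should make an angle of at least $\pi$ on both sides; their directions are determined by the relative sizes of the $s$-values and the horizontal offsets, which are not constrained to align, and the two incoming segments are not vertical rays, so ``opposite sides of $\delta_{p_l}$'' does not help. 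Concatenation of saddle connections is a geodesic precisely when consecutive segments make angle $\geq\pi$ at each cone point (Proposition \ref{prop:strebel}), and that has to be checked or arranged, not assumed.

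The paper handles this with an explicit dichotomy: if the angle at the intermediate singularity is at least $\pi$, Strebel's proposition certifies the concatenation as a geodesic; if it is less than $\pi$, the paper shows there is in fact a single straight segment joining the two outer endpoints (sliding across the flat band swept out by the vertical flow over the relevant subinterval), and replaces the two-link chain by that one segment, then repeats. This keeps the chain a bona fide geodesic at every stage while preserving the vertical bound ($\leq M$ per link) and the horizontal bookkeeping. Your fallback — that a merely piecewise-geodesic path would still give the holonomy bound used downstream — is a plausible hedge, but it does not prove the lemma as stated, and downstream (in Lemma \ref{lem:non trivial}) the paper does use the geodesic structure of each edge when arguing about angles and the monotonicity of $\mathrm{Re}(\omega)$ along the cycle. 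So the geodesic-alignment step needs the case analysis; it is not automatic.
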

The proof uses the following well known fact (it follows from \cite[Theorem 8.1]{strebel}).
\begin{prop}[Strebel] 
\label{prop:strebel}
A geodesic on a flat surface has the following form. It is made of straight lines connecting singularities of the flat surface. At each singularity the consecutive line segments make an angle of at least $\pi$.
\end{prop}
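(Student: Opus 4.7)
The strategy is to package every piece of information we have into a single flow-box sitting over the segment $B\subset \Gamma'$ stretching from $\delta_i$ to $\delta_j$, then invoke Strebel's Proposition \ref{prop:strebel} to extract the desired chain as a genuine geodesic of the flat surface. The three bounds we need (number of segments, per-segment vertical size, total horizontal size) correspond to three features of that flow-box: it has at most $d$ interior singularities, it has height only $M$, and lateness of the intervals underneath forces its area to control its width.

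First I would partition the discontinuities in $\{\delta_i,\delta_{i+1},\ldots,\delta_j\}$ using the detachment hypothesis. Let $\delta_{i}=\delta_{q_0},\delta_{q_1},\ldots,\delta_{q_n}=\delta_j$ be exactly those with $s_{q_k}<M$; by $(M,N^c)$-detachedness every other discontinuity $\delta_p$ in this range has $s_p>N^cM>M$. Set $p_k:=\varphi^v_{s_{q_k}}(\delta_{q_k})\in \Sigma$, so $p_0$ and $p_n$ are the points we must connect by Lemma \ref{lem:special early}, and the $p_k$ for $0<k<n$ are the only other singularities reached from $B$ within time $M$. Note $n+1\le d_\Gamma$, which is bounded purely in terms of topology.

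Next I would construct the flow-box. Let $B\subset \Gamma'$ be the horizontal segment from $\delta_i$ to $\delta_j$, so $|B|=\sum_{i<p\le j}|I_p|$. Because each $I_p$ between $\delta_i$ and $\delta_j$ is late, no point of $B$ returns to $\Gamma'$ or leaves $\mathfrak{C}$ before time $CM>M$, and a standard injectivity argument (any collision of flowed points before time $M$ would produce a return time $\le M$, contradicting lateness) shows that $(x,t)\mapsto \varphi_t^v(x)$ isometrically embeds the natural domain above $B\times[0,M]$ as a flat region $F\subset \mathfrak{C}$ whose only cone singularities are precisely the $p_k$'s. The area of $F$ is at least $|B|\cdot CM$, but $F\subset \mathfrak{C}$ has area $\le A$, giving the key estimate
\[
|B|\le \frac{A}{CM}.
\]

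Now I would apply Strebel. Let $\Phi$ be a length-minimizing path from $p_0$ to $p_n$ inside $F$ in the homotopy class of the obvious rectangular path (down $\delta_i$, across $B$, up $\delta_j$). By Proposition \ref{prop:strebel}, $\Phi$ is a chain of saddle connections $\sigma_1,\ldots,\sigma_m$ on $(X,\omega)$ with intermediate vertices among $\{p_1,\ldots,p_{n-1}\}$, so $m\le n\le d$. Each vertex $p_k$ has vertical coordinate $s_{q_k}\in[0,M)$ in $F$, hence every $\sigma_\ell$ satisfies $v(\sigma_\ell)\le M$, giving $\sum_\ell v(\sigma_\ell)\le dM$.

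The main obstacle is the horizontal bound, which is where I expect to spend the real effort. I would argue that the horizontal coordinate (the projection to $B$) is monotone along $\Phi$: if it were not, then some $\sigma_\ell$ would have a horizontal component of the opposite sign to the overall journey from $p_0$ to $p_n$, and inside the flat rectangle $F$ one could replace the corresponding back-and-forth by a straight shortcut avoiding every $p_k$ on the reversed portion (since those $p_k$'s would still lie on the original side), contradicting length-minimality of $\Phi$. Once monotonicity is established, the horizontal components telescope:
\[
\sum_\ell h(\sigma_\ell)=|\pi_{\mathrm{hor}}(p_n)-\pi_{\mathrm{hor}}(p_0)|\le |B|\le \frac{A}{CM},
\]
completing the lemma. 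The delicate point in the monotonicity argument is ruling out forced back-tracking caused by the cone angle $>2\pi$ at some $p_k$, and I would handle this by developing $F$ in the plane and comparing the would-be geodesic to its straight shortcut, using that the cone points only permit deflection, not retraction, in a length-minimizing path.
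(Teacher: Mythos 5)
Your proposal does not address the statement it is supposed to prove. The statement is Proposition \ref{prop:strebel} itself: the structural description of a geodesic on a flat surface as a concatenation of straight segments whose corners occur only at singularities and subtend an angle of at least $\pi$ there. What you have written is instead an argument for Lemma \ref{lem:spec chain} (producing a chain of saddle connections between two conflicted discontinuities with only late intervals between them), and your argument explicitly \emph{invokes} Proposition \ref{prop:strebel} as a black box (``By Proposition \ref{prop:strebel}, $\Phi$ is a chain of saddle connections\dots''). As a proof of the proposition this is circular; as written it leaves the actual claim entirely untouched. For reference, the paper does not prove the proposition either: it records it as a well-known fact following from Theorem 8.1 of Strebel's book.

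If you wanted to actually prove the proposition, the argument is local and elementary: away from $\Sigma$ the translation structure provides isometric charts to the Euclidean plane, so a locally length-minimizing path is locally a straight line there, and hence any geodesic is a concatenation of straight segments whose corners can only sit at cone points; a maximal straight segment with both endpoints forced to be singular is by definition a saddle connection. At a cone point of angle $2\pi(k+1)>2\pi$, if two consecutive segments met at an angle strictly less than $\pi$ on one side, one could develop a small neighborhood of that side into the plane (the sector of angle less than $\pi$ embeds isometrically) and replace the corner by a straight chord, strictly shortening the path within its homotopy class with fixed endpoints. Hence the angle on each side of the corner is at least $\pi$. That local shortcut argument is the content you would need to supply.
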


\begin{proof}[Proof of Lemma \ref{lem:spec chain}] We assume $i<j$ and let $J$ be the interval between them. As in Lemma \ref{lem:shrinkable}, $|J|\leq \frac A{MC}$. Consider $\mathcal{D}'$ the set of  discontinuities between $\delta_i$ and $\delta_j$ that are flowed into before $\max\{s_i,s_j\}<M$. If this is empty the 
proof of the Lemma \ref{lem:shrinkable} implies this lemma. Otherwise let $p_1$ be the discontinuity immediately to the right of $\delta_i$. The proof of the Lemma \ref{lem:shrinkable} gives a short saddle connection between the singularity $\delta_i$ will hit and the one  $p_1$ will hit. Let $J_1$ be the subinterval of $J$ between $\delta_j$ and $p_1$. Now either the flow is continuous on $J_1$ until both hit singularities or there exists $p_2$ a discontinuity closest to $p_1$ that separates the segments before they hit the two singularities. Now if the angle between the saddle connections connecting $\delta_i$ to $p_1$ and $p_1$ to $p_2$ is at least $\pi$ then this is a geodesic segment (Proposition \ref{prop:strebel}). Otherwise there is a shorter geodesic connecting them. Indeed there is a straight line connecting $\delta_i$ and $p_2$: the horizontal component that the saddle connection from $p_1$ to $p_2$ is the length of a horizontal line segment that travels continuously. The line from $\delta_i$ towards $p_2$ hits a vertical translate of this segment and then follows in the flat region it sweeps out until it hits $p_2$. Repeat this process. Inductively each saddle connection has vertical component at most $M$. Their total horizontal component is $|J|$ and there are at most $d$ discontinuities.
 \end{proof}

\subsection{An equivalence relation on discontinuities and a graph to build closed curves}
\label{subsec:relation}
Define a relation on $\mathcal{D}_{\Gamma}$ by $p\sim q$ if $T(p^{\pm})=T(q^{\mp})$ and consider the equivalence relation $\sim$ it generates. Define a graph as follows. The vertices of the graphs are equivalence classes of elements of $\mathcal{D}_\Gamma$ at least one of whose elements are conflicted. The vertices $[p]$, $[q]$ are connected by an edge if there exists $\delta \in [p]$ and $\delta'\in [q]$ which are conflicted and have no conflicted discontinuities between them. Note that if two vertices are connected by such an edge, Lemma \ref{lem:spec chain} says a certain type of a chain of saddle connections exists between $\varphi_{s(\delta)}^{ v}(\delta)$ and $\varphi_{s(\delta')}^{ v}(\delta')$. We allow for edges to connect a vertex to itself and for there to be multiple edges between a pair of vertices.
\begin{rem}
\label{rem:equiv rel} 
Discontinuities of the IET arise as pre-images of cone points and our equivalence identifies discontinuities that come from the same cone point.
\end{rem}
\begin{lem}Every vertex in our graph has valence at least 2.
\end{lem}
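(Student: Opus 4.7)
The plan is to count edges at the vertex $[p]$ via a half-edge argument: I will show that each conflicted discontinuity in an equivalence class contributes exactly $2$ to the valence of that class. Since $[p]$ is a vertex, it contains by definition at least one conflicted discontinuity $\delta$, so this already suffices to give valence at least $2$ at $[p]$.

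Fix such a conflicted $\delta \in [p]$, with adjacent intervals $I_i$ and $I_{i+1}$ of opposite status (one early, one late). Walking from $\delta$ along $\Gamma'$ to the right through intervals whose status matches that of $I_{i+1}$, the status stays constant until the next status change, i.e., until the next conflicted discontinuity $\delta^+$. Symmetrically, walking left produces $\delta^-$. By construction, the pairs $(\delta,\delta^+)$ and $(\delta,\delta^-)$ satisfy the defining condition of an edge: both $\delta,\delta^\pm$ are conflicted, and no conflicted discontinuity lies strictly between them. These two pairs produce two edges incident to $[p]$. In the degenerate case in which only one conflicted discontinuity exists on the relevant loop and so $\delta^+ = \delta^- = \delta$, the two sides close up into a single self-loop at $[p]$, which by the standard multigraph convention contributes $2$ to the valence.

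The main obstacle is the assertion that $\delta^+$ and $\delta^-$ always exist. The plan is to establish this by a parity argument: the boundary $\Gamma = \partial \mathfrak{C}$ is a finite disjoint union of topological circles (boundary components of the $2$-complex $\mathfrak{C}$), so the intervals of continuity of $T:\Gamma' \to \Gamma'$ inherit a cyclic structure along each such boundary circle. Along any such closed loop, the total number of status changes must be even, so the presence of $\delta$ as one such change forces the existence of another change on each side.

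The delicate point, which I would need to verify carefully, is that the cyclic structure is preserved once we restrict from $\Gamma$ to the inward-flowing subcollection $\Gamma'$, possibly disconnecting the picture. Here the convention established earlier, that one endpoint of each saddle connection in $\Gamma'$ contributes $0$ to $H_\Gamma$, should be used to track how the combinatorial structure of $T$ glues consecutive saddle connections of $\Gamma'$ along each boundary loop, so that the parity argument still applies on the relevant cyclic structure.
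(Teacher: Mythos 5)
The paper's proof does not walk along $\Gamma'$ at all; it follows the equivalence relation $\sim$, whose classes record discontinuities flowing to the \emph{same cone point}. Since a cone point has a finite cyclic family of sectors, the $\sim$-identifications form a closed cycle, and tracking the early/late status around that cycle (starting from the early side of $\delta$) must eventually return to a late status before closing up, producing a second conflicted discontinuity in $[\delta]$. Your proof instead proposes to walk linearly along $\Gamma'$ in each direction from $\delta$ and invoke a parity count of status changes along the boundary loops of $\mathfrak{C}$. These are genuinely different mechanisms, and the one you chose has a gap where the paper's does not.

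The gap is precisely the ``delicate point'' you flag and do not close. The parity argument requires the domain of $T$ to be a finite union of closed circles, but $\Gamma'\varsubsetneq \Gamma=\partial\mathfrak{C}$ is, by the paper's own Convention, \emph{only} the subcollection of boundary saddle connections that flow into the interior of $\mathfrak{C}$. That is, $\Gamma'$ is a union of segments, not loops, and a segment has two free endpoints. Those endpoint discontinuities have only one adjacent interval of continuity, so they cannot be conflicted; a walk rightward (or leftward) from $\delta$ can therefore terminate at a saddle-connection endpoint before ever meeting another conflicted discontinuity, and $\delta^{\pm}$ need not exist. There is no parity constraint forcing an even number of status changes on an interval whose two ends may carry unrelated statuses. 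The Convention you cite (one endpoint of each $\gamma\in\Gamma'$ contributes $0$ to $H_\Gamma$) is a bookkeeping device for $H_\Gamma$; it does not glue segments of $\Gamma'$ into loops, and it does not by itself recover the cyclic structure your argument needs. The paper sidesteps all of this by working with the cone-point cycle of $\sim$, which is closed regardless of how $\Gamma'$ sits inside $\Gamma$. To repair your argument along its current lines you would have to show that the combinatorics of $T$, together with the cone-point identifications, closes each $\Gamma'$-segment into a cycle — but once you do that you have essentially reproduced the paper's use of the $\sim$-cycle, so the detour through boundary loops of $\mathfrak{C}$ is not buying you anything.
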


\begin{proof}Let $\delta$ be a conflicted discontinuity. One side is early and the other side is late. Follow the identifications from the relation $\sim$ for the side that is early until it becomes late again. Since the relation forms a closed cycle this has to happen. When it does we have another conflicted discontinuity for $T$. It has to be different because we must become late again at least one step before repeating the already given identification. So all vertices have valence at least 2.
\end{proof}
\begin{lem}
\label{lem:cycle}
A finite graph so that each vertex has valence at least 2 contains a cycle.
\end{lem}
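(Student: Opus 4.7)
The plan is to prove this by a direct walk-extension argument, exploiting finiteness of the vertex set. Starting at any vertex $v_0$, I pick an incident edge $e_1$; if $e_1$ is a self-loop, I am immediately done, and otherwise I let $v_1$ be its other endpoint. Since $v_1$ has valence at least $2$, I can pick an edge $e_2$ incident to $v_1$ distinct from $e_1$; if $e_2$ is a self-loop at $v_1$, it is a $1$-cycle, and if $e_2$ is a second edge between $v_0$ and $v_1$, then $e_1\cup e_2$ is a $2$-cycle. Otherwise $e_2$ leads to a new vertex $v_2$, and I continue inductively, at each stage leaving $v_k$ along an edge different from the one used to enter it. Valence at least $2$ guarantees that this extension is always possible.

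Because the walk never terminates but the vertex set is finite, there is a smallest index $k$ for which $v_k=v_j$ for some earlier $j<k$. The sequence $v_j, v_{j+1}, \ldots, v_k$ is then a closed walk with all intermediate vertices distinct, which is a cycle in the usual sense. That concludes the argument.

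The only subtlety—more of a bookkeeping point than a genuine obstacle—is to reconcile the notion of cycle with the fact that the graph built in the previous subsection allows self-loops and parallel edges. This is harmless for our later use, since a self-loop at $[p]$ still encodes a closed chain of saddle connections via Lemma \ref{lem:spec chain}, and two parallel edges between $[p]$ and $[q]$ concatenate into such a closed chain as well. Once one agrees to count length-$1$ and length-$2$ returns as genuine cycles, the walk construction handles all cases uniformly, and no further input is needed.
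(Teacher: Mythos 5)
The paper states Lemma~\ref{lem:cycle} without proof, treating it as a standard graph-theoretic fact, so there is no ``paper approach'' to compare against. Your walk-extension argument is correct: starting anywhere, valence $\geq 2$ lets you always leave a vertex by an edge other than the one you entered along, and finiteness forces a repeated vertex; taking the earliest repetition gives a closed walk with distinct interior vertices and distinct edges, i.e.\ a cycle. You are also right to flag the multigraph subtleties explicitly, since the graph built from conflicted discontinuities is allowed to have self-loops and parallel edges, and both yield legitimate ``short'' cycles that feed into Lemma~\ref{lem:non trivial}.
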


\begin{lem}\label{lem:non trivial} Consider the collection of saddle connections corresponding to a cycle in our graph. They form a closed curve that is not homotopically trivial.
\end{lem}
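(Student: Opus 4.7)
I start by concatenating the geodesic chains of saddle connections provided by Lemma~\ref{lem:spec chain} along the cyclic order of edges. If the cycle is $v_1\to v_2\to\cdots\to v_k\to v_1$ with edges $e_i\colon v_i\to v_{i+1}$, each $e_i$ is marked by a pair of conflicted discontinuities $\delta_i\in v_i$, $\eta_i\in v_{i+1}$, and Lemma~\ref{lem:spec chain} realizes $e_i$ as a geodesic chain $\beta_i$ from the cone point $\varphi^v_{s(\delta_i)}(\delta_i)$ to $\varphi^v_{s(\eta_i)}(\eta_i)$. By Remark~\ref{rem:equiv rel} together with the definition of $\sim$, all discontinuities inside a given class $v_{i+1}$ flow forward into the \emph{same} cone point of $\Sigma$, so $\beta_i$ terminates at the same point where $\beta_{i+1}$ starts. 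Hence $\gamma := \beta_1\cdot\beta_2\cdots\beta_k$ is a well-defined closed piecewise-geodesic curve on $(X,\omega)$. After replacing the cycle with a simple sub-cycle if necessary (invoking Lemma~\ref{lem:cycle} on a non-simple sub-diagram), I may assume the cycle is simple.

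\textbf{Non-triviality.} I would argue by contradiction. Assume $\gamma$ is null-homotopic; then $\gamma$ bounds a (possibly immersed) disk $D\subset X$. Apply the Gauss--Bonnet identity to $D$ with the restricted flat metric. Since $X\setminus\Sigma$ is flat, the curvature of $D$ is concentrated at cone points of $\Sigma\cap D^\circ$, each contributing $2\pi-\theta_c\leq -2\pi$ to the Euler identity $2\pi \chi(D) = 2\pi$. At each intermediate Strebel corner along some $\beta_i$, the interior angle is at least $\pi$ by Proposition~\ref{prop:strebel}, so its turning contribution $\pi-\alpha_p$ is $\leq 0$. The only possibly positive terms are those at the cycle vertices $c_1,\dots,c_k$. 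The crux is to show that the interior angle $\alpha_{c_i}$ of $D$ at each $c_i$ is also at least $\pi$: at a cycle vertex, the incoming chain $\beta_{i-1}$ and the outgoing chain $\beta_i$ are specified by the conflicted discontinuities $\eta_{i-1}$ and $\delta_i$, which lie in the same class $v_i$ but correspond to distinct prongs at $c_i$, and the ``early versus late'' dichotomy, combined with how $\sim$ identifies them, should force the two chains to leave $c_i$ into $D$ along prongs whose angular separation on the $D$-side is at least $\pi$. Granted this, every Gauss--Bonnet term is non-positive, contradicting $\chi(D)=1$.

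\textbf{Main obstacle.} The heart of the difficulty is this angular bookkeeping at the cycle vertices, i.e., translating the combinatorial data (the ``conflicted'' condition and the identification $\sim$) into a definite lower bound on the interior angle of $D$ at each $c_i$. If the prong-level analysis turns out to be awkward, a cleaner backup is to pass to the universal cover $\widetilde X$: if $\gamma$ were null-homotopic, its lift $\widetilde\gamma$ would be a closed loop, and by tracking the sequence of lifts of $c_1,\dots,c_k$ along $\widetilde\gamma$ and using that upstairs $\sim$ cannot identify distinct deck-translates of the same downstairs cone point, one derives a contradiction with the closedness of $\widetilde\gamma$. Either route hinges on the same combinatorial input: that consecutive edges meeting at a cycle vertex genuinely detect different prongs of the same cone point, which is exactly what was built into the definition of the graph.
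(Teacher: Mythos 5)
Your proof has a genuine gap, and it is exactly where you flag it yourself: the claim that the interior angle of $D$ at each cycle vertex $c_i$ is at least $\pi$. The paper's own proof explicitly contemplates the contrary case --- ``if the angle between $\xi_{i-1}$ and $\xi_i$ at $p_i$ is less than $\pi$ then either $k^1_{i-1}=k^2_i$ or $\ell^1_{i-1}=\ell^2_i$'' --- and the footnote only rules out one of the two ways this can happen (namely $k^1_{i-1}=k^2_i$), leaving $\ell^1_{i-1}=\ell^2_i$ as a live possibility. So the concatenated closed curve is in general not already a local geodesic, and the Gauss--Bonnet accounting you set up (which needs every corner to turn by at most $0$) cannot be applied to it as is. Moreover, even at a vertex where the angle on one side is $\geq \pi$, nothing in the combinatorics of the graph tells you which side of $\gamma$ a hypothetical disk $D$ would lie on, so you cannot assert the $D$-side angle is the large one.

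The paper closes this gap with a device your argument is missing: a monotonicity statement for $\Re(\omega)$. When the angle at $p_i$ is $<\pi$, the identification via $\sim$ forces the sign of $\Re(\omega)$ along the outgoing chain to agree with its sign along the incoming chain, so the horizontal holonomy across the corner does not cancel. One then replaces $\xi_{i-1}\cup\xi_i$ by the geodesic in its fixed-endpoint homotopy class; by this monotonicity the replacement still has positive length, and iterating produces a closed geodesic all of whose corner angles are $\geq \pi$ with at least one segment of positive length. It is \emph{that} curve which is shown non-trivial (by the Gauss--Bonnet/nonpositive-curvature observation you cite). Your ``universal cover'' backup is too vague to assess: nothing in ``$\sim$ cannot identify distinct deck-translates'' obviously prevents the lift from closing up, and filling that in ultimately requires an input like the $\Re(\omega)$ monotonicity, which you have not supplied. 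In short, the missing idea is the sign-preservation of $\Re(\omega)$ across small-angle vertices, which is what lets the curve be straightened without collapsing.
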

\begin{proof}Consider a cycle in our graph (with edges) $\xi_1,...,\xi_n$ and let $p_1,...,p_n,p_{n+1}=p_1$ the vertices (which are zeros of $\omega$). For each $\xi_i$ its endpoints $p_{i},p_{i+1}$ correspond to discontinuities of $T$, $\delta_{k^1_i}$, $\delta_{k^2_{i+1}}$ and discontinuities of $T^{-1}$, $\delta_{\ell^1_i}', \delta_{\ell^2_{i+1}}$. Now consider the edges $\xi_{i-1}$, $\xi_{i}$ which meet at $p_i$. Because of our equivalence relation, these saddle connections meet at the same cone point (Remark \ref{rem:equiv rel}).
 If the angles between $\xi_{i-1}$ and $\xi_i$ at $p$ are at least $\pi$, then our cycle is a geodesic which is not homotopically trivial. We now examine the other case. 
 If the angle between $\xi_{i-1}$ and $\xi_i$ at $p_i$ is less than $\pi$ we will use that either $k^1_{i-1}=k^2_i$ or $\ell^1_{i-1}=\ell^2_{i}$. Now if Re$(\omega)\geq 0$ as $\xi_{i-1}$ went towards $p_i$ we will also have Re$(\omega)>0$ as $\xi_i$ goes away from $p_i$ (by the previous sentence). 
  Similarly for Re$(\omega)<0$. The geodesic in the fixed endpoint homotopy class of $\xi_{i-1}\cup \xi_i$ needs to preserve  the total increase in Re$(\omega)$ from the start of $\xi_{i-1}$ to the end of $\xi_i$. We iterate this, and end up with a geodesic segment made up of saddle connections that are separated from each other by angle at least $\pi$. Each saddle connection in our geodesic has the same sign of  change in Re$(\omega)$ as  unions of saddle connection giving edges in our graph.  Since this is a closed geodesic and one of the segments has positive length, it is not homotopically trivial.
\end{proof}

\begin{proof}[Proof of Proposition \ref{prop:baby}] 
  Suppose there exists a trajectory starting in $\mathfrak{C}$ of length $N\frac{A}{h}>\frac{A}{h}$ which is contained in $\mathfrak{C}$. By Lemma \ref{lem:we weird} the set $H_\Gamma\cup L_\Gamma$ is $(M,N^c)$-detached for some $M$ and $c$ satisfying $MN^c < N\frac{A}{h}$. Lemmas \ref{lem:special early} - \ref{lem:cycle} imply the existence of a cycle, every edge of which represents a geodesic isometric to a chain of saddle connections with horizontal component at most $\frac{A}{MN^c}$ and vertical component at most $dM$ (Lemma \ref{lem:spec chain}). So we have a path with total holonomy at most $(d\frac{A}{MN^c},d^2M)$. Choose a geodesic in the homotopy class of this path which is nontrivial by Lemma \ref{lem:non trivial}.

Its holonomy will be smaller and it is a closed curve. So we have a closed curve with holonomy $(x,y)$. Its length can be shrunk to $\sqrt{2}\sqrt{xy}$ by the matrix 
$\begin{pmatrix} \frac{\sqrt{y}}{\sqrt{x}} & 0\\
0 & \frac{\sqrt{x}}{\sqrt{y}}
\end{pmatrix}$ in time less than 
$$\frac{1}{2}\log \left| \cot\left(d\frac{A}{CM},d^2M\right)\right| = \frac{1}{2}\log\left(\frac{dM^2N^c}{A}\right),$$
which, combined with the fact that  $M, MN^c \leq N\frac{A}{h}$ (Lemma \ref{lem:we weird}), yields the result.
\end{proof}

\section{Induction setup}
The main result of this section, Proposition \ref{prop:induct}, shows that closed curves which will be shortest at some point in the near future cannot be contained in a subcomplex with small perimeter and not too small area. Moreover, it proves an effective version of this statement.
\begin{prop}
\label{prop:induct} 
Let $(X,\omega)$ be a flat surface for which there exists $t_0$ and $\epsilon> 0$ so that for all $t>t_0$ we have that $\delta_t(\omega)>t^{-\frac 1 2 -\epsilon}$. Given any $C>0$, let $\rho = \frac{2+C}{c} + 3C +10$ (where $c$ is the constant from Proposition \ref{prop:baby}). There exists a $p$ such that if $s>p$  and we have 
\begin{itemize}
\item a complex $\mathfrak{C}$ of $g_s\omega$ built from saddle connections whose length on $g_s\omega$ is between $s^{-\frac 1 2 -\epsilon}$ and $s^{-\frac 1 2 +C\epsilon}$ and so that the complement of $\mathfrak{C}$ is not homotopically trivial and
\item  $\delta_\ell(\omega)<\ell^{-\frac 1 2 +\epsilon}$ for all $\ell \in [s,s+\log(s^{2\rho\epsilon})]$
\end{itemize}
 then $\Gamma_{s+\log(s^{2\rho \epsilon})}(\omega)\not\subset\mathfrak{C}$.
\end{prop}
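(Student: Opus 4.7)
My plan is proof by contradiction: assume $\Gamma_{s+T}(\omega)\subset\mathfrak{C}$, where $T=2\rho\epsilon\log s$, and derive a contradiction by applying Proposition~\ref{prop:baby} to $\mathfrak{C}$ viewed as an $(A,h)$-complex of $g_s\omega$.

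The first task is to bound $A$ and $h$. Since $\mathfrak{C}$ has at most $\mathcal{M}$ boundary saddle connections (Remark~\ref{rmk:mbound}), each of length at most $s^{-1/2+C\epsilon}$, the area is $A\le C_1 s^{-1+2C\epsilon}$. For the horizontal boundary, since the complement of $\mathfrak{C}$ is not homotopically trivial, at least one boundary component of $\mathfrak{C}$ is a non-contractible closed loop of saddle connections, hence of total length at least $\delta_\ell(\omega)\ge\ell^{-1/2-\epsilon}$ on $g_\ell\omega$ for every $\ell\in[s,s+T]$. Using $\sum_\gamma|\gamma|_{g_\ell\omega}\le e^{\ell-s}h+e^{-(\ell-s)}v$ where $v\le\mathcal{M}s^{-1/2+C\epsilon}$ is the boundary's total vertical component, and evaluating at $\ell=s+T$, the condition $\rho>(C+1)/2$ rules out the $v$-dominated case and forces $h\gtrsim e^{-T}(s+T)^{-1/2-\epsilon}\ge c_0 s^{-1/2-\epsilon-2\rho\epsilon}$.

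Next I apply Proposition~\ref{prop:baby} with $N=s^{(C+1)\epsilon/c+\eta}$ for a small $\eta>0$. Alternative~(2) would produce a closed geodesic of length at most $\sqrt{2}\,d^{3/2}N^{-c}\sqrt{A}\lesssim s^{-1/2-\epsilon-c\eta}$ on $g_{s+t}\omega$ for some $t\le\tfrac12\log(dN^2A/h^2)=O(\log s)$. Because $t=O(\log s)$ is polynomially negligible in $s$, one has $(s+t)^{-1/2-\epsilon}\sim s^{-1/2-\epsilon}$, so for $s$ large enough the produced curve's length strictly violates the systole lower bound $\delta_{s+t}(\omega)\ge(s+t)^{-1/2-\epsilon}$, a contradiction. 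Hence alternative~(1) must hold: every vertical trajectory of length at least $NA/h$ starting in $\mathfrak{C}$ exits on $g_s\omega$.

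The final step, which I expect to be the main obstacle, is to use this trajectory restriction to contradict $\Gamma_{s+T}(\omega)\subset\mathfrak{C}$. The key observation is that each saddle connection $\gamma\in\Gamma_{s+T}(\omega)$ is, on $g_s\omega$, nearly vertical: its horizontal component is at most $e^{-T}(s+T)^{-1/2+\epsilon}\ll s^{-1/2-\epsilon}$, while $|\gamma|_{g_s\omega}\ge\delta_s(\omega)\ge s^{-1/2-\epsilon}$, forcing its vertical component on $g_s\omega$ to be at least $s^{-1/2-\epsilon}/\sqrt 2$. The plan is then to combine the non-contractibility of $\Gamma_{s+T}(\omega)$ in $X$ with the hypothesis that the complement of $\mathfrak{C}$ is not homotopically trivial, to produce an honest vertical trajectory in $\mathfrak{C}$ of length exceeding $NA/h$, via a first-return analysis to $\partial\mathfrak{C}$ in the spirit of Lemmas~\ref{lem:shrinkable}--\ref{lem:cycle}, yielding the contradiction with alternative~(1). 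The generous buffer in $\rho=(2+C)/c+2C+5$ appears designed precisely to make this last estimate work with room to spare.
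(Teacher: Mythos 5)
Your overall structure is correct and matches the paper: assume $\Gamma_{s+T}(\omega)\subset\mathfrak{C}$, bound the area $A$ and horizontal boundary $h$ of $\mathfrak{C}$ (your bounds are in the spirit of the paper's Lemmas~\ref{lem:area} and~\ref{lem:hor}), and feed these into Proposition~\ref{prop:baby} so that alternative~(2), the existence of a shrinkable closed curve, collides with the systole lower bound $\delta_t(\omega)>t^{-1/2-\epsilon}$. Your reordering --- ruling out~(2) first, concluding~(1), then showing~(1) must also fail --- is logically the same contradiction the paper runs.

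The genuine gap is exactly the one you flag as ``the main obstacle,'' and your proposed fix would not work. You correctly observe that $\Gamma_{s+T}$ must have large vertical component on $g_s\omega$ (this is the paper's Lemma~\ref{lem:time big}), and you want to turn the containment $\Gamma_{s+T}\subset\mathfrak{C}$ plus this vertical component into an honest vertical trajectory of length $\geq NA/h$ staying inside $\mathfrak{C}$. But you suggest doing this by a ``first-return analysis in the spirit of Lemmas~\ref{lem:shrinkable}--\ref{lem:cycle}.'' Those lemmas run in the opposite direction: they take a long vertical trajectory confined to a complex as \emph{input} and produce a short closed curve, which is already packaged inside Proposition~\ref{prop:baby}; they do not manufacture a long confined trajectory from a short closed curve. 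The paper fills this gap with a separate shadowing mechanism (Definition~\ref{def:shadow} and Lemma~\ref{lem:sys to traj inside}): one follows the systole curve $\Gamma$ with a nearly vertical ``shadowing'' trajectory, shows via a counting/area argument (the Sublemma on overlapping shadowing triangles) that any one point of $\partial\mathfrak{C}$ is crossed at most twice, and concludes that some shadowing segment of length at least $\tfrac{1}{D}v(\Gamma)-2v(\partial\mathfrak{C})$ stays inside $\mathfrak{C}$. This is where the constant $D$ and the second largeness condition on $p$ ($\mathcal{M}< s^{(\rho-C-2)\epsilon}/8D$) enter, ensuring the $2v(\partial\mathfrak{C})$ loss does not swamp the main term. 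Without this shadowing argument, or something doing equivalent work, your proof does not close, so this is a missing idea rather than a routine detail.
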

We first remark that although the shortest curve on $g_{s+\log(s^{2\rho \epsilon})}\omega$ may not be unique, the statement is true for any such curve.

Secondly, we remark that the proposition above is trivial if $\mathfrak{C}$ has no interior, so we assume for the remainder of this section that $\mathfrak{C}$ has interior. Since systoles are not homotopically trivial, we also assume that the interior of the complex is not homotopically trivial.

The proof of this proposition has 3 parts. First, Lemmas \ref{lem:area} and \ref{lem:hor} use the assumptions of the proposition to obtain some bounds on the subcomplex's area and perimeter so we have the assumptions of Proposition \ref{prop:baby}. Second, we relate long (at least compared to $\sqrt{\text{area}}$) simple closed curves contained in a subcomplex to long vertical trajectories contained in a subcomplex (\S \ref{sec:shadow}). The last part is showing that (\ref{eqn:SystLogLaw}) implies that somewhat distant future systoles give long enough vertical trajectories (Lemma \ref{lem:time big}) to apply Proposition \ref{prop:baby} to show the vertical trajectory given in the second part would have to leave the subcomplex, thus completing the proof of Proposition \ref{prop:induct}.
\begin{lem}
\label{lem:area}
Let $(X,\omega)$ be a flat surface so that there exists $t_0$ and $\epsilon\geq 0$ so that for all $t>t_0$ we have that $\delta_t(\omega)>t^{-\frac 1 2 -\epsilon}$. Suppose there exists a $C$ such that for
 $s>t_0$ we have $\mathfrak{C}$, a level $m$ subcomplex  of $g_s\omega$ built from saddle connections whose length on $g_s\omega$ is less than $s^{-\frac 1 2 +C\epsilon}$. Then the area of the subcomplex is at most $m^2s^{-1 +2C\epsilon}$.
\end{lem}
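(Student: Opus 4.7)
The plan is to triangulate $\mathfrak{C}$ by its own $m$ saddle connections, to bound the area of each resulting flat triangle in terms of the common length bound $L := s^{-1/2+C\epsilon}$, and then to count the triangles via an edge-face incidence argument.

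By the definition of a complex, every triangle formed by three saddle connections in $\mathfrak{C}$ bounds a region already contained in $\mathfrak{C}$, so the interior of $\mathfrak{C}$ is tiled by flat triangles $T_1,\dots,T_k$ whose edges are among the $m$ saddle connections. Each $T_j$ is a Euclidean triangle all of whose sides have length at most $L$, so representing two of its sides as vectors $u,v$ in a flat chart gives
\[
\mathrm{area}(T_j) \;=\; \tfrac{1}{2}|u\times v| \;\leq\; \tfrac{1}{2}|u|\,|v| \;\leq\; \tfrac{L^{2}}{2}.
\]
To bound the number of triangles, observe that every triangle contributes 3 edges to the 1-skeleton of $\mathfrak{C}$ and each edge is shared by at most two triangles of the tiling, so $3k\leq 2m$ and hence $k\leq \tfrac{2m}{3}$.

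Putting these bounds together,
\[
\mathrm{area}(\mathfrak{C}) \;\leq\; k\cdot \tfrac{L^{2}}{2} \;\leq\; \tfrac{m}{3}\,s^{-1+2C\epsilon} \;\leq\; m^{2}\,s^{-1+2C\epsilon},
\]
the last inequality using $m\geq 1$. This is exactly the claim (with a factor $1/3$ to spare).

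The main obstacle is justifying the first step rigorously, i.e. that $\mathfrak{C}$ is literally triangulated by the $m$ given saddle connections and contains no further uncounted polygonal pieces. The paper's complex-building convention, together with the fact that the complexes relevant to the induction are produced by iterative application of Lemma \ref{lem:combine}, should make this automatic. Note that the systole hypothesis $\delta_t(\omega)>t^{-1/2-\epsilon}$ is inherited from the setup of Proposition \ref{prop:induct} but plays no direct role in the area bound above; it enters only in controlling which complexes arise in the inductive framework.
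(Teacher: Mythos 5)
Your proof is correct and its core is the same elementary idea as the paper's, though you flesh it out in a slightly different way. The paper's entire proof is the one-liner ``It is a polygon with total perimeter $ms^{-\frac 1 2 +C\epsilon}$,'' which implicitly invokes the isoperimetric inequality (area $\leq$ perimeter$^2$) for the region enclosed by the saddle connections. You instead decompose the interior into Euclidean triangles, bound each triangle's area by $\frac12 L^2$, and count triangles by the incidence relation $3k\le 2m$; this actually yields the sharper bound $\frac{m}{3}L^2$, which of course still implies $m^2L^2$. Both routes rest on the same structural fact, which you correctly flag as the load-bearing step: that the 2-dimensional part of a complex consists precisely of the simply connected regions bounded by triangles of its saddle connections (no cylinders or other uncounted pieces). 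This is the intended reading of the definition of ``complex'' — the interior is generated by the triangle-filling rule — and for the triangle pieces one can verify via Gauss--Bonnet that they carry no interior cone points, so they really are Euclidean triangles. Your closing remark is also right: the systole hypothesis $\delta_t(\omega)>t^{-1/2-\epsilon}$ is inert here and appears only to match the setting of Proposition~\ref{prop:induct}.
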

\begin{proof} It is a polygon with total perimeter $ms^{-\frac 1 2 +C\epsilon}$.
\end{proof} 
\begin{lem}
\label{lem:hor}
Let $(X,\omega)$ be a flat surface so that there exists $t_0$ and $\epsilon\geq 0$ so that for all $t>t_0$ we have that $\delta_t(\omega)>t^{-\frac 1 2 -\epsilon}$. Suppose there exists a $C$ such that for
 $s>t_0$ we have $\mathfrak{C}$, a level $m$ subcomplex  of $g_s\omega$ built from saddle connections whose length on $g_s\omega$ is between $s^{-\frac 1 2 -\epsilon}$ and $s^{-\frac 1 2 +C\epsilon}$  and so that $\mathfrak{C}^c$ is not homotopically trivial. Then
on $g_{s+\log(ms^{(C+3)\epsilon})}\omega$ the sum of the horizontal components of the saddle connections that make up the boundary $\partial\mathfrak{C}$ is at least $s^{-\frac 1 2 -3\epsilon}$.
\end{lem}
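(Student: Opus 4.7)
The plan is to use the systole lower bound to force a lower bound on the total horizontal component of $\partial\mathfrak{C}$ on $g_{s'}\omega$, where $s' := s + \log(ms^{(C+3)\epsilon})$. Let $H_t(\gamma)$ and $V_t(\gamma)$ denote the horizontal and vertical components of a saddle connection $\gamma$ measured on $g_t\omega$, so $H_{s'}(\gamma) = e^{s'-s}H_s(\gamma)$ and $V_{s'}(\gamma) = e^{-(s'-s)}V_s(\gamma)$ with $e^{s'-s} = ms^{(C+3)\epsilon}$.

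First I would estimate the total vertical component on $g_{s'}\omega$. Each of the at most $m$ boundary saddle connections has length on $g_s\omega$ at most $s^{-1/2+C\epsilon}$, hence $V_s(\gamma) \leq s^{-1/2+C\epsilon}$. Dividing by $e^{s'-s}$ and summing over the $\leq m$ boundary saddle connections gives
\[
\sum_{\gamma\in\partial\mathfrak{C}} V_{s'}(\gamma) \;\leq\; m \cdot \frac{s^{-1/2+C\epsilon}}{m s^{(C+3)\epsilon}} \;=\; s^{-1/2-3\epsilon}.
\]

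Next I would extract from $\partial\mathfrak{C}$ a sub-collection of saddle connections $\Gamma^*$ representing a non-contractible closed curve of $X$. By the standing assumption just after Proposition~\ref{prop:induct}, both $\mathfrak{C}^\circ$ and $\mathfrak{C}^c$ are homotopically non-trivial. I would fix a component $U$ of $\mathfrak{C}^c$ which is not a disk and argue by contradiction: if every simple cycle of saddle connections bounding $U$ were null-homotopic in $X$, each would bound a unique disk in $X$; capping off the boundary circles of $U$ with these disks yields a closed connected $2$-submanifold of $X$, which, since $X$ is a closed connected surface, must equal all of $X$. This forces $\mathfrak{C}$ to be a disjoint union of disks, contradicting the non-triviality of $\mathfrak{C}^\circ$. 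Hence some simple cycle in $\partial\mathfrak{C}$ is non-trivial in $X$, and I take $\Gamma^*$ to be the corresponding sub-collection.

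The finish is by contradiction. Suppose $\sum_{\gamma\in\partial\mathfrak{C}}H_{s'}(\gamma) < s^{-1/2-3\epsilon}$. Then the length of $\Gamma^*$ on $g_{s'}\omega$ is at most $H_{s'}(\partial\mathfrak{C}) + V_{s'}(\partial\mathfrak{C}) < 2 s^{-1/2-3\epsilon}$. On the other hand, by the definition of the systole and the hypothesis $\delta_t(\omega) > t^{-1/2-\epsilon}$, we have $|\Gamma^*|_{g_{s'}\omega} \geq \delta_{s'}(\omega) > (s')^{-1/2-\epsilon}$. Since $s'-s = O(\log s)$ (with constant depending only on the stratum, by Remark~\ref{rmk:mbound}), the quantity $(s')^{-1/2-\epsilon}$ is comparable to $s^{-1/2-\epsilon}$, which dominates $2s^{-1/2-3\epsilon}$ for all sufficiently large $s$. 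This is the desired contradiction, so $H_{s'}(\partial\mathfrak{C}) \geq s^{-1/2-3\epsilon}$. I expect the main obstacle to be the topological step extracting a non-contractible cycle from $\partial\mathfrak{C}$ (which essentially requires both $\mathfrak{C}^\circ$ and $\mathfrak{C}^c$ to carry genuine topology); the Teichm\"uller flow estimates and the systole comparison are routine bookkeeping.
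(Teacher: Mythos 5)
Your proof follows the same route as the paper's: bound the vertical holonomy of $\partial\mathfrak{C}$ at time $s' = s + \log(ms^{(C+3)\epsilon})$ by $s^{-1/2-3\epsilon}$, extract a non-contractible closed curve from $\partial\mathfrak{C}$ using the topological non-triviality of both $\mathfrak{C}$ and $\mathfrak{C}^c$, and invoke the systole lower bound $\delta_{s'}(\omega) > (s')^{-1/2-\epsilon}$ to force the horizontal holonomy to dominate. Your write-up is somewhat more explicit than the paper's (which compresses the topological step into a sentence and is a bit loose, writing $s^{-1/2-\epsilon}$ where $(s')^{-1/2-\epsilon}$ is the correct lower bound and not flagging that the final comparison needs $s$ large), but the essential argument is identical.
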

\begin{proof}By our assumptions the boundary curve has vertical holonomy at most $ms^{-\frac 1 2 +C\epsilon}$. The boundary contains a simple closed curve that  is not homotopically trivial because by assumption the complement of $\mathfrak{C}$ is not homotopically trivial and by the remark after the statement of the proposition we are assuming the interior of $\mathfrak{C}$ is not homotopically trivial. That is, a homotopy of the boundary to a point would also homotope all of the curves in  either the interior or the complement of the complex to that point. 
So by our assumption on the largeness of future systoles it cannot become shorter that $s^{-\frac{1}{2} -\epsilon}$. 
At the time listed the vertical component will be at most 
$$ \frac{1} {ms^{(C+3)\epsilon}}ms^{-\frac 1 2 +C\epsilon} = s^{-\frac 1 2 -3\epsilon}.$$ 
meaning the horizontal component must be larger and so the lemma follows.
\end{proof}
\subsection{$r$-shadowing}\label{sec:shadow}
We will complete the proof of Proposition \ref{prop:induct} by appealing to Proposition \ref{prop:baby}. To do this we first need a lemma to relate a closed geodesic not leaving a subcomplex to a vertical trajectory not leaving a subcomplex. We do this in this section.
\begin{defin}
\label{def:shadow}
Let $\gamma$ be a saddle connection and $a\in \gamma$. We say that the vertical trajectory of $a$ \emph{r-shadows} $\gamma$ if there exists a $\sigma\in\mathbb{R}$ such that for all $\tau\in I_r :=[0,r]$ (if $r>0$; $\tau\in I_r:= [r,0]$ if $r<0$) we have that $\varphi^h_{\sigma\tau}\circ \varphi_\tau^v(a) \in\gamma$ and that the set
$$T(a,\gamma,r):= \bigcup_{\tau\in I_r}\bigcup_{s\in[0,\sigma r]}\varphi_s^h\circ\varphi^v_\tau(a)$$
is a flat triangle. We call $T(a,\gamma,r)$ a \emph{shadowing triangle}.
\end{defin}
See Figure \ref{fig:shadow}, left, for an illustration of this definition.
\begin{figure}[t]
\begin{center}
\includegraphics[width=5.5in]{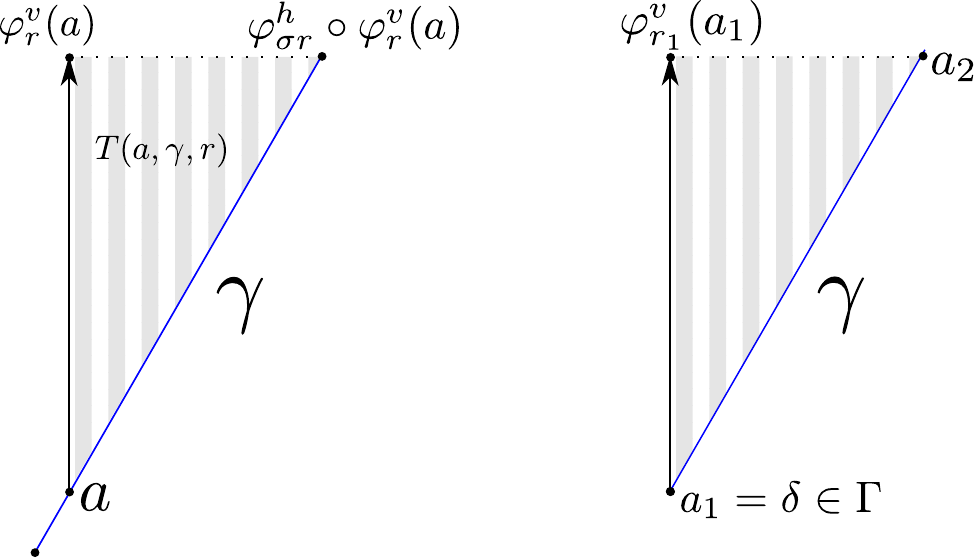}
\end{center}
\caption{On the left, an illustration of the definition of shadowing. On the right, the first step in the recursive procedure used in the proof of Lemma \ref{lem:sys to traj}.}
\label{fig:shadow}
\end{figure}
\begin{lem}\label{lem:sys to traj} 
Let $\Gamma$ be a union of saddle connections that form a closed loop on a flat surface $(X,\omega)$ such that $|g_t\Gamma| = \delta_t(\omega)$ for some $t>0$. Then there exists a $D$, depending only on the stratum to which $(X,\omega)$ belongs, so that there is a point $a\in \Gamma$ such that its vertical trajectory $\frac{v(\Gamma)}{D}$-shadows some $\gamma\in\Gamma$.
\end{lem}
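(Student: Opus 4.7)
The plan proceeds in two steps: a pigeonhole reduction to a single saddle connection, followed by a local geometric argument at that saddle connection.

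First, since $|g_t\Gamma| = \delta_t(\omega)$, the closed loop $\Gamma$ realizes the systole on $g_t\omega$, and the number of saddle connections in any such length-minimizing collection is bounded by a topological constant $n_0$ depending only on the stratum (cf.\ Remark \ref{rmk:mbound}). By pigeonhole, some $\gamma \in \Gamma$ satisfies $v(\gamma) \geq v(\Gamma)/n_0$; assume $v(\gamma)>0$ and set $\sigma = h(\gamma)/v(\gamma)$, so that the parametrized motion $\tau \mapsto \varphi^h_{\sigma\tau}\varphi^v_\tau(a)$ traces along $\gamma$ itself for any $a \in \gamma$. Any shadowing of $\gamma$ for time $r \geq v(\gamma)/D_1$ then gives the conclusion with $D = n_0 D_1$.

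Second, we must locate $a \in \gamma$ and $r \geq v(\gamma)/D_1$ so that the shadowing region $T(a,\gamma,r)$ is flat. Applying $g_t$ rescales the region's legs to sizes $e^t\sigma r$ and $e^{-t}r$ on $g_t\omega$; using $e^t h(\gamma) \leq \delta_t$ and $e^{-t}v(\gamma)\leq \delta_t$ (since $|g_t\gamma| \leq \delta_t$), both legs are bounded by $\delta_t/D_1$. So the problem reduces to the following claim on $g_t\omega$: the saddle connection $g_t\gamma$ admits, on at least one side, a flat triangular region whose legs have length at least $c\delta_t$, for some $c = 1/D_1$ depending only on the stratum.

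The key geometric input, and the main obstacle, is this claim. I would argue contrapositively: if no such flat triangle exists on either side of $g_t\gamma$, then a singularity $p'$ of $g_t\omega$ lies within perpendicular distance $\delta_t/D_1$ of the interior of $g_t\gamma$. The straight segment from $p'$ to the nearer endpoint of $g_t\gamma$ is then a saddle connection of length less than $\delta_t$, and splicing it into $g_t\Gamma$ (replacing a sub-segment of $g_t\gamma$ by this shortcut) produces a closed curve of strictly smaller total length, contradicting the minimality of $\delta_t$ as soon as the spliced curve is non-contractible. Ensuring non-contractibility is the delicate point: one must rule out combinatorial degeneracies (e.g.\ the shortcut backtracks along $g_t\gamma$, or the resulting loop becomes homotopically trivial because of how the saddle connections of $g_t\Gamma$ meet at their shared endpoints). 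These degenerate configurations are finite in number and controlled by the stratum, so they can be absorbed into $D_1$, and the flat-triangle claim follows. Setting $D = n_0 D_1$ and taking $a$ to lie just inside an endpoint of $\gamma$ on the flat side then yields the desired shadowing.
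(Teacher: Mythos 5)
Your approach differs substantially from the paper's. You pigeonhole first to a single saddle connection $\gamma\in\Gamma$ with $v(\gamma)\geq v(\Gamma)/n_0$ and then try to show that this fixed $\gamma$ can be shadowed from one of its endpoints for a definite fraction of its vertical length. The paper instead launches a shadowing trajectory from a singularity and lets it travel around the \emph{entire} loop $\Gamma$, restarting whenever a singularity interrupts the shadowing; the loop structure (together with minimality of $\Gamma$) forces the number of restarts to be at most a topological constant $4d$, and the pigeonhole is applied to the resulting list of shadowing segments, whose lengths sum to $v(\Gamma)$. Crucially, the paper never asserts that a \emph{predetermined} $\gamma$ admits a large flat shadowing triangle; it only extracts a long segment from somewhere on $\Gamma$.

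The gap in your argument is the contradiction step for the "flat-triangle claim." Suppose a singularity $p'$ blocks the shadowing region next to $g_t\gamma$. Replacing a sub-segment of the straight saddle connection $g_t\gamma$ by a path that detours through $p'$ cannot shorten the curve: since $p'$ does not lie on $g_t\gamma$ (singularities occur only at its endpoints), the triangle inequality makes the detour \emph{strictly longer} than the straight sub-segment it replaces, so minimality of $\delta_t$ is not contradicted. The "delicate point" is therefore not non-contractibility, as you suggest, but the unjustified assertion that the splice decreases length. In fact a singularity can sit close to a systolic saddle connection without violating minimality, and it is precisely by accumulating such blocking events around the whole loop that the paper obtains a contradiction (a repeated singularity yields a short nontrivial closed curve). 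There is also a secondary flaw in your reduction: the shadowing triangle for $g_t\gamma$ has a fixed aspect ratio given by the slope of $g_t\gamma$, and the vertical leg you need is $e^{-t}v(\gamma)/D_1$, which can be much smaller than $\delta_t$ when $g_t\gamma$ is nearly horizontal; thus the existence of a flat triangle with both legs of length at least $c\,\delta_t$ is neither equivalent to nor implied by what you actually need to prove.
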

\begin{proof}
Let $a_1 \in\Gamma$ be a singularity, i.e., $a_1\in\partial\gamma$ for some $\gamma\in\Gamma$. Let $r_1$ be the largest number so that $a_1$ $r_1$-shadows $\gamma$ and $\sigma_1$ is the number from Definition \ref{def:shadow} associated to $a_1$ and $\gamma$. Starting with $i=1$, either $a_{i} \in \Sigma$ or not. If it is not, then $a_{i+1}$ is defined as
$$a_{i+1} = \lim_{t\rightarrow r_i} \varphi_{\sigma_it}^h\circ\varphi_t^v(a_i),$$
i.e., $a_{i+1}$ is obtained as a limit by shadowing the same saddle connection $\gamma$ which was shadowed by $a_i$. If $a_i\in\Sigma$, since $\Gamma$ is a closed loop, there is exactly one other edge $\gamma'\in\Gamma$ which contains $a_i$. In this case, we define $a_{i+1}$ to be 
$$a_{i+1} = \lim_{t\rightarrow r_i} \varphi_{\sigma_it}^h\circ\varphi_t^v(a_i),$$
where $r_i$ and $\sigma_i$ are the appropriate quantities obtained from shadowing $\gamma'$. We proceed recursively, given $a_i,\sigma_i$ and $r_i$, to obtain $a_{i+1}, \sigma_{i+1}$ and $r_{i+1}$. Since $\Gamma$ is a closed loop, after finitely many iterations of this process we shadow every edge in $\Gamma$ and return to $ a_1$. Let $k$ be the number of these iterations. 

We claim that $k< 4d$, where $d$ is the number of cone points of $\omega$. First observe that once we have defined $a_d$ there exists a simple closed curve on the flat surface made up of saddle connections where the sum of the (unsigned) vertical and horizontal holonomies is at most the sum of the vertical and horizontal holonomies of the saddle connections in $\gamma$ from $a_1$ to $a_d$. This happens for the following reason: for each shadowing triangle constructed when we go from $a_i$ to $a_{i+1}$, the geodesic segment connecting $a_i$ to $a_{i+1}$ is homotopic, relative to $\{a_i,a_{i+1}\}$, to the union of at most two geodesic segments, one connecting $a_i$ to some $\delta_i\in\Sigma$ and another connecting $\delta_i$ to $a_{i+1}$ (if $a_{i+1}\in\Sigma$ there is only the one geodesic segment connecting $a_i$ to $a_{i+1}$). Moreover, for each $i$, the vertical and horizontal holonomies of these two homotopic curves are the same. Doing this $d$ times we involve the $d+1$ singularities $a_1 = \delta_0,\delta_1,\dots, \delta_{d}$ in the different shadowing triangles. Thus the same singularity must occur at least twice and we therefore have a simple closed curve. Now, if $k\geq 4d$, then one of these closed curves has that both its horizontal and vertical holonomies at most half of that of $\Gamma$. So $\Gamma$ is never the systole contradicting our assumption.

Let $\xi_1,\dots, \xi_k$ be the lengths of different segments of shadowing trajectories which emanate from the points $a_i\in\Gamma$. Then $\sum \xi_i=v(\Gamma) < |\Gamma|$. Thus the largest is at least $\frac {v(\Gamma)}{k}\geq \frac{v(\Gamma)}{4d}$.
\end{proof}

We wish to prove Proposition 3 by invoking Proposition 1. To do this we need the shadowing trajectory to be inside $\mathfrak{C}$. This requires some additional work.

\textbf{Sublemma:} Consider the flat triangles formed in the definition of shadowing. Under the assumptions of Lemma \ref{lem:sys to traj} and the procedure in its proof, any point in such a triangle is a point in at most two such triangles. 

\begin{proof}Consider $g_t\Gamma$ in $g_t\omega$. Consider a point that is in at least two such triangles. We follow the shadowing procedure from Lemma \ref{lem:sys to traj}. For any two consecutive triangles it is contained in, one of these shadowing trajectories shadows the point first. One can link this point back to itself  by a connected subset of $\Gamma$ union two horizontal trajectories, giving an element of a nontrivial homotopy class.

 If the point is in at least three shadowing triangles, one of these has that the sums of vertical and horizontal trajectories is less than half of the sum of vertical and horizontal holonomies of $\Gamma$. Since the length of $\Gamma$ is at least $\frac 1 {\sqrt{2}}$ times the sum of its vertical and horizontal holonomies, there is  closed geodesic that is shorter.  This contradicts that $\Gamma$ is a systole at time $t$.
\end{proof}

\begin{lem}\label{lem:sys to traj inside} 
Let $\Gamma$ be a union of saddle connections contained in $\mathfrak{C}$ which is the systole of $g_t\omega$ for some $t>0$. Then there exists a $D$, depending only on the stratum to which $(X,\omega)$ belongs, so that there is a point $a\in \Gamma$ such that its vertical trajectory $\frac{|\Gamma|}{D}-2v(\partial \mathfrak{C})$-shadows some $\gamma\in\Gamma$ and stays in $\mathfrak{C}$ while it shadows $\gamma$.
\end{lem}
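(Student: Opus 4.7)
The plan is to start from Lemma~\ref{lem:sys to traj} and trim the resulting shadowing trajectory down to the portion that remains in $\mathfrak{C}$, using the sublemma to bound the amount trimmed. First, I would re-run the proof of Lemma~\ref{lem:sys to traj} to obtain points $a_1,\dots,a_k\in\Gamma$ with $k\le 4(d+1)$, where each $a_i$ vertically shadows some $\gamma_i\in\Gamma$ for length $\xi_i$ with $\sum_i\xi_i=v(\Gamma)$, together with the associated shadowing triangles $T_i=T(a_i,\gamma_i,\xi_i)$. By the sublemma, no point of the surface lies in three or more of the $T_i$.

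The main geometric step is the following observation. Fix $i$ and a height $y\in[0,\xi_i]$ with $\varphi_y^v(a_i)\notin\mathfrak{C}$. The horizontal shadowing segment at height $y$ connects $\varphi_y^v(a_i)$ to a point of $\gamma_i\subseteq\Gamma\subseteq\mathfrak{C}$, so it must cross $\partial\mathfrak{C}$. Hence the set of such "bad" heights is contained in the vertical projection of $\partial\mathfrak{C}\cap T_i$, whose one-dimensional measure is at most $v(\partial\mathfrak{C}\cap T_i)$. Summing over $i$ and applying the two-overlap bound from the sublemma,
\[
\sum_{i=1}^{k}\bigl|\{y\in[0,\xi_i]:\varphi_y^v(a_i)\notin\mathfrak{C}\}\bigr|
\;\le\;\sum_{i=1}^{k} v(\partial\mathfrak{C}\cap T_i)
\;\le\; 2\,v(\partial\mathfrak{C}),
\]
so the total inside-$\mathfrak{C}$ length of the shadowing trajectories is at least $v(\Gamma)-2v(\partial\mathfrak{C})$.

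Each of the $k$ trajectories is subdivided into inside/outside sub-intervals by its crossings with the saddle connections of $\partial\mathfrak{C}$, and the number of such crossings per trajectory is bounded solely by the stratum (cf.\ Remark~\ref{rmk:mbound}). Pigeonholing across this uniformly bounded family of inside sub-intervals therefore produces a single connected sub-trajectory along some $a_i$ of length at least $|\Gamma|/D-2v(\partial\mathfrak{C})$, after absorbing the topological constants and the standard comparison between $v(\Gamma)$ and $|\Gamma|$ into $D$. This sub-trajectory shadows the corresponding $\gamma_i$ and stays in $\mathfrak{C}$ throughout.

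The main obstacle is producing a \emph{single} connected shadowing interval rather than a disjoint union of short ones: a priori the trajectory could oscillate in and out of $\mathfrak{C}$ many times. The stratum-dependent bound on the number of saddle connections in $\partial\mathfrak{C}$, and hence on the number of in/out transitions per trajectory, is precisely what allows the final pigeonhole step to deliver one sub-interval of near-maximal length and thus recover the claimed shadowing bound.
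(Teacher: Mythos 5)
Your proposal follows the same route as the paper: re-run the shadowing construction from Lemma~\ref{lem:sys to traj}, observe that whenever a shadowing trajectory is outside $\mathfrak{C}$ the horizontal cross-section of its shadowing triangle must meet $\partial\mathfrak{C}$ (since the shadowed $\gamma_i\subset\Gamma\subset\mathfrak{C}$), and then invoke the sublemma's two-overlap bound to conclude $\sum_i |\{y:\varphi_y^v(a_i)\notin\mathfrak{C}\}|\le 2v(\partial\mathfrak{C})$. This is exactly the paper's argument, which then says ``the lemma follows as in Lemma~\ref{lem:sys to traj}.''

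Where you go beyond the paper is in trying to extract a single \emph{connected} inside segment by bounding the number of in/out transitions per trajectory ``solely by the stratum (cf.\ Remark~\ref{rmk:mbound}).'' This justification does not hold up: Remark~\ref{rmk:mbound} bounds the number of saddle connections in $\partial\mathfrak{C}$, not the number of times a vertical segment can meet a given saddle connection. On a translation surface a vertical segment and a saddle connection are both geodesics and can cross each other many times — the intersection count depends on lengths and directions, not just topology — so $\partial\mathfrak{C}\cap T_i$ can a priori have arbitrarily many components, and the pigeonhole factor you introduce is not a stratum constant. To be fair, the paper's own proof is silent on this connectedness point and leans on the reader to supply it, so you have correctly identified the delicate step; but the justification you offer for it is not correct as stated. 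A cleaner route (matching how the lemma is actually used in Proposition~\ref{prop:induct}) is to note that the downstream application only needs \emph{some} vertical trajectory of the stated length staying in $\mathfrak{C}$, and to handle the combinatorics of in/out transitions more carefully, rather than appealing to Remark~\ref{rmk:mbound}.
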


\begin{proof}
This is similar to the previous proof with one small variant. First, there are two ways a shadowing trajectory can leave the complex. 1) It can cross a saddle connection in $\partial \mathfrak{C}$ that is not the geodesic it is shadowing or 2) it can be shadowing a saddle connection in $\partial \mathfrak{C} \cap \Gamma$ and the shadowing trajectory immediately begins flowing  outside of the complex. If either of these happens then on each horizontal line given by a shadowing trajectory while it is outside the complex there is a point in $\partial \mathfrak{C}$. The sublemma establishes that each point in $\partial \mathfrak{C}$ can be in at most two shadowing triangles. So in a shadowing trajectory one can travel for at most $2v(\partial \mathfrak{C})$ outside of the complex and the lemma follows as in Lemma \ref{lem:sys to traj}.
\end{proof}
The next lemma bounds the vertical component of $\Gamma_{s+\log(s^{2\rho \epsilon})}$ on $g_s\omega$ from below. This will allow us to combine Lemma \ref{lem:sys to traj inside} and Proposition \ref{prop:baby} to prove Proposition \ref{prop:induct}. 
 Some notation: if $\Gamma$ denotes a curve on  $\omega$, let $g_\ell\Gamma$ denote the image of this curve under $g_\ell$ on $g_\ell\omega$.
\begin{lem}
\label{lem:time big}
Let $(X,\omega)$ be a flat surface and suppose there exists $t_0$ and $\epsilon\geq 0$ so that for all $t>t_0$ we have that ${\delta_t (\omega)>t^{-\frac 1 2 -\epsilon}}$. For any $\rho > 0$ there exists $s_0$ so that if for some $s>s_0$ we have that
\begin{itemize}
\item $\delta_\ell(\omega)<\ell^{-\frac 1 2 +\epsilon}$ for all $\ell \in [s,s+\log(s^{2\rho \epsilon})]$ and
\item $\Gamma$ is a curve on $(X,\omega)$ with the property that 
${|g_{s+\log(s^{2\rho\epsilon})}\Gamma| = \delta_{s+\log(s^{2\rho\epsilon})}(\omega)}$
\end{itemize} 
then the vertical component of $g_s\Gamma$ on $(X,g_s\omega)$ is at least $s^{-\frac 1 2+(\rho-4)\epsilon}$.
\end{lem}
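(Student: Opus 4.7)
My plan is to control the horizontal and vertical components of $\Gamma$ at time $s$ and use the hypotheses to squeeze the vertical component from below. Set $V = v(g_s\Gamma)$ and $H = h(g_s\Gamma)$, and let $\tau = \log(s^{2\rho\epsilon})$. Teichm\"uller flow by an additional time $\ell$ scales these to $e^{-\ell}V$ and $e^\ell H$, and summing the elementary inequalities $\max(|v|,|h|) \le \sqrt{v^2+h^2} \le |v|+|h|$ over each $\gamma\in\Gamma$ gives the basic bound
\[
\max\bigl(e^{-\ell}V,\; e^\ell H\bigr) \;\le\; |g_{s+\ell}\Gamma| \;\le\; e^{-\ell}V + e^\ell H.
\]

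First I would use the hypothesis at $\ell=\tau$ (which lies in the range $[s,s+\tau]$ where $\delta_\ell(\omega)<\ell^{-1/2+\epsilon}$) to extract an upper bound on $H$. From $|g_{s+\tau}\Gamma| = \delta_{s+\tau}(\omega) < (s+\tau)^{-1/2+\epsilon}$ and the left inequality above, $e^\tau H \le (s+\tau)^{-1/2+\epsilon}$, hence
\[
H \;\le\; s^{-2\rho\epsilon}(s+\tau)^{-1/2+\epsilon}.
\]
This captures the intuition that a curve which remains short after $\tau$ time units of exponential horizontal stretching must have been nearly vertical at time $s$. Next I would invoke the global lower bound $\delta_t(\omega)>t^{-1/2-\epsilon}$: combined with $|g_{s+\ell}\Gamma|\ge\delta_{s+\ell}(\omega)$ and the right inequality above, this gives, for all $\ell\ge 0$ (taking $s_0\ge t_0$),
\[
e^{-\ell}V + e^\ell H \;\ge\; (s+\ell)^{-1/2-\epsilon}.
\]
The left side is convex in $\ell$, minimised at $\ell^\ast=\tfrac12\log(V/H)$ with value $2\sqrt{VH}$.

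The main step is a case analysis on the location of $\ell^\ast$. If $\ell^\ast\in[0,\tau]$, then $2\sqrt{VH}\ge(s+\ell^\ast)^{-1/2-\epsilon}\ge(s+\tau)^{-1/2-\epsilon}$, and dividing by the upper bound on $H$ yields $V\gtrsim e^\tau(s+\tau)^{-1/2-3\epsilon}\sim s^{-1/2+(2\rho-3)\epsilon}$. If $\ell^\ast>\tau$, then $e^{-\ell}V+e^\ell H$ is decreasing on $[0,\tau]$, so at $\ell=\tau$ the vertical term $e^{-\tau}V$ dominates, forcing $V\gtrsim e^\tau(s+\tau)^{-1/2-\epsilon}\sim s^{-1/2+(2\rho-1)\epsilon}$, which is even stronger. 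Finally, if $\ell^\ast<0$ then $V<H$, and the constraint at $\ell=0$ forces $H\ge\tfrac12 s^{-1/2-\epsilon}$; comparing with the upper bound on $H$ produces $s^{(2\rho-2)\epsilon}\lesssim 1$, impossible for $\rho>1$ and $s$ sufficiently large.

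In each feasible case we obtain $V\gtrsim s^{-1/2+(2\rho-3)\epsilon}$, and since $\rho=(2+C)/c+2C+5>1$ one has $(2\rho-3)\epsilon>(\rho-2)\epsilon$ strictly, so for $s\ge s_0$ large the polynomial gap absorbs all implicit multiplicative constants (from $(s+\tau)\sim s$ and the factor $\tfrac14$), giving $V\ge s^{-1/2+(\rho-2)\epsilon}$ as required. The step I expect to demand the most care is ruling out $\ell^\ast<0$: there one must quantitatively balance the global lower bound at $\ell=0$ against the local upper bound on $H$ derived from the hypothesis at $\ell=\tau$, and choose $s_0$ large enough to push the various polynomial gaps on the right side of each comparison.
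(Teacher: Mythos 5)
Your proof is correct and takes a genuinely different route from the paper's. The paper works at the intermediate time $s+\log(s^{\rho\epsilon})$ (the geometric mean, on the Teichm\"uller-time scale, of $s$ and $s+\log(s^{2\rho\epsilon})$): it bounds the horizontal component of $g_{s+\log(s^{\rho\epsilon})}\Gamma$ from above by rescaling the short-systole bound at the far endpoint, bounds the total length of $g_{s+\log(s^{\rho\epsilon})}\Gamma$ from below by the long-systole hypothesis, subtracts to get the vertical component large at that intermediate time, and then multiplies by $e^{\log(s^{\rho\epsilon})}$ to return to time $s$. You instead work at time $s$ directly with $V=v(g_s\Gamma)$, $H=h(g_s\Gamma)$, extract the upper bound on $H$ from the far endpoint, and exploit the convexity of $\ell \mapsto e^{-\ell}V + e^\ell H$ via a three-case analysis on where the minimum $\ell^\ast=\tfrac12\log(V/H)$ falls relative to $[0,\tau]$. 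The case analysis is a little longer, but it is arguably more transparent than the paper's appeal to ``the length of $\Gamma$ is decreasing at time $s+\log(s^{\rho\epsilon})$,'' whose justification for a \emph{union} of saddle connections (rather than a single one) is left somewhat implicit. Both arguments ultimately produce the same polynomial gap in the exponent and both tacitly rely on $\rho>1$ and $\epsilon>0$ for the gap to absorb the multiplicative constants as $s\to\infty$ (your case $\ell^\ast<0$ uses $\rho>1$ explicitly); the paper similarly dismisses $\rho<1$ with a one-line remark. Since the lemma is only invoked with $\rho = (2+C)/c + 3C + 8 \geq 8$, these edge cases are harmless. One small point worth making explicit: your inequality $\max(e^{-\ell}V,e^{\ell}H)\le |g_{s+\ell}\Gamma|$ holds for a union because $\sum_i\max(a_i,b_i)\ge\max(\sum_i a_i,\sum_i b_i)$, which you should note is where the summation over saddle connections enters.
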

\begin{proof} Observe that if $\rho<3$ then $(\rho-4)\epsilon<-\epsilon$, thus the assumption implies the conclusion for $s_0=t_0$. So we assume $\rho\geq 3$. 
Choose $s_0$ large enough so that 
$$\frac 1 2 (s+\log(s^{2\rho\epsilon}))^{-\frac 1 2-\epsilon}>s^{-\frac 1 2 -1.5\epsilon} \hspace{.2in}\text{ and } \hspace{.2in}e^{\log(s^{\rho \epsilon})}(s+\log(s^{\rho \epsilon}))^{-\frac 1 2 -\epsilon}>(s+\log(s^{2\rho \epsilon}))^{-\frac1 2 +\epsilon}$$ for all $s>s_0$. Such an $s_0$ clearly exists for the first inequality. For the second inequality, we use that  $\underset{s \to \infty}{\lim}\,\frac{s}{s+\log(s^{\rho \epsilon})}=1$ and so for each $\rho\geq 3>2$ there exists $s_0$ so that
$$ s^{\rho \epsilon}(s+\log(s^{\rho \epsilon}))^{-\frac 1 2 -\epsilon}>(s+\log(s^{\rho \epsilon})^{-\frac 1 2 +\epsilon}>(s+\log(s^{2\rho \epsilon})^{\frac 1 2 +\epsilon}$$
for all $s>s_0$. From now on we assume that $s>\max\{t_0,s_0\}$.

First, observe that since
$$e^{\log(s^{\rho \epsilon})}(s+\log(s^{\rho \epsilon}))^{-\frac 1 2 -\epsilon}>(s+\log(s^{2\rho \epsilon}))^{-\frac1 2 +\epsilon}$$
by our assumptions on the length 
of the systoles we have that the length of $\Gamma$ is decreasing at time $s+\log(s^{\rho \epsilon})$. This implies that
$$v(g_{s+\log(s^{\rho\epsilon})}\Gamma)>\frac 1 {\sqrt{2}}(s+\log(s^{\rho\epsilon}))^{-\frac 1 2 -\epsilon}>s^{-\frac 1 2 -1.5\epsilon}.$$
So $v(g_s\Gamma)>s^{\rho\epsilon}s^{-\frac 1 2 -1.5\epsilon}$ implying the lemma. 
\end{proof}

\subsection{Proof of Proposition \ref{prop:induct}}
To prove Proposition \ref{prop:induct} we will show that, given some large $\rho>0$, if $\Gamma_{s+\log(s^{2\rho \epsilon})}$ is contained in $\mathfrak{C}$ for all $t\in(s,s +\log(s^{\rho\epsilon}))$ then by Proposition \ref{prop:baby} the assumptions of Proposition \ref{prop:induct} are violated. We use the previous results of this section to translate the assumption of Proposition \ref{prop:induct} into the language of Proposition \ref{prop:baby}. The lower bound (\ref{eq:together}) bounds the length of a vertical trajectory contained in $\mathfrak{C}$ in terms of the area and horizontal component of the boundary of $\mathfrak{C}$. The bounds (\ref{eqn:shrinking}) and (\ref{eq:time}) use Proposition \ref{prop:baby} to complete the proof.

Given $C>0$, let $\rho = \frac{2+C}{c}  + 3C + 10$. 
Let $p_* = p_*(\rho,\epsilon)$ be the number from proof of Lemma \ref{lem:time big}, i.e.,   for all $s>p_*$ we have  
$$(s+\log(s^{2\rho\epsilon}))^{-\frac 1 2-\epsilon}>s^{-\frac 1 2 -2\epsilon} \hspace{.2in}\text{ and } \hspace{.2in}e^{\log(s^{\rho \epsilon})}(s+\log(s^{\rho \epsilon}))^{-\frac 1 2 -\epsilon}>(s+\log(s^{2\rho \epsilon}))^{-\frac1 2 +\epsilon}.$$
 Let $p^*$ be big enough so that for any $s>p^*$ we have 

\begin{equation}
\label{eqn:pBnd}
2^{\frac{1}{2}+c} d^{3/2} D^{c}\mathcal{M}^{3c+1} s^{-\frac{1}{2} - 2\epsilon} < (s+\log(s^{2\rho\epsilon}))^{-\frac{1}{2} - \epsilon} \, \text{ and }\mathcal{M}< s^{(\rho-C-4)\epsilon}/8D,
\end{equation}
where $d,D$ and $\mathcal{M}$ are the topological constants from Proposition \ref{prop:baby}, Lemma \ref{lem:sys to traj inside} and Remark \ref{rmk:mbound}, respectively. Let $p = \max\{p_*,p^*,t_0\}$ and we assume from now on that $s>p$.

Let $\mathfrak{C}$ be our complex and $\Gamma$ be a union of saddle connections such that $\Gamma = \Gamma_{s+\log(2s^{\rho\epsilon})}(\omega)$, assume that it is contained in $\mathfrak{C}$ and let $m$ be the level of $\mathfrak{C}$. By Lemma \ref{lem:time big}  we have
\begin{equation*}
v(g_s\Gamma)\geq s^{-\frac 1 2+(\rho-4)\epsilon}.
\end{equation*}
By Lemma \ref{lem:sys to traj inside} there exists a vertical trajectory on $g_{s}\omega$ of length at least $\frac{1}{2D}s^{-\frac 1 2 +(\rho-4)\epsilon}$ which stays in $\mathfrak{C}$. Indeed, we have that
$$\frac 1 D v(\Gamma_s)-2v(\partial \mathfrak{C})\geq \frac 1 D s^{-\frac 1 2 +(\rho-4)\epsilon}-2\mathcal{M}s^{-\frac 12 +C\epsilon}\geq \frac 1 D s^{-\frac 1 2 +(\rho-4)\epsilon}-\frac{2}{8D}s^{(\rho-C-4)\epsilon}s^{-\frac 1 2 +C\epsilon},$$
where the last inequality uses the second condition of $p^*$.
This gives a vertical trajectory on $g_{s+\log(ms^{(C+3)\epsilon})}\omega$ of length at least $\frac 1 {2m}s^{-\frac 1 2 +(\rho-4-C-3)\epsilon}$.
By Lemmas \ref{lem:area} and \ref{lem:hor} on $g_{s+\log(ms^{(C+3)\epsilon})}\omega$ we have that $\frac Ah<m^2s^{-\frac 1 2 +(2C+3)\epsilon}$. So there exists a vertical trajectory of length at least 
\begin{equation}\label{eq:together}
\frac{ s^{(\rho - (3C+10))\epsilon }}{2 m^{3}2 D}\frac A h .
\end{equation}
We can apply Proposition \ref{prop:baby} with $N = \frac{s^{(\rho - (3C+10))\epsilon}}{2m^{3} D} = \frac{s^{(2+C)\epsilon/c}}{2m^{3} D}$, which yields a geodesic that can be shrunk to size at most $\sqrt{2} d^{3/2} N^{-c}\sqrt{A}$. Lemmas \ref{lem:area} and \ref{lem:hor}, respectively, yield an upper bound for the area $A$ and lower bound for the horizontal component $h$. Therefore, Proposition \ref{prop:baby} yields a geodesic which can be shrunk to size at most

\begin{equation}
\label{eqn:shrinking}
\begin{split}
\sqrt{2} d^{3/2} N^{-c}\sqrt{A} &\leq \sqrt{2} d^{3/2} N^{-c} m s^{-\frac{1}{2} + C\epsilon} = \sqrt{2}d^{3/2} \left(  \frac{s^{( 2+C )\epsilon/c}}{2m^{3}D}  \right)^{-c} m s^{-\frac{1}{2} + C\epsilon} \\
&= 2^{\frac{1}{2}+c} d^{3/2} D^{c}m^{3c+1} s^{-\frac{1}{2} - 2\epsilon} \leq 2^{\frac{1}{2}+c} d^{3/2} D^{c}\mathcal{M}^{3c+1} s^{-\frac{1}{2} - 2\epsilon} \\
&< (s+\log(s^{2\rho\epsilon}))^{-\frac{1}{2} - \epsilon}
\end{split}
\end{equation}

in time less than

\begin{equation}\label{eq:time}
\begin{split}
\frac{1}{2}\log\left(\frac{dN^2A}{h^2}\right) &= \log(N)+\log{h^{-1}} + \frac{1}{2}(\log(A) + \log(d)) \\
&\leq \log\left(\frac{s^{(2+C)\epsilon/c}}{2m^{3}D}\right) + \log(s^{\frac{1}{2} +3\epsilon}) + \frac{1}{2}\left( \log(m^{2} s^{-1+2C\epsilon}) + \log(d) \right)\\
&= \log\left( \frac{d^{\frac{1}{2}}s^{( 3+C+ (2+C)/c )\epsilon}}{2m^{2}D}  \right) \leq \log\left( s^{\rho\epsilon} \right).
\end{split}
\end{equation}
This contradicts that $\delta_t(\omega)>t^{-\frac{1}{2}-\epsilon}$ for $t>t_0$ and the result follows.

\section{Applying the setup}
\begin{prop}
\label{prop:contr}
There exists $\epsilon_0>0$ (which depends only on topology) and a $t_*>0$ (which depends on $\epsilon_0$) so that if $ \delta_t(\omega)>\frac 1 {t^{\frac 1 2 +\epsilon}}$ for some $\epsilon\in(0,\epsilon_0)$ and all $t>t_*$, then for any $t>t_*$ there exists $u\in [t,t+\log(t^{\mathcal{M}})]$ so that $\delta_u(\omega)>\frac 1 {u^{\frac 1 2 -\epsilon }}$, where $\mathcal{M}$ is the topological constant from Remark \ref{rmk:mbound}.
\end{prop}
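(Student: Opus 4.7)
I plan to argue by contradiction: suppose $\delta_u(\omega)\leq u^{-\frac{1}{2}+\epsilon}$ for every $u\in[t,t+\log t^{\mathcal{M}}]$, and then build a strictly increasing chain of complexes $\mathfrak{C}_0 \subsetneq \mathfrak{C}_1 \subsetneq \cdots$ of strictly increasing level. After at most $\mathcal{M}$ steps the level exceeds $\mathcal{M}$, contradicting Remark \ref{rmk:mbound}. Each extension $\mathfrak{C}_m\mapsto\mathfrak{C}_{m+1}$ will combine Proposition \ref{prop:induct} (which supplies a saddle connection of the systole $\Gamma_{s_{m+1}}(\omega)$ meeting the exterior of $\mathfrak{C}_m$) with Lemma \ref{lem:combine} (which adjoins a short disjoint saddle connection).

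I would set constants by the recursion $C_0:=1$, $\rho_m:=\rho(C_m)=(2+C_m)/c+2C_m+5$, $C_{m+1}:=C_m+2\rho_m+1$. After at most $\mathcal{M}$ steps the terminal values $C_{\mathcal{M}}$ and $\rho_{\mathcal{M}}$ depend only on topology. Put $\epsilon_0:=1/(2\rho_{\mathcal{M}})$ and take $t_*$ larger than all thresholds $p$ demanded by Proposition \ref{prop:induct} for $C_0,\ldots,C_{\mathcal{M}-1}$, and large enough that $\mathcal{M}^2 s^{-1+2C_{\mathcal{M}}\epsilon}<1$ whenever $s\geq t_*$ (permissible since $C_{\mathcal{M}}\epsilon_0<\tfrac{1}{2}$). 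Define $s_0:=t$, $s_{m+1}:=s_m+\log(s_m^{2\rho_m\epsilon})$, and take $\mathfrak{C}_0$ to be a union of saddle connections realizing the systole $\Gamma_{s_0}(\omega)$: a level-$\geq 1$ complex representing a non-contractible closed curve, with saddle connections of length at most $\delta_t(\omega)\leq s_0^{-\frac{1}{2}+\epsilon}=s_0^{-\frac{1}{2}+C_0\epsilon}$. Inductively assume $\mathfrak{C}_m$ satisfies the hypotheses of Proposition \ref{prop:induct} with parameter $C_m$; the contradiction hypothesis supplies its systole bullet, so $\Gamma_{s_{m+1}}(\omega)\not\subset\mathfrak{C}_m$. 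Pick $\gamma\in\Gamma_{s_{m+1}}(\omega)$ meeting the exterior of $\mathfrak{C}_m$. On $g_{s_{m+1}}\omega$ Teichm\"uller dilation makes $\mathfrak{C}_m$ a $\sqrt{2}\,s_m^{-\frac{1}{2}+(C_m+2\rho_m)\epsilon}$-complex (horizontal components of its saddle connections inflate by at most $s_m^{2\rho_m\epsilon}$), and Lemma \ref{lem:combine} produces a disjoint saddle connection $\sigma$ of length at most $6\sqrt{2}\,s_m^{-\frac{1}{2}+(C_m+2\rho_m)\epsilon}\leq s_{m+1}^{-\frac{1}{2}+C_{m+1}\epsilon}$, using $C_{m+1}=C_m+2\rho_m+1$ and $s_m\geq t_*$. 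Set $\mathfrak{C}_{m+1}:=\mathfrak{C}_m\cup\{\sigma\}$.

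For the induction to continue the complement of $\mathfrak{C}_{m+1}$ must remain homotopically non-trivial. Lemma \ref{lem:area} bounds its area by $(m+1)^2 s_{m+1}^{-1+2C_{m+1}\epsilon}\leq\mathcal{M}^2 s_{m+1}^{-1+2C_{\mathcal{M}}\epsilon}<1$, so $\mathfrak{C}_{m+1}$ is a proper subset of the unit-area surface $X$; since $\mathfrak{C}_0$ has non-trivial complement and the short saddle connections we adjoin cannot collapse all the non-trivial topology sitting in the exterior while leaving positive-area room, the non-triviality is preserved inductively. After $\mathcal{M}$ iterations we obtain a complex of level $\geq\mathcal{M}+1$, contradicting Remark \ref{rmk:mbound}. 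The total elapsed time is $s_{\mathcal{M}}-t=\sum_{m=0}^{\mathcal{M}-1}\log(s_m^{2\rho_m\epsilon})\leq 2\epsilon\log(2t)\sum_{m}\rho_m\leq \log t^{\mathcal{M}}$ for $\epsilon\leq\epsilon_0$ and $t_*$ large, so every $s_m$ stays in $[t,t+\log t^{\mathcal{M}}]$ where the contradiction hypothesis is assumed to hold.

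The main technical obstacle is the balanced choice of the pair $(C_m,\rho_m)$. Each Teichm\"uller stretch of duration $\log(s_m^{2\rho_m\epsilon})$ dilates boundary saddle connections horizontally by the factor $s_m^{2\rho_m\epsilon}$, forcing the upper length-exponent $C_m$ to absorb an increment of roughly $2\rho_m$ per step, while $\rho_m=\rho(C_m)$ depends linearly on $C_m$. Thus $(C_m)$ grows geometrically, yet it terminates at a universal topological constant $C_{\mathcal{M}}$; one then chooses $\epsilon_0:=1/(2\rho_{\mathcal{M}})$ small enough simultaneously for the area bound $\mathcal{M}^2 s^{-1+2C_{\mathcal{M}}\epsilon}<1$ to keep $\mathfrak{C}_{m+1}$ a proper subset of $X$, and for $\mathcal{M}$ time-steps of size $\log(s^{2\rho_m\epsilon})$ to fit inside the window of length $\log t^{\mathcal{M}}$.
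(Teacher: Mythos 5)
Your overall architecture matches the paper's: argue by contradiction, build a chain of complexes $\mathfrak{C}_0\subsetneq\mathfrak{C}_1\subsetneq\cdots$ each step of which combines Proposition \ref{prop:induct} (the new systole escapes the previous complex) with Lemma \ref{lem:combine} (adjoin a short saddle connection), with recursively defined constants $(C_m,\rho_m)$ that terminate at topological constants after at most $\mathcal{M}$ steps, and show the total elapsed time fits inside $\log(t^\mathcal{M})$. That is the paper's strategy, and your bookkeeping of $C_m,\rho_m,\epsilon_0$ and the time budget is in the right spirit. However, there are two genuine gaps.

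The first and more serious gap is how you maintain the hypothesis of Proposition \ref{prop:induct} that the \emph{complement} of $\mathfrak{C}_m$ is homotopically non-trivial. You write that ``the short saddle connections we adjoin cannot collapse all the non-trivial topology sitting in the exterior while leaving positive-area room,'' and justify it with Lemma \ref{lem:area}'s area bound $(m+1)^2 s_{m+1}^{-1+2C_{m+1}\epsilon}<1$, i.e.\ $\mathfrak{C}_{m+1}\neq X$. But area alone does not imply the complement carries any non-trivial topology: the complement could be a positive-area union of embedded disks, which is homotopically trivial, and in that case Proposition \ref{prop:induct} no longer applies and the iteration stalls before reaching the contradiction. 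The paper closes exactly this gap with Proposition \ref{prop:homotop nontrivial}, which says that if the \emph{perimeter} (not merely the area) of the complex is less than a topological constant $e$, then the complement contains a homotopically non-trivial curve; the paper's conditions (III) and (V) on $t_*$ are there to guarantee the perimeter bound $m s_m^{-\frac12+C_m\epsilon}<e$ at every step. Your proof should invoke that proposition (or prove something equivalent); the area estimate is a consequence, not a substitute.

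The second gap is the endgame. You say that ``after $\mathcal{M}$ iterations we obtain a complex of level $\geq\mathcal{M}+1$, contradicting Remark \ref{rmk:mbound}.'' But you cannot actually extend a level-$\mathcal{M}$ complex: once $\mathfrak{C}$ has $\mathcal{M}$ saddle connections it is a triangulation of $X$, the complement is empty, the non-triviality hypothesis of Proposition \ref{prop:induct} fails, and Lemma \ref{lem:combine} has nothing to add. So the chain stops at $\mathfrak{C}_\mathcal{M}=X$ rather than overshooting, and Remark \ref{rmk:mbound} is not contradicted. The actual contradiction, as in the paper, is that $\mathfrak{C}_\mathcal{M}=X$ is a $s_\mathcal{M}^{-\frac12+C_\mathcal{M}\epsilon}$-complex of level $\mathcal{M}$, so by Lemma \ref{lem:area} its area is at most $\mathcal{M}^2 s_\mathcal{M}^{-1+2C_\mathcal{M}\epsilon}<1$, contradicting that $X$ has area $1$. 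You already have this estimate in hand; you simply need to redirect it from ``the complex is a proper subset'' to ``its area is strictly less than $1$ yet equals the area of $X$,'' which is the contradiction that closes the proof.
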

Before proving the proposition we need a preliminary result which will let us apply Proposition \ref{prop:induct}. Given a flat surface let $\varphi^\theta_t$ be the flow in direction $\theta$ on it.

\begin{prop}
\label{prop:homotop nontrivial}Let $(X,\omega)$ be a flat surface of genus $g$. There exists $e>0$
 so that if  $\mathbf{C}$ is a subset of the triangles in a triangulation of $\omega$ and the perimeter of $C$ is at most $e$ then there exists a homotopically nontrival curve contained in $\mathbf{C}^c$. 
\end{prop}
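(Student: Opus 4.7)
The plan is to prove the contrapositive: if every loop in $\mathbf{C}^c$ is null-homotopic in $X$, then the perimeter of $\mathbf{C}$ exceeds some $e>0$ depending on $(X,\omega)$. (I assume $\mathbf{C}$ is a proper subset of the triangulation, since otherwise $\mathbf{C}^c=\varnothing$ and the statement is vacuous.) I would take $e:=\frac{1}{2}\sqrt{4\pi a_0}$, where $a_0>0$ is the minimum area of a triangle in the given triangulation.

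Pick a connected component $V$ of $\mathbf{C}^c$, viewed as a compact subsurface of $X$ with boundary components $\gamma_1,\dots,\gamma_n$, each a loop of saddle connections. First I would show $V$ has genus $0$: any non-separating simple closed curve $\alpha\subset V$ admits a dual curve $\beta\subset V$ with algebraic intersection $\alpha\cdot\beta=1$, and the intersection pairing is local, so $\alpha\cdot\beta=1$ in $X$ as well. Hence $[\alpha]\neq 0$ in $H_1(X;\mathbb{Z}/2)$, contradicting our null-homotopy assumption. Each $\gamma_i$, being a loop in $V$, is null-homotopic in $X$ and therefore bounds a topological disk $D_i\subset X$.

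Next I would show $V\subset D_i$ for some $i$. If $V$ instead lies on the non-disk side of every $\gamma_i$, a short nesting argument shows the $D_i$ are pairwise disjoint (if $D_i\subsetneq D_j$, then $\gamma_i\subset D_j$, which combined with $\gamma_i\subset\partial V$ forces $\gamma_i=\gamma_j$). Capping off, $\widehat V:=V\cup D_1\cup\cdots\cup D_n$ is a closed $2$-submanifold of the connected surface $X$, so $\widehat V=X$. But $\widehat V$ is built by capping a genus-$0$ surface along its $n$ boundary circles, so $\widehat V\cong S^2$, contradicting $g\geq 1$. Thus $V\subset D_i$ for some $i$.

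With $V$ inside the flat disk $D_i$, whose interior cone angles are all $\geq 2\pi$, the CAT$(0)$ isoperimetric inequality yields $\mathrm{area}(V)\leq\mathrm{area}(D_i)\leq L_{\gamma_i}^2/(4\pi)$. Since $V$ contains at least one triangle of the triangulation, $\mathrm{area}(V)\geq a_0$, so $L_{\gamma_i}\geq\sqrt{4\pi a_0}=2e$. In particular, the perimeter of $\mathbf{C}$, which is at least $L_{\gamma_i}$, exceeds $e$, completing the contrapositive. The main obstacle I anticipate is making the topological step $V\subset D_i$ fully rigorous and confirming the flat isoperimetric inequality at the singular CAT$(0)$ metric; both are classical in spirit but warrant care.
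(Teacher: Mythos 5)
Your proof is correct in outline and takes a genuinely different route from the paper's. The paper proves the proposition (via Lemma~\ref{lem:NonHom}) by a measure-theoretic argument: if $\mathbf C$ has small perimeter, then for every direction most points of $\mathbf C^c$ flow inside $\mathbf C^c$ for a long time, so by Fubini there is a point $p$ with a set of good directions of measure $>\frac12$; a second Fubini/pigeonhole step forces two straight segments emanating from $p$ in distinct directions $\theta_1,\theta_2$, of lengths $T,L$, to collide, and the resulting loop has period $\int_\gamma\omega = Te^{i\theta_1}-Le^{i\theta_2}\neq 0$, hence is not null-homologous. Your argument is instead purely topological/geometric: assuming the contrapositive, you show a component $V$ of $\mathbf C^c$ is planar (via the intersection pairing), its boundary circles are null-homotopic, a capping argument forces $V$ into a topological disk $D$, and the CAT$(0)$ isoperimetric inequality bounds $\mathrm{area}(V)\leq L(\partial D)^2/(4\pi)$, which contradicts $\mathrm{area}(V)\geq a_0$ if the perimeter of $\mathbf C$ is small. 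Both approaches are sound (the isoperimetric step you flag is a standard consequence of non-positive curvature, e.g.\ the Bol/Weil inequality, valid for flat metrics with cone angles $\geq 2\pi$; the capping/nesting step and the issue of the boundary possibly being a wedge of circles at a shared cone point can be smoothed with routine care). One genuine advantage of your argument is that it makes the dependence of $e$ on the triangulation explicit through $a_0$, which is actually necessary: the paper's Lemma~\ref{lem:NonHom} chooses a universal $\epsilon$ and asserts that small perimeter forces $\mathrm{area}(\mathbf C)\leq\epsilon^2$, but this implication fails if $\mathbf C$ is, say, all triangles but one tiny one; your choice $e=\frac12\sqrt{4\pi a_0}$ rules such $\mathbf C$ out from the start via the isoperimetric inequality on the complementary triangle. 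In short: different proof, correct, arguably more careful about the quantifiers; the paper's proof is more in the spirit of the effective, flow-based estimates used elsewhere in the paper, while yours is more topological and self-contained.
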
 

\begin{lem}
\label{lem:NonHom}
Let $(X,\omega)$ be a flat surface of genus $g$ and area 1 that is triangulated. There exists $\epsilon>0$ so that if 
 $\mathbf{C}$ 
 be a subset of the triangles and the perimeter of $\mathbf{C}$ is at most $\epsilon$ then there exists $p\in X$, $L,T, \in \mathbb{R}$, $\theta_1,\theta_2 \in S^1$ so that 
\begin{enumerate}
\item $p \in \mathbf{C}^c$
\item $(L,\theta_1)\neq (T,\theta_2)$
\item $\varphi_T^{\theta_1}(p)=\varphi_L^{\theta_2}(p)$ for some $T,L\neq 0$
\item $\varphi_\ell^{\theta_1}\cap \mathbf{C}=\varnothing$ for $0\leq \ell \leq T$ and $\varphi_\ell^{\theta_2}\cap \mathbf{C}=\varnothing$ for $0\leq \ell \leq L$.
\end{enumerate}
\end{lem}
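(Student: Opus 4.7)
The plan is to extract from Proposition \ref{prop:homotop nontrivial} a smooth closed flat-geodesic contained in the open set $\mathbf{C}^c$, and then to exhibit the two required trajectories as this closed geodesic traversed once and twice.

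First I would apply Proposition \ref{prop:homotop nontrivial} (shrinking $\epsilon$ so that $\epsilon \leq e$) to obtain a loop $\eta \subset \mathbf{C}^c$ that is non-trivial in $\pi_1(X)$. Then, among all loops in $\overline{\mathbf{C}^c}$ freely homotopic in $X$ to $\eta$, I would select a length-minimizer $\gamma$; existence is by a standard compactness argument together with the positivity of the systole of $(X,\omega)$, which bounds below the length of any non-trivial loop.

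Next I would further shrink $\epsilon$ so that $\gamma$ is disjoint from both $\partial \mathbf{C}$ and the singular set $\Sigma$. The idea is a cut-and-shortcut argument: if $\gamma$ touched $\partial \mathbf{C}$ at a point, then because $\partial \mathbf{C}$ has total length at most $\epsilon$, one could splice in a detour along $\partial \mathbf{C}$ to produce a strictly shorter curve in $\overline{\mathbf{C}^c}$ in the same free homotopy class, contradicting minimality; taking $\epsilon$ much smaller than the systole makes this work. A similar perturbation argument lets one arrange that $\gamma$ misses the cone points, so that $\gamma$ is a smooth closed orbit of the translation flow $\varphi^\theta$ in some direction $\theta$, of some period $T > 0$.

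Finally, I would take $p$ to be any point on $\gamma$ and set $\theta_1 = \theta_2 = \theta$, $T$ the period, and $L = 2T$. Then (1) $p \in \gamma \subset \mathbf{C}^c$; (2) $(L,\theta_1) = (2T,\theta) \neq (T,\theta) = (T,\theta_2)$ since $T > 0$; (3) $\varphi_T^{\theta_1}(p) = p = \varphi_{2T}^{\theta_2}(p)$; and (4) both orbit segments coincide with $\gamma$, traversed once and twice, and so stay in $\mathbf{C}^c$. The hard part will be making the cut-and-shortcut step rigorous: ensuring that the detour truly preserves the free homotopy class of $\gamma$ in $X$, and simultaneously forcing $\gamma$ to avoid the finite singular set $\Sigma$ so that it is a genuine smooth translation-flow orbit rather than a broken concatenation of saddle connections meeting at cone points.
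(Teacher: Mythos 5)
Your proposal is circular. You propose to invoke Proposition~\ref{prop:homotop nontrivial} to produce the non-trivial loop in $\mathbf{C}^c$, but in the paper Proposition~\ref{prop:homotop nontrivial} is itself proven \emph{from} Lemma~\ref{lem:NonHom}: the lemma supplies a point $p$ and two trajectory segments meeting at a common endpoint, and the proof of the proposition then observes that the concatenation of these segments has non-zero $\omega$-period and is therefore homotopically non-trivial. The lemma is the input to the proposition, not a consequence of it, so you cannot use the proposition at the first step.

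Even setting circularity aside, the plan has a serious obstruction that you correctly flag but cannot sidestep. A length-minimizer in a free homotopy class on a flat surface is a concatenation of saddle connections meeting cone points at angles $\geq \pi$ (Strebel's theorem, Proposition~\ref{prop:strebel}); such a representative passes through $\Sigma$ except in the special case where the class is represented by a core curve of a flat cylinder. There is no perturbation that turns a broken geodesic through cone points into a smooth closed translation-flow orbit: if the homotopy class is not a cylinder class, no such orbit exists at all. Shrinking $\epsilon$ cannot repair this, because it is a property of the surface's flat geometry, not of $\mathbf{C}$. Likewise, the cut-and-shortcut step is not clearly valid: a minimizer constrained to lie in $\overline{\mathbf{C}^c}$ may legitimately run along $\partial\mathbf{C}$, and splicing in a detour along $\partial\mathbf{C}$ does not obviously shorten it.

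For comparison, the paper's proof is a short averaging argument. Since the perimeter of $\mathbf{C}$ is at most $\epsilon$, for every direction $\theta$ the set of points whose flow segment of length $100$ meets $\mathbf{C}$ has measure at most $\approx 100\epsilon$, so a large-measure set of $p$ avoids $\mathbf{C}$ for all times in $[0,100]$. Fubini then gives a single $p \in \mathbf{C}^c$ for which the set $G_p$ of good directions has measure $>1/2$, and the total mass $\int_0^{100}\!\int_{G_p}\chi_{\mathbf{C}^c}(\varphi_t^\theta(p))\,d\theta\,dt$ exceeds the area of $\mathbf{C}^c$, forcing two distinct $(\theta,t)$ pairs to land at the same point. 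No geodesics, no minimization, no avoidance of $\Sigma$ is needed, and the argument does not presuppose the existence of any non-trivial loop in $\mathbf{C}^c$.
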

\begin{proof}
  Choose $\epsilon<\frac 1 {20000}$ and let $\mathbf{C}\subset X$ have perimeter less than $\epsilon$. We claim 
that for each $\theta$, 
\begin{equation}\mu_\omega(\{p \notin \mathbf{C}: \varphi_\ell^{\theta}(p) \cap \mathbf{C} \neq \varnothing \text{ for some }0\leq \ell \leq 100\})<\frac 1 {100}.
\end{equation}
 Indeed, 
by our perimeter condition the area of $\mathbf{C}$  is at most $\epsilon^2$. For each $\theta$ the measure of $p$ so that the flow in direction $\theta$ between $0$ and $T$ crosses the boundary of the complex is at most $\epsilon T$ establishing the equation. Thus for every $\theta$ we have 
$$\mu_\omega(\{p\in \mathbf{C}^c: \varphi^{\theta}_t(p)\cap \mathbf{C}=\varnothing \text{ for all }0\leq \ell \leq 100\})>.99.$$
So by Fubini's theorem we have that there exists $p\in \mathbf{C}^c$
$$\lambda(\{\theta: \varphi^\theta_\ell(p)\cap \mathbf{C}=\varnothing \text{ for all }0\leq \ell\leq 100\})>\frac 1 2.$$ Call the set of angles $G_p$. 
 Now  $\int_0^{100}\int_{G_p}\chi_{\mathbf{C}^c}(\varphi_t^\theta(p))d\theta dt=100\lambda(G_p)> \lambda^2(\mathbf{C}^c)$. This implies that there exists $x \in \mathbf{C}^c$, $(\theta_1,T)\neq (\theta_2,L)\in G_p\times (0,100)$ with $\varphi_{T}^{\theta_1} (p)=x=\varphi_L^{\theta_2} (p)$.
\end{proof}

\begin{proof}[Proof of Proposition \ref{prop:homotop nontrivial}]
By Lemma \ref{lem:NonHom}, there exists $q \in \mathbf{C}^c$, $(\theta_1,T)\neq (\theta_2,L)\in G_p\times (0,100)$ with $\varphi_{T}^{\theta_1} (p)=q=\varphi_L^{\theta_2} (p)$.
If $\theta_1= \theta_2$ then since $T\neq L$ there exists a closed geodesic in direction $\theta_1$ from $q$ to $q$ and therefore there is a curve which is not homotopically trivial. So now we assume $\theta_1\neq \theta_2$. We claim that
$$\gamma=\bigcup_{\ell\in [0,T]}\varphi_\ell^{\theta_1}(p)\cup \bigcup_{[\ell \in [0,L]}\varphi_{-\ell}^{\theta_2}(q)$$
is a homotopically non-trivial curve. Indeed, $$\int_{\gamma}\omega=T\cos(\theta_1)+iT\sin(\theta_1)-L\cos(\theta_2)-iL\sin(\theta_2)$$ which is non-zero because $\theta_1\neq \theta_2$ and so $\gamma$ is not homotopically trivial.
\end{proof}
The next proof is involved. The idea is to iteratively apply Proposition \ref{prop:induct} to triangulate larger and larger subsets of the surface by short saddle connections, eventually triangulating the entire surface. Each time we apply it, the constants to plug into the statement of Proposition \ref{prop:induct} get worse. See Equation (\ref{eqn:timeBND}) and the line following it. We need to apply it the number of times equal to the number of saddle connections in a triangulation of the surface. With this in mind, we begin the proposition choosing constants so that we will arrive at contradictions in Equations (\ref{eqn:sTimes}) and (\ref{eq:contradiction}). 
\begin{proof}[Proof of Proposition \ref{prop:contr}] We prove this by contradiction. Pick $\alpha > 1$ such that $\alpha x  \geq 2 + 2((2+x)/c + 3x + 10)$ for all $x\geq 1$, where $c$ is the constant from Proposition \ref{prop:baby}, and set $\epsilon_0$ so that
\begin{equation}\label{eq:new}\epsilon_0 \leq \frac{1}{4} \alpha^{-\mathcal{M}}
\end{equation}
where $\mathcal{M}$ is the topological constant from Remark \ref{rmk:mbound}, and $\mathcal{M}\geq 2^{\mathcal{M}}\epsilon_0$. We now pick some $\epsilon\in(0,\epsilon_0)$.

Let $t_*$ be large enough so that
\begin{enumerate}[label=(\Roman*)]
\item it satisfies Proposition \ref{prop:induct} using $\epsilon$ as above and $C=1$;
\item \label{tcond:tbig} we have that $t> 3C^*_\mathcal{M}\epsilon \log (t)$ for all $t>t_*$, where $C^*_\mathcal{M}$ is defined below and only depends on the topology of the surface;
\item \label{item:less e} we have $\mathcal{M}t^{-\frac 1 2 +C_\mathcal{M}^*\epsilon}_*<e$, where $e$ is the constant from Proposition \ref{prop:homotop nontrivial};
\item $2^\mathcal{M} \leq t_*$;
\item For any $s>t_*$ and for all $\lambda\in [5,C^*_\mathcal{M}]$, we have $6s^{-\frac{1}{2} + (\lambda-1)\epsilon}\leq (s + \log(s^{\lambda-2}))^{-\frac{1}{2} + \lambda\epsilon}$, where $C^*_\mathcal{M}$ is defined below and depends only on the topology of the surface;
\item \label{item:area}$\mathcal{M}^2t_*^{-\frac{1}{2}} < 1$.
\end{enumerate}
From now on we assume that $t_0 \geq t_*$.

Let $\Gamma_{t_0}$ be a collection of saddle connections so that $|\Gamma_{t_0}| = \delta_{t_0}(\omega)$, let $\mathfrak{C}_m$ be a complex such that $\partial \mathfrak{C}_m = \Gamma_{t_0}$, and let $m$ be the number of saddle connections of $\Gamma_{t_0}$. Setting $C_m = 1$, we define recursively
$$\rho_m = \frac{2+C_m}c  + 3C_m + 10 \hspace{.5in}\mbox{ and }\hspace{.5in} C_{m+1} = 2+2\rho_m$$
to obtain $\{\rho_m,\dots, \rho_\mathcal{M}\}$ and $\{C_m,\dots, C_\mathcal{M}\}$. Note that by our choice of $\alpha$ and the way we recursively defined $C_k$ from $C_{k-1}$, we have that $C_k \leq \alpha C_{k-1}$ and that, therefore, 
\begin{equation}
\label{eqn:alpha}
 C_\mathcal{M}\leq \alpha^\mathcal{M} \leq (4\epsilon_0)^{-1}.
\end{equation}
Note that there is a universal $C^*_\mathcal{M}$ so that $C_k\leq C_{\mathcal{M}}^*$ for all $k\in\{m,\dots, \mathcal{M}\}$. Indeed, starting the recursive procedure with $m=1$, $C_1 = 1$, and recursively defining $C_2< \cdots<  C_\mathcal{M}$ as above, we get that $C_\mathcal{M} = C^*_\mathcal{M}$. 

Set $s_m = t_0$. At step $m\in\mathbb{N}$, we have a complex $\mathfrak{C}_m$ on $g_{s_m}\omega$ built from saddle connections bounded above by $s_m^{-\frac{1}{2} + C_m\epsilon}$. 
 By \ref{item:less e}, the perimeter of this complex is at most $ms_m^{-\frac 1 2 +C_m\epsilon}<\mathcal{M}s^{-\frac 1 2 +C_{\mathcal{M}}\epsilon}<e$, so by Proposition \ref{prop:homotop nontrivial} we satisfy the assumption that $\mathfrak{C}_m^c$ is not homotopically trivial in Proposition \ref{prop:induct}.
 By our assumption on the smallness of $\epsilon_0$ we have the assumption that $\delta_\ell(s)<\ell^{-\frac 1 2 +\epsilon}$ for all $\ell\in [s_m,s_m+\log(s_m^{2\rho_m\epsilon})]$ from Proposition \ref{prop:induct}. 
By Proposition \ref{prop:induct}  we have $\Gamma_{s_m + \log(s_m^{2\rho_m\epsilon})}(\omega)\not\subset \mathfrak{C}_m$.
We claim that
$$|g_{s_m}\Gamma_{s_m+\log(s_m^{2\rho_m\epsilon})}| \leq s_m^{-\frac{1}{2} + (1+2\rho_m)\epsilon}.$$
Indeed, this is because
$$|g_{s_m+\log(s_m^{2\rho_m\epsilon})}\Gamma_{s_m+\log(s_m^{2\rho_m\epsilon})}|\leq (s_m+\log(s_m^{2\rho_m\epsilon}))^{-\frac1 2 +\epsilon}$$
by our assumption on the length of systoles, and so
$$|g_{s_m}\Gamma_{s_m+\log(s_m^{2\rho_m \epsilon})}|\leq e^{\log(s_m^{2\rho_m\epsilon}) }|\Gamma_{s_m+\log(s_m^{2\rho_m\epsilon})}|\leq s_m^{2\rho_m\epsilon}(s_m+\log(s_m^{2\rho_m\epsilon}))^{-\frac1 2 +\epsilon}.$$
Let $s_{m+1} = s_m + \log(s_m^{2\rho_m\epsilon})$. By adding a saddle connection to $\mathfrak{C}_m$ through Lemma \ref{lem:combine} we obtain a new complex $\frak{C}_{m+1}$ on $g_{s_{m+1}}\omega$ whose saddle connections have lengths bounded above by $6s_m^{-\frac{1}{2} + (1+2\rho_m)\epsilon}$.  By the assumption on largeness of $t_0$ in (V), we have that
$$6s_m^{-\frac{1}{2} + (1+2\rho_m)\epsilon}\leq (s_m + \log(s_m^{2\rho_m}))^{-\frac{1}{2} + (2+2\rho_m)\epsilon}$$
and thus that
\begin{equation}
\label{eqn:timeBND}
6s_m^{-\frac{1}{2} + (1+2\rho_m)\epsilon}\leq (s_m + \log(s_m^{2\rho_m}))^{-\frac{1}{2} + (2+2\rho_m)\epsilon} \leq s_{m+1}^{-\frac{1}{2} + (2+2\rho_m)\epsilon},
\end{equation}
i.e., so that $\mathfrak{C}_{m+1}$ is a $s_{m+1}^{-\frac{1}{2} + C_{m+1} \epsilon}$-complex.

By iterating the above recursive procedure $\mathcal{M} - m$ times, we create a sequence of complexes $\mathfrak{C}_m\subset \mathfrak{C}_{m+1}$ each of which has that 
  perimeter less than $e$ (by \ref{item:less e} and the fact that $j\leq \mathcal{M}$ and $C_j\leq C^*_{\mathcal{M}}$) ending with $\mathfrak{C}_\mathcal{M} = X$, which is a $s_\mathcal{M}^{-\frac{1}{2} + C_\mathcal{M}\epsilon}$-complex. For any $k\in\{m+1,\dots, \mathcal{M}\}$, by assumption (II) on largeness of $t_0$ and since $C_{\mathcal{M}}>\rho_j$ for all $j<\mathcal{M}$, we can bound
\begin{equation}
\label{eqn:sTimes}
\begin{split}
s_k &\leq (s_{k-1}+\log(s_{k-1}^{2\rho_{k-1}\epsilon})) \leq 2s_{k-1} \leq 2(s_{k-2} + \log(s_{k-2}^{2\rho_{k-2}\epsilon})) \\
&\leq 2\cdot2s_{k-2} \leq 2\cdot 2(s_{k-3}+\log(s_{k-3}^{2\rho_{k-3}\epsilon}))\leq \cdots \leq 2^\mathcal{M}s_m
\end{split}
\end{equation}
for any $k\in\{m,\dots, \mathcal{M}\}$. The procedure of adding saddle connections to construct larger and larger complexes until obtaining $\mathfrak{C}_\mathcal{M} = X$ was done in time
\begin{equation} \label{eq:contradiction}
\begin{split}
\sum_{k=m}^\mathcal{M} r_k &\leq \sum_{k=m}^\mathcal{M} \log (s_k^{2\rho_k\epsilon}) \underset{{\footnotesize \mbox{by } (\ref{eqn:sTimes})}}{\leq} \sum_{k=m}^\mathcal{M} \log ((2^\mathcal{M}s_m)^{2\rho_k\epsilon}) \underset{{\footnotesize \mbox{by (IV)}}}{\leq} \sum_{k=m}^\mathcal{M} \log ( (s_m^2)^{2\rho_k\epsilon}) \\
&\leq \sum_{k=m}^\mathcal{M} \log ( (s_m^2)^{C^*_\mathcal{M}\epsilon}) \leq \mathcal{M} \log (s_m^{2C^*_\mathcal{M}\epsilon}) \leq \mathcal{M}\log (s_m),
\end{split}
\end{equation}
where the last inequality follows from (\ref{eq:new}) and (\ref{eqn:alpha}). In other words, we see that this procedure was done in the interval $[s_m,s_m+\log(s_m^{\mathcal{M}})]$. Using the estimate (\ref{eqn:alpha}), since ${\mathfrak{C}_\mathcal{M} = X}$ is a $s_\mathcal{M}^{-\frac{1}{2} + C_\mathcal{M}\epsilon}$-complex, by Lemma \ref{lem:area}, the area is bounded above by ${\mathcal{M}^2s_\mathcal{M}^{-1 + 2C_\mathcal{M}\epsilon} \leq \mathcal{M}^2s_\mathcal{M}^{-\frac{1}{2}} < 1}$ (by \ref{item:area}). However, this contradicts that the area of $(X,\omega)$ is 1, and the result follows.
\end{proof}
\section{Unique ergodicity}
\label{sec:ue}
\begin{prop}
\label{prop:gap}
Let $(X,\omega)$ be a flat surface. Suppose there is a $ \frac 1 2 > c > 0$ and a set of positive upper density $\mathcal{S}$ such that 
$$\frac{-\log \delta_t(\omega)}{\log\, t} \leq \frac{1}{2} - c$$
for $t\in \mathcal{S}$. Then the vertical flow on $(X,\omega)$ is uniquely ergodic.
\end{prop}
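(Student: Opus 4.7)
The plan is to invoke Theorem \ref{thm:UE} by showing that the divergence condition $\int_0^\infty \delta_t^2(\omega)\, dt = +\infty$ is forced by the hypothesis on $\mathcal{S}$. The content is essentially a direct conversion of the logarithmic lower bound on $\delta_t$ along $\mathcal{S}$ into a lower bound on this integral, with no deeper geometric input needed at this step.

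First I would rearrange the inequality $-\log \delta_t(\omega)/\log t \leq 1/2 - c$ to the pointwise bound $\delta_t(\omega) \geq t^{-1/2+c}$ valid for all $t \in \mathcal{S}$ large enough. Assuming without loss of generality that $c < 1/2$, squaring gives $\delta_t(\omega)^2 \geq t^{-1+2c}$. Since the exponent $-1+2c$ is negative, the function $s \mapsto s^{-1+2c}$ is monotone decreasing, so for every $t \in \mathcal{S} \cap [1,T]$ we have the uniform lower bound $\delta_t(\omega)^2 \geq T^{-1+2c}$.

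Next I would use positive density of $\mathcal{S}$ to produce a constant $\alpha > 0$ and an unbounded sequence $T_n \to \infty$ satisfying $\lambda(\mathcal{S}\cap[0,T_n]) \geq \alpha T_n$, where $\lambda$ is Lebesgue measure. Restricting the integral $\int_0^{T_n} \delta_t^2(\omega)\, dt$ to the set $\mathcal{S}\cap[1,T_n]$ and applying the uniform lower bound yields
$$\int_0^{T_n} \delta_t^2(\omega)\, dt \;\geq\; T_n^{-1+2c}\,\lambda(\mathcal{S}\cap[1,T_n]) \;\geq\; \tfrac{\alpha}{2}\, T_n^{\,2c}$$
for $n$ large enough. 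Since $c > 0$ this diverges as $n \to \infty$, and because $T \mapsto \int_0^T \delta_t^2(\omega)\, dt$ is monotone non-decreasing, the full integral $\int_0^\infty \delta_t^2(\omega)\, dt$ is infinite. Theorem \ref{thm:UE} then concludes that the vertical flow on $(X,\omega)$ is uniquely ergodic. There is no real obstacle: all of the substantive work lies in producing such a set $\mathcal{S}$ (the content of the preceding sections); here one only needs to observe that positive density easily dominates the slow decay of $t^{-1+2c}$.
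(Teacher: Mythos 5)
Your proof is correct and follows essentially the same route as the paper: convert the hypothesis into the pointwise bound $\delta_t(\omega)\geq t^{-1/2+c}$ on $\mathcal{S}$, use positive (upper) density to find $T_n\to\infty$ with $\lambda(\mathcal{S}\cap[0,T_n])\gtrsim T_n$, and lower bound $\int_0^\infty \delta_t^2\,dt$ so that it diverges, then invoke Theorem~\ref{thm:UE}. The only cosmetic difference is that the paper lower-bounds $\int_{\mathcal{S}\cap[T,s_k]}t^{-1+2c}\,dt$ by a rearrangement comparison with the rightmost interval of the same measure and then computes that integral explicitly, whereas you use the cruder but sufficient constant lower bound $t^{-1+2c}\geq T_n^{-1+2c}$ on $[1,T_n]$; both give a lower bound growing like a positive constant times $T_n^{2c}$, which is all that is needed.
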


\begin{proof}
By definition we have that
$$\bar{d}(\mathcal{S}) \equiv \limsup_{s\in\mathcal{S}}\frac{|\mathcal{S}\cap [0,s]|}{s} > 0.$$
Let $s_k\rightarrow\infty$ be a sequence of times such that 
$$\bar{d}(\mathcal{S})-\frac{|\mathcal{S}\cap[0,s_k]|}{s_k}\leq \frac{1}{k}.$$

Let $T = \max\{1,\inf \mathcal{S}\}$ and $K>0$ be such that $s_k(1-(\bar{d}(\mathcal{S}) - \frac{1}{k})) > T$ for all $k>K$. Since $t^{c -\frac{1}{2}}$ is a decreasing function,
\begin{equation*}
\label{eqn:sysBnd}
\begin{split}
\int_0^\infty \delta_t^2(\omega)\, dt &\geq \int_\mathcal{S} \delta_t^2(\omega)\, dt \geq \int_\mathcal{S} t^{-1 + 2c}\, dt \geq \int_{\mathcal{S}\cap[T, s_k]}t^{-1 + 2c}\, dt \\
&\geq \int_{s_k(1-(\bar{d}(\mathcal{S}) - \frac{1}{k}))}^{s_k} t^{-1+2c}\, dt= \frac{s_k^{2c}}{2c}(1-(1-(\bar{d}(\mathcal{S}) - k^{-1}))^{2c})
\end{split}
\end{equation*}
for all $k>K$. Since $s_k\rightarrow\infty$, we have that $\int_0^\infty\delta_t^2(\omega)\, dt = \infty$ and therefore, by Theorem \ref{thm:UE}, the vertical flow on $(X,\omega)$ is uniquely ergodic.
\end{proof}
\begin{lem}
\label{lem:density}
Let $(X,\omega)$ be a flat surface that satisfies the logarithmic law (\ref{eqn:SystLogLaw}). There exists a $\lambda_0\in (0,\frac{1}{2})$ such that the set
$$\mathcal{S}_\lambda(\omega) := \left\{ s\in \mathbb{R}^+ : \delta_s(\omega) > s^{-\frac{1}{2} + \lambda}\right\}$$
has positive lower density for any $\lambda\in(0,\lambda_0)$.
\end{lem}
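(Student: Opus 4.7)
The strategy is to chain Proposition \ref{prop:contr} with the fact that the systole is log-Lipschitz under the Teichmüller flow, so that the single large-systole time produced by Proposition \ref{prop:contr} in every window $[t, t+\mathcal{M}\log t]$ widens to an interval of length proportional to $\log t$ lying in $\mathcal{S}_\lambda(\omega)$. Tiling $[t_*, N]$ by consecutive such windows then forces positive lower density. Set $\lambda_0 := \epsilon_0/2$ where $\epsilon_0$ is the constant from Proposition \ref{prop:contr}. Given $\lambda \in (0, \lambda_0)$, put $\epsilon := 2\lambda \in (0, \epsilon_0)$. The log law (\ref{eqn:SystLogLaw}) implies in particular that $\delta_t(\omega) > t^{-1/2-\epsilon}$ for all $t > t_*$, for some $t_*$ depending on $\epsilon$, which is the hypothesis of Proposition \ref{prop:contr}. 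Thus for every $t > t_*$ there is a $u = u(t) \in [t, t + \mathcal{M}\log t]$ with $\delta_u(\omega) > u^{-1/2+\epsilon}$.

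To widen this point to an interval, note that for every saddle connection $\gamma$ one has $|g_s \gamma|^2 = v(\gamma)^2 e^{2s} + h(\gamma)^2 e^{-2s}$, whence $|g_s\gamma| \geq e^{-|s-u|}|g_u \gamma|$ and therefore $\delta_s(\omega) \geq e^{-|s-u|}\delta_u(\omega)$ for all $s,u$. Setting $r_u := (\epsilon/4)\log u$, for any $s$ with $|s-u| \leq r_u$ we obtain
$$\delta_s(\omega) \;>\; e^{-r_u}\,u^{-1/2+\epsilon} \;=\; u^{-1/2+3\epsilon/4}.$$
Since $r_u / u \to 0$ the ratio $s/u$ stays in $[1/2, 2]$ for $u$ large, and because $\lambda = \epsilon/2 < 3\epsilon/4$ the extra factor $u^{\epsilon/4}$ absorbs the bounded comparison between $u^{-1/2+\lambda}$ and $s^{-1/2+\lambda}$. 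Hence $\delta_s(\omega) > s^{-1/2+\lambda}$ throughout $[u - r_u, u + r_u]$, so this interval lies in $\mathcal{S}_\lambda(\omega)$.

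Finally, partition $[t_*, N]$ into disjoint consecutive windows $[t_k, t_{k+1}]$ with $t_{k+1} := t_k + \mathcal{M}\log t_k$. In each window the point $u_k$ above exists, and since $r_{u_k} = (\epsilon/4)\log u_k < \mathcal{M}\log t_k$, at least one of the two halves of the widened interval $[u_k - r_{u_k}, u_k + r_{u_k}]$ lies inside $[t_k, t_{k+1}]$. Summing these disjoint contributions and using $\sum_{k:t_k\leq N}\mathcal{M}\log t_k \geq N - t_*$ gives
$$|\mathcal{S}_\lambda(\omega) \cap [t_*, N]| \;\geq\; \sum_k \frac{\epsilon}{4}\log t_k \;\geq\; \frac{\epsilon}{4\mathcal{M}}(N - t_*),$$
so the lower density is at least $\epsilon/(4\mathcal{M}) > 0$, proving the lemma. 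The main delicacy is uniform control over the widening inequality across each window; this is automatic thanks to the positive gap $3\epsilon/4 - \lambda = \epsilon/4$, which absorbs all sub-linear corrections once $t_*$ is chosen large enough.
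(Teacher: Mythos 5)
Your proof is correct and follows essentially the same route as the paper: invoke Proposition~\ref{prop:contr} to produce a single large-systole time $u$ in each window of length $\mathcal{M}\log t$, widen it to an interval around $u$ using the log-Lipschitz bound $\delta_s(\omega)\geq e^{-|s-u|}\delta_u(\omega)$, and tile $[t_*,N]$ by consecutive windows to obtain positive lower density. If anything your exponent bookkeeping in the widening step is tighter than the paper's, which widens to an interval of length $\frac{\epsilon}{4}\log(t_0^{\mathcal{M}})=\frac{\mathcal{M}\epsilon}{4}\log t_0$ (too long for the Lipschitz bound to preserve $\delta_s>s^{-1/2+\epsilon/4}$ once $\mathcal{M}\geq 3$), whereas your $r_u=\frac{\epsilon}{4}\log u$ is the correct scale, at the cost of the smaller but still positive density $\frac{\epsilon}{4\mathcal{M}}$.
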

\begin{proof} By Proposition \ref{prop:contr} for all $\epsilon$ small enough we have that for all large enough $t_0$ there exists $u\in J_0 := [t_0,t_0+\log(t_0^\mathcal{M})]$ so that $\delta_u(\omega)>\frac 1 {u^{\frac 1 2 -\epsilon}}$. It follows that on the intervals $(u,u+\frac {\epsilon}4\log t_0^\mathcal{M})$ and $(u-\frac{\epsilon}4\log(t_0^\mathcal{M}),u)$ 
 we have that $\delta_s(\omega) > \frac 1 {s^{\frac 1 2 -\frac {\epsilon} 4}}$ for all $s$ in this interval of length at least $\frac{\epsilon}{4}\log (t_0^\mathcal{M})$. One of these is contained in $[t_0,t_0+\log(t_0^\mathcal{M})]$.  Therefore, the fraction of time in $J_0$ that $\delta_s(\omega) > \frac 1 {s^{\frac 1 2 -\frac {\epsilon} 4}}$ is at least $\frac{\epsilon}{4}$. Letting $t_k = t_{k-1} + \log(t_{k-1}^\mathcal{M})$ and considering $J_k = [t_k, t_k + \log(t_k^\mathcal{M})]$ we proceed in the same way to find an interval of at least $\epsilon/4$ fraction of $J_k$ with $\delta_s(\omega) > \frac 1 {s^{\frac 1 2 -\frac {\epsilon} 4}}$ in this interval. Since we cover $[t_0,\infty)$ in this way, the result follows.
\end{proof}
\begin{proof}[Proof of Main Theorem \ref{thm:main}]
Let $(X,\omega)$ satisfy the logarithmic law (\ref{eqn:SystLogLaw}). By Lemma \ref{lem:density}, for any $\varepsilon$ small enough, $\mathcal{S}_\varepsilon(\omega)$ has positive lower density. The result follows from Proposition \ref{prop:gap}.
\end{proof}
\section{Geometry and extremal length}
\label{sec:geometry}
We briefly review some results about the geometry of moduli space and we follow the survey \cite{MasurGeometrySurvey}. For a Riemann surface $X$, let $\mathcal{Q}$ be the set of homotopy classes of homotopically nontrivial essential simple closed curves on $X$.
\begin{defin}
The \textbf{extremal length} of a class $\alpha\in \mathcal{Q}$ on $X$ is the quantity
$$\mathrm{Ext}_X(\alpha) = \sup_\sigma \frac{L_\sigma^2(\alpha)}{A(\sigma)}$$
where the supremum is over the set of metrics $\sigma$ which are conformally-equivalent on $X$, $L_\sigma^2(\alpha)$ is the infimum of all lengths of curves which are in the homotopy class $\alpha$ measured with respect to the metric $\sigma$, and $A(\sigma )$ is the area of $X$ as measured by the metric $\sigma$.
\end{defin}
The following theorem, due to Kerckhoff \cite[Theorem 4]{Kerckhoff}, relates extremal length to distance in moduli space.
\begin{thm}
\label{thm:kerckhoff}
The Teichm\"uller distance between two surfaces $X,Y$ is given by
\begin{equation}
\label{eqn:kerckhoff}
dist(X,Y) = \sup_{\alpha\in \mathcal{Q} } \frac{1}{2} \log \frac{\mathrm{Ext}_X(\alpha)}{\mathrm{Ext}_Y(\alpha)}.
\end{equation}
\end{thm}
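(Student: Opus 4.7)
The plan is to prove both inequalities separately: the easy direction follows from the quasi-invariance of extremal length under quasiconformal maps, while the harder direction invokes Teichm\"{u}ller's extremal mapping theorem together with an approximation of measured foliations by simple closed curves.

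For the upper bound $\sup_\alpha \tfrac{1}{2}\log \tfrac{\mathrm{Ext}_X(\alpha)}{\mathrm{Ext}_Y(\alpha)} \leq dist(X,Y)$, I would use the Gr\"{o}tzsch-type quasi-invariance: if $f : X \to Y$ is $K$-quasiconformal, then for every $\alpha \in \mathcal{Q}$,
$$K^{-1}\,\mathrm{Ext}_X(\alpha) \leq \mathrm{Ext}_Y(f(\alpha)) \leq K\,\mathrm{Ext}_X(\alpha).$$
This follows directly from the definition of extremal length by pulling back a conformal metric $\sigma$ nearly realizing the supremum defining $\mathrm{Ext}_Y(f(\alpha))$: the length integral along any representative is distorted by a factor controlled by $K^{1/2}$, while the area integral is distorted by at most $K$. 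Taking logs, dividing by two, and then taking the infimum over all quasiconformal maps $f : X \to Y$ yields the bound immediately from the definition $dist(X,Y) = \tfrac{1}{2}\inf_f \log K(f)$.

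For the reverse inequality I would appeal to Teichm\"{u}ller's existence theorem, which provides an extremal quasiconformal map $f : X \to Y$ with maximal dilatation exactly $K = e^{2\,dist(X,Y)}$, whose Beltrami coefficient has the special form $k\,\bar{q}/|q|$ for some unit-norm holomorphic quadratic differential $q$ on $X$. This map contracts distances along the horizontal foliation $F_h(q)$ by $K^{-1/2}$ and expands along the vertical one by $K^{1/2}$. The analytic engine is the Jenkins--Strebel estimate
$$\mathrm{Ext}_X(\alpha) \geq \frac{i(\alpha, F_h(q))^2}{\|q\|},$$
with equality in the limit when the Jenkins--Strebel differential associated to $\alpha$ coincides (projectively) with $q$. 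Choosing a sequence $\alpha_n \in \mathcal{Q}$ whose weighted isotopy classes converge projectively to $F_h(q)$, and computing $\mathrm{Ext}_Y(\alpha_n)$ with the pushed-forward differential $f_* q$ whose horizontal foliation is the $K^{-1/2}$-scaling of $F_h(q)$, one finds $\mathrm{Ext}_X(\alpha_n)/\mathrm{Ext}_Y(\alpha_n) \to K$, realizing the supremum.

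The hard part will be justifying this approximation rigorously: one must invoke the density of weighted simple closed curves in the space of projective measured foliations, and the continuity of $\mathrm{Ext}_X$, $\mathrm{Ext}_Y$ under measured-foliation convergence. This is essentially the technical core of Kerckhoff's original argument, which uses that Jenkins--Strebel differentials (those whose horizontal trajectories are all closed) are dense among quadratic differentials, so that the extremal length function can be read off foliation-theoretically in the limit. Once this continuity and density are in hand, the supremum in the theorem is attained as a limit along such a curve sequence, and the two inequalities combine to give equality.
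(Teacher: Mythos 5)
The paper does not prove this theorem; it cites it directly as \cite[Theorem 4]{Kerckhoff}, so there is no internal proof to compare against. Your sketch is a correct outline of Kerckhoff's original argument: the easy inequality via Gr\"otzsch quasi-invariance of extremal length, and the reverse via Teichm\"uller's existence theorem plus the Jenkins--Strebel lower bound $\mathrm{Ext}_X(\alpha)\geq i(\alpha,F_h(q))^2/\|q\|$ combined with continuity of $\mathrm{Ext}$ on $\mathcal{MF}$ and density of weighted simple closed curves; you correctly identify the continuity/density step as the technical core and your description of how the extremal differential scales heights by $K^{-1/2}$ so the ratio along the approximating sequence tends to $K$ is consistent.
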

\subsection{Low genus}
In section \ref{sec:example} we will consider a genus two example. We review some background for flat surfaces of low genus which we will use in \S \ref{sec:example}. For context that will be useful in our construction, we recall the following result of McMullen \cite[Theorem 1.7]{McMullen}.
\begin{thm*}
\label{thm:mcmullen}
Let $(X,\omega)$ be a flat surface of genus two. Then it can be written, in infinitely many ways, as a connected sum $(X,\omega) = (E_1,\omega_1) \# (E_2,\omega_2)$ with $(E_i,\omega_i)\in\mathcal{A}_1$.
\end{thm*}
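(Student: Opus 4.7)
My plan is to produce connected sum decompositions geometrically from separating geodesic curves on $(X,\omega)$, and then exhibit infinitely many inequivalent such curves. Recall that the connected sum construction $(E_1,\omega_1)\#_v(E_2,\omega_2)$ produces a genus-two flat surface carrying a distinguished simple closed geodesic — the image of the glued slit — which passes through both cone points and separates the surface into two genus-one pieces with boundary. Conversely, given $(X,\omega)$ of genus two together with a simple closed geodesic $\alpha$ that separates $X$ into two genus-one pieces and is built from a pair of parallel saddle connections from $p_1$ to $p_2$ with common holonomy $v$ (in the $\mathcal{H}(1,1)$ case; a closed loop at the zero in the $\mathcal{H}(2)$ case), cutting along $\alpha$ and capping off the resulting slits yields flat tori $(E_1,\omega_1)$ and $(E_2,\omega_2)$. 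Thus the theorem reduces to producing infinitely many such separating geodesic curves.

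For existence of at least one such curve, I would invoke density of completely periodic directions (a classical result of Masur): the set of directions $\theta$ for which the translation flow on $(X, r_\theta\omega)$ decomposes $X$ into a disjoint union of flat cylinders is dense in $S^1$. In a completely periodic direction on a genus-two surface, the cylinder decomposition contains at most three cylinders, and a combinatorial analysis of how cone points can be distributed on the saddle connections bounding the cylinders shows that, outside the degenerate one-cylinder case, there is always a cylinder whose core curve is homologically trivial (equivalently, separating). The core of such a separating cylinder, after pushing the cylinder to its boundary, gives the desired geodesic $\alpha$ made of two parallel saddle connections between $p_1$ and $p_2$ (or a closed loop at the single zero in $\mathcal{H}(2)$).

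To obtain infinitely many decompositions, I would argue on two levels. First, the set of completely periodic directions is dense, so we obtain infinitely many decompositions with pairwise non-parallel slit holonomies $v\in\mathbb{C}$, and these are manifestly inequivalent since the slit direction is an invariant of the decomposition. Second, within a fixed direction, one can vary the slit length along the cylinder by sliding the boundary saddle connections (the Dehn-twist / shear action in the cylinder direction), producing a one-parameter family of decompositions; generically these yield pairwise non-isometric tori $(E_1,\omega_1)$ since the conformal modulus of $E_i$ depends continuously and non-trivially on the slit length. Combining, we obtain infinitely many decompositions of $(X,\omega)$ as $(E_1,\omega_1)\#_v(E_2,\omega_2)$.

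The main obstacle I anticipate is the combinatorial case analysis in the existence step: verifying that for some (in fact, most) completely periodic directions the cylinder decomposition contains a separating cylinder. In $\mathcal{H}(1,1)$ the possible cylinder decompositions must be enumerated (by number of cylinders and by how the two cone points attach to boundary saddle connections), and one must rule out that all periodic directions are of the one-cylinder type, which would be a degenerate situation ruled out by the density of distinct periodic directions on a non-Veech surface — and handled separately on Veech surfaces by direct verification. Once existence of one decomposition is secured the infinitude is comparatively routine via the shear/deformation argument above.
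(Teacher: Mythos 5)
The paper does not prove this statement; it is quoted directly from McMullen, so there is no ``paper's proof'' to match against. Your proposal has the right overall shape (realize splittings geometrically as slits and produce many of them via periodic directions), but the two load-bearing claims in your existence step are both wrong, and the gap is not cosmetic.

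First, ``density of completely periodic directions'' is not what Masur proved. Masur's density theorem gives a dense set of directions in which there is \emph{at least one} cylinder (a closed regular trajectory). That a direction with a cylinder is \emph{completely} periodic --- the whole surface decomposing into cylinders --- is a much stronger condition; the complement of the cylinder is a genus-one subsurface with boundary on which the flow may well be minimal. Complete periodicity of all cylinder directions in genus two is a delicate property (studied by Calta and McMullen) that holds only for special surfaces, not in general.

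Second, and more seriously, the claim that outside the one-cylinder case some cylinder core is separating is false, and in $\mathcal{H}(2)$ it is \emph{impossible}. If $\gamma$ were the separating core of a cylinder on $(X,\omega)\in\mathcal{H}(2)$, cutting along $\gamma$ gives two genus-one pieces with geodesic boundary, and the unique cone point (angle $6\pi$) lies entirely on one side; the other side is a flat genus-one surface with geodesic boundary and no cone points, which violates Gauss--Bonnet ($\sum(\theta_i-2\pi)=-2\pi\chi=2\pi\neq 0$). Concretely, the regular octagon and the L-shaped surfaces all have two-cylinder horizontal decompositions in which both cores are non-separating. In $\mathcal{H}(1,1)$ a separating cylinder \emph{can} occur, but again need not: in a three-cylinder decomposition the cores can satisfy a nontrivial relation such as $[\gamma_1]+[\gamma_2]=[\gamma_3]$ with all three nonzero. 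So the step ``push the core of a separating cylinder to its boundary to get the slit'' cannot be the engine of the proof. The slit in McMullen's argument is not a cylinder core; it is assembled from homologous \emph{boundary} saddle connections of the cylinder decomposition, and the combinatorial work is precisely to show that such a separating pair (or, in $\mathcal{H}(2)$, a single separating saddle-connection loop through the zero) always exists in a periodic direction. That analysis is what you would have to supply, and it is the real content of the theorem. The infinitude then does follow from density of (genuinely) periodic directions, so your first mechanism for producing infinitely many decompositions is fine once existence is repaired; the shear/Dehn-twist variant you describe is not needed and, as stated, does not keep $(X,\omega)$ fixed while moving a slit.
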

If $(X,\omega)$ is a genus 2 surface and $\gamma$ is a curve so that $(X,\omega)$ is a connect sum of two tori along $\gamma$ we call $\gamma$ a \emph{slit}. The slit defines homotopically non-trivial curve of zero cohomology class (it separates the surface). Let $\alpha_1,\beta_1,\alpha_2,\alpha_2$ be homotopy classes of curves such that $\langle  \alpha_1,\beta_1,\alpha_2,\alpha_2   \rangle = H_1(X,\mathbb{Z})$. Since $\pi_1 (X) = \langle \alpha_1,\beta_1,\alpha_2,\alpha_2 | [\alpha_1,\beta_1][\alpha_2,\beta_2] \rangle$, where $[a,b]$ is the commutator of $a$ and $b$, let $\sigma = [\alpha_1,\beta_1][\alpha_2,\beta_2]$, that is, the non-trivial homotopy class which separates the surface into two slitted tori, i.e., the class of the curve going around the slit. We can choose the $\alpha_i,\beta_i$ such that they form a symplectic basis.

For any surface $(X,\omega)\in\mathcal{A}_2^{(1)}$, we will denote by $\delta^s(\omega)$ the length of the shortest homotopically nontrivial \emph{separating} closed curve on $(X,\omega)$ and by $\delta^{\not s}(\omega)$ the length of the shortest homotopically nontrivial non-separating closed curve on $(X,\omega)$. Let $\delta_t^s(\omega) = \delta^s(g_t \omega)$ and $\delta_t^{\not s}(\omega) = \delta^{\not s}(g_t \omega)$ and note that $\delta_t(\omega) \in \{\delta_t^{\not s}(\omega),\delta_t^s(\omega)\}$, where we always measure lengths with respect to the flat metric on $g_t(X,\omega)$.

Note that if $\delta_t(\omega) = \delta_t^{\not s}(\omega)$ then flat geodesic of length $\delta_t(\omega)$ is a member of a cylinder $A$ foliated by trajectories parallel to this geodesic. The modulus of this cylinder, denoted $\mathrm{Mod}(A)$, is $h/\delta_t(\omega)$, where $h$ is the height of the cylinder. Note that in this case, by area considerations, we have that $\mathrm{Mod}(A)\leq     (\delta_t(\omega))^{-2}$.

In the case when $\delta_t(\omega) = \delta_t^{s}(\omega)$, the geodesic with length $\delta_t(\omega)$ is not contained in a foliated cylinder, as this would otherwise mean that the geodesic was non-separating since core curves of cylinders have non-trivial homology classes. In this case, the geodesic is either the boundary curve or the core curve of an annulus $A$, called an \emph{expanding annulus}. We will also need a bound for the conformal modulus of this annulus. 

In order to use Kerckhoff's formula (\ref{eqn:kerckhoff}) we will bound the extremal length of curves by the moduli of cylinders and annuli which contain them. The following can be gathered from \cite[\S 5]{CRS:lines} or \cite[Lemma 3.6]{R} and provides the crucial bounds for the extremal lengths in terms of moduli of annuli and cylinders.
\begin{thm}
Let $(X,\omega)$ be a flat surface of unit area and denote by $X_t$ the Riemann surface on which $g_t \omega$ is holomorphic. There exist constants $K_s,K_{\not s}>0$ which depend only on the topology of the surface such that
$$\frac{1}{\min_{\gamma\in \mathcal{Q}} \mathrm{Ext}_{X_t}(\gamma)} \leq \max\{  - K_s \log (\delta_t^s(\omega))+K_s ,  K_{\not s}\cdot(\delta_t^{\not s})^{-2}+K_{\not s} \}.$$
\end{thm}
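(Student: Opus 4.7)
The plan is to split on whether a minimizer $\gamma^*$ of $\mathrm{Ext}_{X_t}$ on $\mathcal{Q}$ is separating or non-separating; in genus two every essential simple closed curve falls into one of these two classes. The main input will be the Rafi/CRS-type comparison $\mathrm{Ext}_{X_t}(\gamma) \asymp 1/\mathrm{Mod}(A(\gamma))$, where $A(\gamma)$ is the maximal embedded flat annulus around $\gamma$ in the metric on $g_t\omega$ and the implied constants depend only on the stratum.

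For non-separating $\gamma^*$ the elementary bound already suffices: taking $|\omega|$ as a conformal metric in the definition of extremal length gives
$$
\mathrm{Ext}_{X_t}(\gamma^*) \geq (\delta_t^{\not s}(\omega))^2,
$$
so $1/\mathrm{Ext}_{X_t}(\gamma^*) \leq (\delta_t^{\not s}(\omega))^{-2}$, which fits inside the non-separating term on the right-hand side (any $K_{\not s} \geq 1$ works).

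For separating $\gamma^*$ the flat-metric bound only gives $(\delta_t^s(\omega))^{-2}$, which is far worse than the target $-K_s \log \delta_t^s(\omega) + K_s$. The logarithmic improvement will come from the observation that separating curves are null-homologous and hence have vanishing $\omega$-period, while the core of any flat cylinder has non-zero period (equal to its holonomy vector). Consequently $\gamma^*$ cannot be the core of a flat cylinder on $g_t\omega$, so the maximal annulus $A(\gamma^*)$ must be an expanding annulus. The classical estimate that bounds the modulus of an expanding annulus of inner circumference $\ell$ on a unit-area flat surface by $K_1(-\log \ell) + K_2$ for topological constants $K_1, K_2$, combined with $\ell \geq \delta_t^s(\omega)$ and the Rafi/CRS comparison, yields $1/\mathrm{Ext}_{X_t}(\gamma^*) \leq K_s(-\log \delta_t^s(\omega) + 1)$ for a suitable $K_s$.

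The main obstacle is really assembling the two cited inputs with topology-only constants: the modulus estimate for expanding annuli on unit-area flat surfaces, and the Rafi/CRS comparison between extremal length and maximal annulus modulus. Both appear in \cite[\S 5]{CRS:lines} and \cite[Lemma 3.6]{R}; once they are in hand, the argument reduces to splitting into the two cases, applying the appropriate bound, and choosing $K_s, K_{\not s}$ large enough to absorb multiplicative constants.
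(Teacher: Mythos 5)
Your argument is correct and follows essentially the same route as the paper, which states this theorem as a consequence of \cite[\S 5]{CRS:lines} or \cite[Lemma 3.6]{R} rather than giving a self-contained proof, and whose preceding paragraphs set up exactly the dichotomy you invoke (separating curve $\Rightarrow$ null-homologous $\Rightarrow$ no flat cylinder $\Rightarrow$ expanding annulus with logarithmic modulus bound; non-separating $\Rightarrow$ flat cylinder with $\mathrm{Mod} \lesssim \ell^{-2}$). Your direct use of $|g_t\omega|$ as a test metric to get $\mathrm{Ext}_{X_t}(\gamma^*) \geq \ell(\gamma^*)^2$ is a clean shortcut in the non-separating case, but both versions bottom out in the same Rafi/CRS comparison of extremal length with annular moduli.
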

This gives a bound for Kerckhoff's distance (\ref{eqn:kerckhoff}).
\begin{cor}
\label{cor:KerckhoffDist}
Let $(X,\omega)\in \mathcal{A}_2^{(1)}$ and denote by $X_t$ the Riemann surface on which $g_t\omega$ is holomorphic. Then there exists a constant $K_2$ which depends only on the topology of the surface such that
$$\mbox{dist}(X,X_t) \leq   \max\left\{ \frac{1}{2}\log( -\log (\delta^s_t(\omega))), - \log (\delta_t^{\not s}(\omega))\right\} + K_2.$$
\end{cor}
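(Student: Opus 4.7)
The plan is to combine Kerckhoff's formula with the extremal-length estimate from the preceding theorem and then simplify the logarithm on the right-hand side.

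By Kerckhoff's theorem (Theorem \ref{thm:kerckhoff}),
\[
\mathrm{dist}(X, X_t) = \sup_{\alpha \in \mathcal{Q}} \tfrac{1}{2}\log\frac{\mathrm{Ext}_X(\alpha)}{\mathrm{Ext}_{X_t}(\alpha)}.
\]
The first step is to argue that, up to an additive constant depending only on $X$ and the topology, this supremum is controlled by the reciprocal of the shortest extremal length on $X_t$:
\[
\mathrm{dist}(X, X_t) \leq \tfrac{1}{2}\log\frac{1}{\min_{\alpha}\mathrm{Ext}_{X_t}(\alpha)} + C.
\]
The intuition is that a near-maximizer of the extremal-length ratio is the curve $\alpha^{*}$ that minimizes $\mathrm{Ext}_{X_{t}}$, and for such an $\alpha^{*}$ the quantity $\mathrm{Ext}_X(\alpha^*)$ remains uniformly bounded as $t$ varies, because $X$ itself sits in a fixed compact subset of moduli space (for instance, in some $K_g(\varepsilon)$ from \eqref{eqn:Mahler}).

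The second step is to substitute the bound from the preceding theorem into this inequality, obtaining
\[
\mathrm{dist}(X, X_t) \leq \tfrac{1}{2}\log\max\bigl\{-K_s\log\delta_t^s(\omega)+K_s, \; K_{\not s}(\delta_t^{\not s}(\omega))^{-2}+K_{\not s}\bigr\} + C.
\]
Using $\log\max(a,b) = \max(\log a, \log b)$ and then simplifying each branch separately: the non-separating branch gives $\tfrac{1}{2}\log\bigl(K_{\not s}(\delta_t^{\not s})^{-2}+K_{\not s}\bigr) = -\log\delta_t^{\not s}(\omega) + O(1)$, while the separating branch gives $\tfrac{1}{2}\log(-K_s\log\delta_t^s+K_s) = \tfrac{1}{2}\log(-\log\delta_t^s(\omega)) + O(1)$ for $\delta_t^s(\omega)$ sufficiently small. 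Collecting all the additive constants into a single $K_2$ depending only on the topology yields the stated inequality.

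The main obstacle is the first step: rigorously justifying that the Kerckhoff supremum is, up to a topological constant, bounded by the reciprocal of the shortest extremal length. This amounts to a description of Teichm\"uller distance near the thin part of moduli space and requires controlling $\mathrm{Ext}_X(\alpha^{*})$ for the minimizing curve independently of $t$; once that bound is in hand, the remainder of the argument reduces to elementary logarithmic manipulations.
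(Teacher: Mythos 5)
Your outline --- Kerckhoff's theorem, then the preceding extremal-length estimate, then logarithmic simplification --- is exactly what the paper's one-line attribution intends, and your step 2 is a routine computation (modulo absorbing constants and handling the regime where $\delta_t^s$ is not small, which the additive $K_2$ takes care of).

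The problem is the justification you offer for step 1. You assert that for the curve $\alpha^{*}$ minimizing $\mathrm{Ext}_{X_t}$, the quantity $\mathrm{Ext}_X(\alpha^{*})$ stays uniformly bounded as $t$ varies because $X$ lies in a fixed compact subset of moduli space. Compactness of $K_g(\varepsilon)$ gives a uniform \emph{lower} bound on $\min_\alpha\mathrm{Ext}_X(\alpha)$; it does not give an upper bound on $\mathrm{Ext}_X(\alpha)$ uniformly over all topological types $\alpha$. As $t$ grows, the shortest curve $\alpha^{*}(t)$ on $X_t$ will typically be an increasingly complicated simple closed curve, and $\mathrm{Ext}_X(\alpha^{*}(t))$ on the fixed surface $X$ can be arbitrarily large. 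Moreover, even granting such a bound, there is no reason $\alpha^{*}$ realizes (or nearly realizes) the Kerckhoff supremum: a different curve $\beta$ with $\mathrm{Ext}_{X_t}(\beta)$ larger than the minimum but $\mathrm{Ext}_X(\beta)$ much larger could give a bigger ratio. So the heuristic you sketch does not actually deliver the inequality $\mathrm{dist}(X,X_t)\leq\tfrac{1}{2}\log\bigl(\min_\alpha\mathrm{Ext}_{X_t}(\alpha)\bigr)^{-1}+C$.

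That inequality is nevertheless what is needed, and it is the content of the argument behind Masur's Proposition~1.1 in \cite{Masur:loglaw}. Note that one cannot simply cite the statement (\ref{eqn:distanceBound}) as a black box: that version is phrased in terms of the flat systole $\delta_t$ and would only give the weaker $-\log\delta_t^s$ in the separating branch, not the improved $\tfrac{1}{2}\log(-\log\delta_t^s)$ that makes the corollary useful. One needs the extremal-length form, where the Kerckhoff ratio is controlled for \emph{all} $\alpha$ simultaneously rather than by bounding the extremal length of the single minimizer on $X$. Once that input is supplied, your step 2 finishes the corollary. You correctly flagged step 1 as the main obstacle; the route you propose to get past it, however, would not go through.
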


\section{Logarithm laws and non-ergodicity in genus 2}
\label{sec:example}
In this section we construct the example which yields Main Theorem \ref{thm:negative}. Recall that if $\alpha \in(0,1)\setminus \mathbb{Q}$, it has a unique continued fraction expansion $\alpha=[a_1,...]$. Its best convergents $\frac{p_k}{q_k}$ are given by $p_0=0$, $p_1=1=q_0$ and inductively $q_{k+1}=a_{k+1}q_k+q_{k-1}$ and $p_{k+1}=a_{k+1}p_k+p_{k-1}$.
Let $\alpha\in [0,1)$ have continued fraction expansion $a_i=\lceil (i+10) (\log(i+1))^2\rceil $ for all $i$. Let $\|n\alpha\|=d(n\alpha,\mathbb{Z})$ and $R_\alpha(x)=x+\alpha-\lfloor x+\alpha \rfloor$ the rotation by $\alpha$. Further recall,
\begin{lem}\label{lem:sep}$\underset{0< i\leq q_{k}-1}{\min}\|i\alpha\|=\|q_{k-1}\alpha\|.$
\end{lem}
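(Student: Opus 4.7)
My plan is to deduce this as a short consequence of the classical ``best approximation'' property of continued-fraction convergents. The two key inputs are the identity $p_{k-1}q_k - p_k q_{k-1} = (-1)^k$, which implies that the vectors $(q_{k-1}, p_{k-1})$ and $(q_k, p_k)$ form a $\mathbb{Z}$-basis of $\mathbb{Z}^2$, together with the standard fact (see Khinchin, \S 4) that consecutive convergents approximate $\alpha$ from opposite sides, so that $\eta_{k-1} := q_{k-1}\alpha - p_{k-1}$ and $\eta_k := q_k\alpha - p_k$ have opposite signs.

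To prove the lemma I argue by contradiction. Suppose $|i\alpha - j| < |\eta_{k-1}|$ for some $j \in \mathbb{Z}$ and some $i$ with $0 < i < q_k$. Decomposing $(i, j) = a(q_{k-1}, p_{k-1}) + b(q_k, p_k)$ via the basis above gives unique integers $a, b$ with $i\alpha - j = a\eta_{k-1} + b\eta_k$. A short case analysis on $(a,b)$ closes the argument. If $b = 0$, then $|a|\,|\eta_{k-1}| < |\eta_{k-1}|$ forces $a=0$ and hence $i=0$. If $a=0$ and $b\neq 0$, then $i = bq_k$ violates $0 < i < q_k$. If both $a$ and $b$ are nonzero with the same sign, then $|i| = |aq_{k-1} + bq_k| \geq q_{k-1} + q_k > q_k$. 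Finally, if both are nonzero with opposite signs, the opposite-sign property of $\eta_{k-1}, \eta_k$ makes $a\eta_{k-1}$ and $b\eta_k$ have the same sign, so $|i\alpha - j| = |a|\,|\eta_{k-1}| + |b|\,|\eta_k| \geq |\eta_{k-1}|$, again a contradiction. Taking $j$ to be the nearest integer to $i\alpha$ therefore yields $\|i\alpha\| \geq |\eta_{k-1}| = \|q_{k-1}\alpha\|$ for every $0 < i < q_k$, with equality at $i = q_{k-1}$.

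There is no serious obstacle: the sign alternation of $\eta_k$ and the basis property are both standard, and the case analysis is purely arithmetic. The statement is in fact precisely the best-approximation theorem of continued fractions (Khinchin, Theorem 16), so the proof could equally well consist of a single sentence invoking that reference, in the spirit of the proof of Lemma \ref{lemma:good:bound}.
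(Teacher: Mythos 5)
Your proof is correct and complete. The paper itself supplies no proof of this lemma -- it is stated without argument, the tacit understanding being that it is the classical best-approximation theorem for continued-fraction convergents (as you note, Khinchin's Theorem 16; the paper cites Khinchin for the immediately preceding lemma and leaves this one as a stated fact). Your argument via the unimodular basis $(q_{k-1},p_{k-1}),(q_k,p_k)$ of $\mathbb{Z}^2$, the sign-alternation of $\eta_j = q_j\alpha - p_j$, and the four-way case analysis on $(a,b)$ is the standard textbook proof; each case is handled correctly (in particular, the opposite-sign case correctly turns the sign alternation of $\eta_{k-1},\eta_k$ into same-sign summands, giving $|i\alpha - j|\geq|\eta_{k-1}|$), and the closing observation that $\|q_{k-1}\alpha\| = |\eta_{k-1}|$ follows because $p_{k-1}$ achieves the minimum. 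No gaps.
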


Define the lattice given by $\alpha$
$$\Lambda_\alpha := \begin{pmatrix} 1&-\alpha\\0&1\end{pmatrix}\mathbb{Z}^2$$
and let $(E_i,\omega) = (\mathbb{C}/\Lambda_\alpha,dz)$, $i = 1,2$, be two copies of the same flat torus given by the lattice $\Lambda_\alpha$. We will denote by 
$$(\hat{X},\hat{\omega}) = (E_1,dz)\#_\alpha (E_2,dz)$$ 
the genus two translation surface formed by gluing $(E_1,dz)$ to $(E_2,dz)$ along a slit of holonomy $(\sum_{k=1}^\infty 2\|q_{k}\alpha\|,0)$. The next lemma implies that $\sum_{i=1}^\infty 2\|q_i\alpha\|<1$.
\begin{lem} \label{lemma:good:bound}$\frac 1 {q_{k+1}+q_k}<\|q_{k}\alpha\|<\frac 1 {a_{k+1}q_k}.$
\end{lem}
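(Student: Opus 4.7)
The plan is to invoke the classical identity for continued fraction convergents. Writing $\alpha_{k+1} = [a_{k+1}, a_{k+2}, \ldots] \in (a_{k+1}, a_{k+1}+1)$ for the $(k+1)$-st complete quotient of $\alpha$, one has
\[
\alpha = \frac{\alpha_{k+1} p_k + p_{k-1}}{\alpha_{k+1} q_k + q_{k-1}}.
\]
Combined with the standard determinant identity $p_{k-1} q_k - p_k q_{k-1} = (-1)^k$, this yields
\[
q_k \alpha - p_k = \frac{(-1)^k}{\alpha_{k+1} q_k + q_{k-1}}.
\]
I would derive both these facts quickly from the recursions $p_{k+1} = a_{k+1} p_k + p_{k-1}$ and $q_{k+1} = a_{k+1} q_k + q_{k-1}$ in a single paragraph, rather than citing them.

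Next, since $p_k/q_k$ is a best rational approximation to $\alpha$, one has $\|q_k\alpha\| = |q_k \alpha - p_k|$, hence
\[
\|q_k \alpha\| = \frac{1}{\alpha_{k+1} q_k + q_{k-1}}.
\]
Now I use the bracketing $a_{k+1} < \alpha_{k+1} < a_{k+1} + 1$. Substituting the lower bound gives
\[
\alpha_{k+1} q_k + q_{k-1} > a_{k+1} q_k + q_{k-1} = q_{k+1} > a_{k+1} q_k,
\]
so $\|q_k \alpha\| < 1/(a_{k+1} q_k)$, which is the right-hand inequality. Substituting the upper bound gives
\[
\alpha_{k+1} q_k + q_{k-1} < (a_{k+1} + 1) q_k + q_{k-1} = q_{k+1} + q_k,
\]
so $\|q_k \alpha\| > 1/(q_{k+1} + q_k)$, which is the left-hand inequality.

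There is no real obstacle here; the only point requiring a tiny bit of care is justifying $\|q_k\alpha\| = |q_k\alpha - p_k|$, which follows because the distance of $q_k\alpha$ to the integer $p_k$ is already less than $1/q_{k+1} \le 1/2$ (assuming $k \ge 1$ and hence $q_{k+1} \ge 2$), so $p_k$ is indeed the nearest integer to $q_k \alpha$. The whole argument is about half a page once the identity for $q_k\alpha - p_k$ is written down.
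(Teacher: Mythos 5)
Your proof is correct. The paper itself gives no argument here; it simply cites Khinchin's book (``See \cite{khinchin}, 4 lines before equation 34''), so your self-contained derivation is a genuine alternative rather than a reconstruction of the paper's proof. Your chain is exactly the standard one: from $\alpha = \frac{\alpha_{k+1}p_k + p_{k-1}}{\alpha_{k+1}q_k + q_{k-1}}$ and $p_{k-1}q_k - p_kq_{k-1} = (-1)^k$ one gets $q_k\alpha - p_k = \frac{(-1)^k}{\alpha_{k+1}q_k + q_{k-1}}$, and the bracketing $a_{k+1} < \alpha_{k+1} < a_{k+1}+1$ (valid since $\alpha$ is irrational, so $\alpha_{k+2} > 1$) yields $a_{k+1}q_k < \alpha_{k+1}q_k + q_{k-1} < q_{k+1} + q_k$, giving both inequalities. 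Your remark that $p_k$ is the nearest integer to $q_k\alpha$ (so $\|q_k\alpha\| = |q_k\alpha - p_k|$) is the only point needing a word of justification, and you handle it correctly. The self-contained version is arguably preferable here since the lemma is used quantitatively downstream; the paper's approach of citing Khinchin is of course also fine and shorter.
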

See \cite{khinchin}, 4 lines before equation 34.

\begin{prop} Consider the following two sets: 
\begin{itemize}
\item The set of period curves of cylinders with holonomy either $(\pm \|q_{k}\alpha\|,q_{k})$ or $(\pm2 \|q_{k}\alpha\|,2q_{k})$ for some $k$. There exists a constant $C$ so that the shortest these curves get, under the Teichm\"uller deformation, is between $\frac 1 C \sqrt{\frac 1 {a_{k+1}+2}}$ and $C\sqrt{\frac 1 {a_{k+1}+2}}$. Their lengths on $g_t\hat{\omega}$ is at least 1 if $t<\log(q_{k})$.
\item The set of slits with holonomy $(\sum_{i=k}^{\infty}2\|q_{i}\alpha\|,\sum_{i=1}^{k-1}2q_{i})$ for some $k$. The shortest such a  curve gets is $\sqrt{q_{k-1}\sum_{i=k}^{\infty}2\|q_{i}\alpha\|}$ which is at least $2\sqrt{\frac {q_{k-1}} {q_{k+1}}}$ for all $t$. Its length on $g_t\hat{\omega}$ is at least $1$ if $t<\log(q_{k-1})$.
\end{itemize}
For all $t$ we have that there exists $\gamma_t$ in the first set and $\zeta_t$ in the second set so that $\delta_t^s<\frac 1 9g_t\zeta_t$ and $\delta^{\not s}_t<\frac 1 9 g_t\gamma_t$. 
\end{prop}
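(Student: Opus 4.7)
The plan is to compute the length of each candidate curve directly from its holonomy vector under the Teichm\"uller deformation, apply Lemma \ref{lemma:good:bound} and Lemma \ref{lem:sep} to simplify, and then argue in the last bullet that the listed families exhaust the short separating and non-separating curves up to the claimed factor of $9$. Throughout, the elementary identity that a closed geodesic with holonomy $(v,h)$ on $\hat\omega$ has length $\sqrt{e^{2t}v^2+e^{-2t}h^2}$ on $g_t\hat\omega$, minimized to $\sqrt{2|vh|}$ at $e^{2t}=h/|v|$, will do most of the work. The ``length at least $1$'' thresholds all follow from the observation that if $t$ is below the stated log, then the vertical part alone, after squeezing by $g_t$, exceeds $1$.

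For the first bullet, plug $(v,h)=(\pm\|q_k\alpha\|,q_k)$ into the formula: the minimum is $\sqrt{2q_k\|q_k\alpha\|}$, and Lemma \ref{lemma:good:bound} pins this between $\sqrt{2q_k/(q_{k+1}+q_k)}\sim\sqrt{2/(a_{k+1}+2)}$ and $\sqrt{2/a_{k+1}}$, giving the proportionality to $\sqrt{1/(a_{k+1}+2)}$. The $(\pm2\|q_k\alpha\|,2q_k)$ case is identical up to a factor of $2$. For $t<\log q_k$, $e^{-t}q_k>1$ already forces length $>1$. For the second bullet, apply the same identity to $(V_k,H_k)=(\sum_{i\geq k}2\|q_i\alpha\|,\sum_{i<k}2q_i)$; since Lemma \ref{lemma:good:bound} yields $V_k\geq 2\|q_k\alpha\|\geq 2/(q_{k+1}+q_k)$ and $H_k\geq 2q_{k-1}$, the minimum $\sqrt{2V_kH_k}$ compares to $\sqrt{q_{k-1}\cdot 2V_k}\geq 2\sqrt{q_{k-1}/q_{k+1}}$ (the fast growth of $a_i=\lceil(i{+}10)(\log i)^2\rceil$ makes $\sum_{i<k}q_i$ comparable to $q_{k-1}$). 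Again, for $t<\log q_{k-1}$, $e^{-t}H_k\geq 2e^{-t}q_{k-1}>2$ gives length $>1$ immediately.

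For the final bullet I would use the homological structure of the genus-two connected sum. The slit is null-homologous, so $H_1(\hat X,\mathbb Z)$ splits as $H_1(E_1)\oplus H_1(E_2)$, and a non-separating homology class has $\omega$-holonomy of the form $(a-b\alpha,b)$ with $b\neq 0$. Classical best-approximation theory (applied via Lemma \ref{lem:sep}) forces $|a-b\alpha|$ to be comparable to $\|q_k\alpha\|$ for the appropriate $k$, so the systole $\delta_t^{\not s}$ is realized up to a bounded factor by one of the cylinder cores in the first set. For separating curves, every homotopically nontrivial separating curve is homotopic to the loop around the slit, whose length is within a factor of $2$ of the shortest straight arc between the two cone points in the separating homotopy class; the different such arcs are obtained by adding torus geodesics to the original slit, yielding exactly the family in the second set. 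For each $t$ one chooses $k=k(t)$ so that $e^{2t}$ is comparable to $H_k/V_k$, and that choice of $\zeta_t$ (and analogously $\gamma_t$) is then within the bounded factor of the true systole.

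The main obstacle is the last bullet: one must verify that the two candidate families really exhaust all short curves, not just provide some short curves. Non-separating classes reduce to the best-approximation theorem and are routine. The delicate part is the separating case, where different slit representatives can have very different holonomies, and one must show that wrapping around $(\pm\|q_i\alpha\|,q_i)$-geodesics suffices to reach a near-optimal representative at \emph{every} time $t$, not just at the special $t$ where one specific slit is at its minimum. Tracking the comparison constants carefully through both the ``leading term vs.\ tail'' estimate for the sums and the slit-arc vs.\ enclosing-loop comparison on the cone-pointed flat surface is what yields the uniform constant $1/9$.
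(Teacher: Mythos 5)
Your handling of the first two bullets is correct and matches the paper: the length formula $\sqrt{e^{2t}v^2+e^{-2t}h^2}$ with minimum $\sqrt{2|vh|}$, the application of Lemma \ref{lemma:good:bound}, and the observation that $e^{-t}\cdot(\text{vertical holonomy})>1$ below the stated logarithmic threshold are exactly the computations needed. For the non-separating half of the final bullet your plan is also essentially the paper's: the paper classifies saddle connections by whether they connect $(0,0)$ to a point of $\Lambda_\alpha$ or of $\Lambda_\alpha+(\sum 2\|q_i\alpha\|,0)$, and for the first type uses best approximation (Lemma \ref{lem:sep} plus a separation sublemma for rotation orbits) to conclude short curves must have vertical holonomy a multiple $b q_k$ with $b\le a_{k+1}$, after which the $1/9$ comparison is a direct check.

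The genuine gap is in your treatment of separating curves. Your assertion that ``every homotopically nontrivial separating curve is homotopic to the loop around the slit'' is false: on a genus-two surface there are infinitely many distinct isotopy classes of separating simple closed curves (McMullen's theorem, quoted in \S\ref{sec:geometry}, produces infinitely many inequivalent slit decompositions). What the paper actually does is reduce the question to saddle connections joining the two distinct cone points, i.e.\ vectors from $(0,0)$ into $\Lambda_\alpha+(\sum 2\|q_i\alpha\|,0)$, identifies the specific ones with holonomy $(\sum_{i\ge k}2\|q_i\alpha\|,\sum_{j<k}2q_j)$ (these are the arcs whose doubling under the exchange involution gives the separating geodesics $\zeta_k$), and then argues that no other saddle connection in this translate of the lattice can beat $\zeta_k$ by more than a factor of $9$ at any time $t$. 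Your proposal gestures at ``adding torus geodesics to the original slit,'' which is morally the same family, but you never pin down that these are all the candidates nor carry out the uniform comparison across $t$ that produces the constant $1/9$. You do flag this honestly as ``the main obstacle,'' so the diagnosis is right, but the argument is not supplied; without the explicit lattice classification of saddle connections between the two cone points, the exhaustion claim is unsupported and the incorrect homotopy statement would, if taken literally, give a wrong reason for it to hold.
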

\begin{proof}
Every saddle connection on $(\hat{X} ,\hat{\omega})$ has the form of a vector connecting points in $\Lambda_\alpha \cup \left( (\sum_{i=1}^{\infty}2\|q_{i}\alpha\|,0)+\Lambda_\alpha \right)$. 

We first wish to consider primitive vectors in this set. We may assume the vector starts from $(0,0)$, has vertical component of its holonomy $n$ for some $n \in \mathbb{N}$. By our assumption that our simple closed curve gets small under $g_t$ we further restrict our attention to when such a saddle connection has that the horizontal component of its holonomy is less than $\frac{2c^2}{n}$. 
There are two possibilities: the vector connects $(0,0)$ to $(x,y)\in \Lambda_\alpha$ or
 it connects $(0,0)$ to $(x,y) \in\Lambda_\alpha +(\sum_{i=1}^{\infty}2\|q_{i}\alpha\|,0)$. We consider the first option. Observe that the closest element of $\Lambda_\alpha$  to $(0,n)$ is $(\pm \|n\alpha\|,n)$ and so the horizontal holonomy of the shortest curve with vertical holonomy $n$ has horizontal holonomy $d(R^n_\alpha(0),0)$. 
  
\textbf{Sublemma:} For all $k,j,p$ there exists at most 1 element of $\{j+i\}_{i=1}^{q_k}$ so that $d(R^{j+i}_\alpha(0),p)<\frac 1 {8q_k}$. 

\begin{proof}By Lemma \ref{lem:sep} we have $\{R^{i+j}_\alpha(0)\}_{i=1}^{q_k}$ is $\|q_{k-1}\alpha\|$ separated. By Lemma \ref{lemma:good:bound} this is at least $\frac 1 {4q_k}$. If a set of points are $\frac 1 {4q_k}$ separated, at most one element of the set can be within $\frac 1 {8q_k}$ of a given point.
\end{proof}

It is straightforward that $\|bq_k\alpha\| <  \|i\alpha\|$ for all $0<i<q_{k+1}$ so that $i \notin \{\ell q_k\}_{\ell=1}^{a_{k+1}}$ and $b\leq a_{k+1}$. So by the sublemma we have that any saddle connection that can be made short has $n=b q_k$ for some $k$ and $b<a_{k+1}$ (indeed any two $j$ so that $d(R^j_\alpha(0),0)<\frac{c}{j}$ are at least $q_{{k}}$ separated if $c<\frac 1 4$). These give closed curves on the torus, so either this curve is a closed curve on $\hat{\omega}$ or twice it is a closed curve on $\hat{\omega}$. It is obvious that
$$\sqrt{(b \|q_k\alpha\|)^2+(bq_k)^2}>\frac 1 9 \sqrt{(2\|q_k\alpha\|)^2+(2q_k)^2}$$
for all $1\leq b\leq a_{k+1}$. 

We now consider the other case. First observe that for all $k$ we have
$$d\left(\sum_{i=1}^\infty2 \|q_i\alpha\|,R^{\sum_{i=1}^k 2q_i}_\alpha(0)\right)=\sum_{i=k+1}^\infty 2 \|q_i\alpha\|.$$ 
Observe that there is a saddle connection with holonomy  $(\sum_{i\geq k}2\|q_{i}\alpha\|,\sum_{j=1}^{k-1}2 q_{j})$. Notice that it connects two different cone points. Consider the union of this saddle connection and its image under the involution $+1$ that interchanges the two tori. The union of these two gives a simple closed geodesic that disconnects the surface. See for example  \cite[Section 2.3]{CMW:limits}. Call this curve $\zeta_k$. 

Now if $\xi$ is a saddle connection connecting $(0,0)$ to $\Lambda_\alpha +(\sum_{i=1}^{\infty}2\|q_{i}\alpha\|,0)$ with vertical holonomy between $q_k,q_{k+1}$ we have that for all $t$, $g_t\xi>\frac 1 9 g_t\zeta_k$. To see this first notice that because the vertical holonomy of $\zeta_k<9q_k$ if the holonomy of $g_t\xi\leq \frac 1 9g_t \zeta_k$ then the horizontal holonomy of $\xi$ must be smaller than $\zeta_k$'s. It is straightforward to check that no saddle connections we are considering (other than the two equal length saddle connections that make up $\zeta_k$) have this property. 
\end{proof}

\begin{cor}
\label{cor:loglaw}
Let $(\hat{X},\hat{\omega}) = (\mathbb{C}/\Lambda_\alpha,dz)\#_\alpha (\mathbb{C}/\Lambda_\alpha, dz)$ be the flat surface of genus two constructed as above. For our choice of $\alpha$ we have that 
$$\limsup_{t \to \infty} \frac{\max\left\{\frac{1}{2}\log(-\log(\delta^s_t(\hat{\omega}))),-\log(\delta_t^{\not s}(\hat{\omega}))  \right\}}{\log t}\leq \frac 1 2.$$
\end{cor}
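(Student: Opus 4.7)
The plan is to prove separately the two limsup bounds contained in the statement: (i) that $\limsup_{t\to\infty}\frac{-\log\delta^{\not s}_t(\hat\omega)}{\log t}\leq \tfrac 1 2$, and (ii) that $\limsup_{t\to\infty}\frac{\tfrac12\log(-\log\delta^{s}_t(\hat\omega))}{\log t}\leq \tfrac 1 2$. By the preceding Proposition, each of these systoles is comparable, up to a universal factor of $9$, to $\min_k |g_t\gamma_k|$ and $\min_k |g_t\zeta_k|$ respectively, where $\gamma_k$ runs over the explicit non-separating candidates (with holonomy $(\pm\|q_k\alpha\|,q_k)$ or $(\pm2\|q_k\alpha\|,2q_k)$) and $\zeta_k$ over the slit candidates (with holonomy $(\sum_{i\geq k}2\|q_i\alpha\|,\sum_{i<k}2q_i)$), so the task reduces to lower bounding these two minima.

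For (i), the function $|g_t\gamma_k|^2 = e^{2t}\|q_k\alpha\|^2+e^{-2t}q_k^2$ is U-shaped in $t$, minimized at $t_k := \tfrac12\log(q_k/\|q_k\alpha\|)$ with value $\sqrt{2q_k\|q_k\alpha\|}\asymp 1/\sqrt{a_{k+1}}$ by Lemma \ref{lemma:good:bound}, and $t_k\asymp \log q_k$. On each interval $[t_k,t_{k+1}]$ only the two neighbors $\gamma_k,\gamma_{k+1}$ can realize the envelope $\min_j|g_t\gamma_j|$ (the others being exponentially larger); $|g_t\gamma_k|$ grows from its minimum as $t$ moves past $t_k$ while $|g_t\gamma_{k+1}|$ decays toward its minimum at $t_{k+1}$, and they cross at an interior point of value $\asymp q_{k+1}\|q_k\alpha\|\asymp 1$. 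Consequently the supremum of $-\log\min_j|g_t\gamma_j|$ on $[t_k,t_{k+1}]$ is attained at an endpoint, where its value is $\tfrac12\log a_{k+1}+O(1)$ or $\tfrac12\log a_{k+2}+O(1)$.

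Substituting the prescribed continued-fraction data $a_i = \lceil(i+10)(\log i)^2\rceil$, one finds $\log a_{k+1} = \log k+2\log\log k+O(1)$ and $\log q_k = \sum_{i\leq k}\log a_i = k\log k+O(k\log\log k)$, hence $\log t_k\asymp \log\log q_k = \log k+\log\log k+O(1)$. It follows that
\[
\frac{\tfrac12\log a_{k+1}}{\log t_k}=\frac{\log k+2\log\log k+O(1)}{2\log k+2\log\log k+O(1)}\longrightarrow \tfrac 1 2
\]
from above, giving (i). For (ii), the analogous calculation with $x_k=\sum_{i\geq k}2\|q_i\alpha\|$ and $y_k=\sum_{i<k}2q_i$ gives $|g_t\zeta_k|\geq\sqrt{2x_ky_k}\gtrsim 1/\sqrt{a_ka_{k+1}}$, so at the worst $t$ (namely $t\asymp t_{k^*}^{sep}\asymp \log q_{k^*}$) we get $-\log\delta^s_t\lesssim \log a_{k^*+1}\lesssim \log k^*\lesssim\log t$; thus $\tfrac12\log(-\log\delta^s_t)/\log t\lesssim \log\log t/(2\log t)\to 0$, which is strictly stronger than (ii).

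The main obstacle is the envelope argument in (i): the need to control $-\log\delta^{\not s}_t$ at every $t$, not just at the special times $t_k$, and to check that interior $t\in(t_k,t_{k+1})$ cannot give a worse ratio than the endpoints. The U-shape / crossing analysis together with the rapid growth of the $a_i$ handles this; once (i) is in hand, (ii) follows comfortably from the much cruder estimates available for slits.
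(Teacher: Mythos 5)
Your proof is correct and takes essentially the same route as the paper's: reduce via the explicit candidate curves $\gamma_k$ and $\zeta_k$ furnished by the preceding proposition, then evaluate the asymptotics dictated by $a_i\asymp i(\log i)^2$, $\log q_k\asymp k\log k$, $\log\log q_k\asymp\log k$. Your envelope analysis on $[t_k,t_{k+1}]$ is just a more explicit version of the one-line reduction the paper makes to a $\limsup$ over $k$ of $\max\{\log\sqrt{a_k},\ \log\log(q_{k+1}/q_{k-1})\}/\log\log q_{k-1}$, so there is no substantive difference in method.
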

\begin{proof} By the previous proposition, it suffices to show that 
$$\limsup_{k \to \infty} \frac{\max\left\{\log(\sqrt{a_{k}}),\log(\log(\frac{q_{k+1}}{q_{k-1}}))\right\}}{\log \log q_{n_{k-1}}} \leq \frac{1}{2}.$$
  Since
  \begin{equation*}
\begin{split}
  a_k&=\left\lceil (k+10) \log(k+1)^2\right\rceil, \\
  \frac{q_{k+1}}{q_{k-1}}&<10\left(\left\lceil (k+10)\log(k+1)^2(k+11)\log(k+2)^2\right\rceil\right),
  \end{split}
    \end{equation*}
and for all large enough $k$ we have $q_k>2^k$, the corollary follows.
\end{proof}

\begin{lem} 
\label{lem:nonerg}
The vertical flow on $(\hat{X},\hat{\omega})$ is not ergodic.
\end{lem}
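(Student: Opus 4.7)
My plan is to follow the classical Katok--Stepin / Sataev / Veech slit-torus construction by recognizing the vertical flow on $\hat X$ as a suspension of a $\mathbb{Z}/2$-valued skew product over a circle rotation, and then exhibiting an explicit measurable coboundary that forces non-ergodicity. Cutting $\hat X$ along the slit (which has purely horizontal holonomy) separates it into $E_1 \sqcup E_2$; arranging the slit at matching heights on each torus, the common horizontal transversal $\beta_i = \{y=0\}\subset E_i$ has uniform first-return time $1$ under the vertical flow, and the first-return map takes the form
\begin{equation*}
T(x,i)=(R_\alpha x,\; i \oplus \chi_I(x)), \qquad (x,i)\in \mathbb{R}/\mathbb{Z} \times \mathbb{Z}/2,
\end{equation*}
where $I\subset \mathbb{R}/\mathbb{Z}$ is the slit interval of length $L=\sum_k 2\|q_k\alpha\|$. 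Non-ergodicity of the vertical flow on $\hat X$ is equivalent to non-ergodicity of $T$, which by standard skew-product theory over an ergodic rotation is equivalent to $\chi_I$ being a measurable $\mathbb{Z}/2$-coboundary over $R_\alpha$: $\chi_I = \psi\circ R_\alpha \oplus \psi$ for some measurable $\psi \colon \mathbb{R}/\mathbb{Z}\to \mathbb{Z}/2$.

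The slit length $L = \sum_k 2\|q_k\alpha\|$ is tailored to this coboundary problem. For any interval $J\subset \mathbb{R}/\mathbb{Z}$ of length exceeding $\|q_k\alpha\|$, the symmetric difference $J\triangle R_\alpha^{-q_k}(J)$ is a union of two thin strips near the endpoints of $J$, of total length exactly $2\|q_k\alpha\|$ (since $R_\alpha^{q_k}$ is a rotation by an amount of absolute value $\|q_k\alpha\|$). I would arrange intervals $J_k$ so that the slit $I$ is the disjoint union of these symmetric differences, so that $\chi_I = \bigoplus_k (\chi_{J_k}\oplus \chi_{J_k}\circ R_\alpha^{q_k})$. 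Each summand telescopes to an honest $R_\alpha$-coboundary via the identity
\begin{equation*}
\chi_{J_k}\circ R_\alpha^{q_k}\oplus \chi_{J_k} = g_k\circ R_\alpha \oplus g_k, \qquad g_k := \bigoplus_{j=0}^{q_k-1}\chi_{J_k}\circ R_\alpha^j,
\end{equation*}
so the candidate transfer function is $\psi := \bigoplus_k g_k$.

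The key convergence estimate: the support of $g_k$ has Lebesgue measure at most $q_k |J_k|$, and choosing $|J_k|$ of order $\|q_k\alpha\|$ gives $q_k|J_k|\lesssim q_k\|q_k\alpha\|\lesssim 1/a_{k+1}$ by Lemma \ref{lemma:good:bound}. The specific choice $a_k = \lceil(k+10)\log(k)^2\rceil$ makes $\sum_k 1/a_k$ convergent, so by Borel--Cantelli almost every $x\in \mathbb{R}/\mathbb{Z}$ lies in only finitely many $\mathrm{supp}(g_k)$ and $\psi(x)$ is well defined. The level set $\{(x,i):i\equiv \psi(x) \pmod 2\}$ is then a $T$-invariant proper subset of measure $1/2$, whose suspension gives a proper vertical-flow-invariant subset of $\hat X$, yielding non-ergodicity.

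The main obstacle is the combinatorial arrangement of the intervals $J_k$ so that the symmetric differences $J_k\triangle R_\alpha^{-q_k}(J_k)$ tile the slit $I$ exactly and disjointly. This is the technical heart of the construction and exploits the Ostrowski-type partition associated with the convergents $\{q_k\}$ of $\alpha$; it is precisely where the specific form of the slit holonomy $L=\sum 2\|q_k\alpha\|$ is essential rather than merely convenient, mirroring the original slit-torus examples of \cite{kat:erg}, \cite{sat:erg}, and \cite{veech:strict}.
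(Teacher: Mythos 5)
Your argument is sound, but you should know that the paper dispatches this lemma in a few lines by quoting a known criterion: it cites Veech \cite[Theorem 3]{veech:strict} and Keynes--Newton \cite[Lemma 4]{KeynesNewton}, which reduce non-ergodicity of the slit-torus flow to the arithmetic condition $\sum_k q_k\|q_k\alpha\|<\infty$, and then verifies this condition directly from Lemma~\ref{lemma:good:bound} together with the choice $a_k=\lceil(k+10)(\log k)^2\rceil$. What you have written is, in effect, a self-contained proof of that cited criterion for this particular slit: you identify the cross-section map as a $\mathbb{Z}/2$-skew product over $R_\alpha$, reduce non-ergodicity to $\chi_I$ being a measurable $\mathbb{Z}/2$-coboundary, and construct a transfer function $\psi=\bigoplus_k g_k$ by telescoping along return times $q_k$ and closing with Borel--Cantelli, driven by exactly the same series $\sum_k q_k\|q_k\alpha\|$. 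So the two routes rest on the same arithmetic input; the paper buys brevity by citation, while yours buys transparency into \emph{why} the summability condition is the right one.

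The one step you flag as incomplete --- tiling $I$ exactly and disjointly by the sets $J_k\triangle R_\alpha^{-q_k}J_k$ --- does go through, though it is a concentric nesting rather than an Ostrowski partition. Writing $\epsilon_k:=\|q_k\alpha\|$ and $L=\sum_k 2\epsilon_k$, take $J_k$ to be the interval between $\sum_{i<k}\epsilon_i$ and $L-\sum_{i<k}\epsilon_i$, trimmed by $\epsilon_k$ from its right or left end according to whether $R_\alpha^{-q_k}$ translates (locally) by $+\epsilon_k$ or $-\epsilon_k$, i.e.\ according to the sign of $q_k\alpha-p_k$. Then $J_k\triangle R_\alpha^{-q_k}J_k$ is exactly the $k$th pair of end strips
\begin{equation*}
\left[\textstyle\sum_{i<k}\epsilon_i,\ \sum_{i\le k}\epsilon_i\right)\ \cup\ \left(L-\textstyle\sum_{i\le k}\epsilon_i,\ L-\sum_{i<k}\epsilon_i\right],
\end{equation*}
and these tile $[0,L)$ disjointly as $k$ ranges over $\mathbb{N}$; no wrap-around occurs since $L<1$ and the trimming keeps each translate inside $[0,L]$. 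Moreover $|J_k|=\epsilon_k+2\sum_{i>k}\epsilon_i\le 2\epsilon_k$ by the geometric decay of $\epsilon_i$ (Lemma~\ref{lemma:good:bound} gives $\epsilon_{i+1}/\epsilon_i<2/a_{i+2}$), so your bound $|\operatorname{supp}(g_k)|\le q_k|J_k|\lesssim q_k\|q_k\alpha\|\lesssim 1/a_{k+1}$ holds, and $\sum_k 1/a_{k+1}<\infty$ for this $a_k$. With that detail supplied, the invariant set $\{(x,i):i\equiv\psi(x)\}$ of measure $\tfrac12$ and its suspension give the desired non-ergodicity.
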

\begin{proof}
By a result of Veech \cite[Theorem 3]{veech:strict} and of Keynes-Newton \cite[Lemma 4]{KeynesNewton}, it suffices to show that $\sum 2q_{n_k}\|q_{n_k}\|<\infty$. By Lemma \ref{lemma:good:bound} this is at most $\sum_{k=1}^\infty \frac 2 {a_{n_k+1}}<\infty$. This follows because we assume $a_{n_k+1}>k\log(k)^2$.
\end{proof}

\begin{proof}[Proof of Main Theorem \ref{thm:negative}]
By Lemma \ref{lem:nonerg}, the vertical flow on the genus two surface $(\hat{X},\hat{\omega})$ constructed by glueing two copies of the torus $(\mathbb{C}/\Lambda_\alpha, dz)$ along a slit of holonomy $(\sum_{k=1}^\infty 2\|q_{n_k}\alpha\|,0)$ is non-ergodic. By Corollaries \ref{cor:KerckhoffDist} and  \ref{cor:loglaw}, we have that the surface $(\hat{X},\hat{\omega})$ satisfies the logarithmic inequality (\ref{eqn:inequality}).
\end{proof}

\bibliographystyle{amsalpha}
\bibliography{biblio}

\end{document}